\definecolor{wine-stain}{rgb}{0.5,0,0}
\newtheorem{theorem}{Theorem}[section]
\newtheorem{corollary}{Corollary}
\newtheorem{lemma}{Lemma}
\newtheorem{proposition}{Proposition}
\theoremstyle{definition}
\newtheorem{example}{Example}
\newtheorem{definition}{Definition}
\newtheorem{assumption}{Assumption}
\newtheorem{def-thm}{Definition-Theorem}
\newtheorem{propdef}{Proposition-Definition}
\newtheorem{upshot}{Upshot}
\newtheorem{remark}{Remark}
\numberwithin{equation}{section}
\numberwithin{definition}{section}
\numberwithin{corollary}{section}
\numberwithin{proposition}{section}
\numberwithin{lemma}{section}
\numberwithin{remark}{section}
\numberwithin{propdef}{section}
\newcommand{\sA}{{\mathcal A}}
\newcommand{\sB}{{\mathcal B}}
\newcommand{\sC}{{\mathcal C}}
\newcommand{\sE}{{\mathcal E}}
\newcommand{\sF}{{\mathcal F}}
\newcommand{\sG}{{\mathcal G}}
\newcommand{\sH}{{\mathcal H}}
\newcommand{\sI}{{\mathcal I}}
\newcommand{\sJ}{{\mathcal J}}
\newcommand{\sK}{{\mathcal K}}
\newcommand{\sL}{{\mathcal L}}
\newcommand{\sM}{{\mathcal M}}
\newcommand{\sO}{{\mathcal O}}
\newcommand{\sP}{{\mathcal P}}
\newcommand{\sR}{{\mathcal R}}
\newcommand{\g}{{\mathfrak g}}
\newcommand{\gp}{\mathfrak{p}}
\newcommand{\gb}{\mathfrak{b}}
\newcommand{\gn}{\mathfrak{n}}
\newcommand{\gc}{\mathfrak{c}}
\newcommand{\gt}{\mathfrak{t}}
\newcommand{\ssC}{{\mathscr C}}
\newcommand{\BC}{\mathbb C}
\newcommand{\BD}{\mathbb D}
\newcommand{\BR}{\mathbb R}
\newcommand{\BZ}{\mathbb Z}
\newcommand{\Gal}{\operatorname{Gal}}
\newcommand{\Hig}{\operatorname{Hig}}
\newcommand{\Pic}{\operatorname{Pic}}
\newcommand{\Lhig}{{}^{\sL}\!\Hig}
\newcommand{\Lie}{\operatorname{Lie}}
\newcommand{\Ad}{\operatorname{Ad}}
\newcommand{\GL}{\operatorname{GL}}
\newcommand{\SL}{\operatorname{SL}}
\newcommand{\SO}{\operatorname{SO}}
\newcommand{\Al}{\mathit{Al}}
\newcommand{\Hom}{\operatorname{Hom}}
\newcommand{\Mor}{\operatorname{Mor}}
\newcommand{\ssm}{\smallsetminus}
\newcommand{\Spec}{\operatorname{Spec}}
\newcommand{\Aut}{\operatorname{Aut}}
\newcommand{\fppf}{{\operatorname{fppf}}}
\newcommand{\Res}{\operatorname{Res}}
\newcommand{\chr}{\operatorname{char}}
\newcommand{\Bun}{\operatorname{Bun}}
\begin{document}

\title[]{Generic Fibers of Parahoric Hitchin Systems}%
\author{Bin Wang}%
\address{{Address of the Author: Yau Mathematical Science Center, Tsinghua University, Beijing, 100084, China.}}%
\email{\href{mailto:email address}{{wangbin15@mails.tsinghua.edu.cn}}}%
\thanks{}%
\subjclass{}%
\keywords{}%

\begin{abstract}
	In this paper, we talk about parahoric Hitchin systems over smooth projective curves with structure group a semisimple simply connected group. We describe the geometry of generic fibers of parahoric Hitchin fibrations using root stacks. We work over an algebraically closed field with a mild assumption of the characteristic. All of these can be treated as a generalization of $\GL_{n}$ case in \cite{SWW19}. 
\end{abstract}
\maketitle
\section{Introduction}
\label{chapter: introduction}
We fix a semisimple simply connected group $G$ over an algebraically closed field $k$. We fix a maximal torus $T\subset G$ and a Borel subgroup $B$ containing $T$. We write $X^{*}(T)$ (resp. $X_{*}({T}))$ for the character group (resp. cocharacter group) of $T$, as usual. We denote the Weyl group $N_{G}(T)/T$ by $W$. We use the Fraktur font $\g,\gt,\gb, \dots$ to denote the Lie algebra of $G,T,B, \dots$. The Borel group determines a fundamental closed chamber $\sC$ in  $X_{*}(T)\otimes\mathbb{R}$; we denote by $\Al\subset X_{*}(T)\otimes\mathbb{R}$ the closed affine alcove which contains the origin and is contained in that chamber. Throughout the paper, we make the following assumption:
\begin{assumption}\label{blas}
	Let  $h(G)$ be the maximum of Coxeter numbers of simple factors of $\g$. If  $\chr (k)=p>0$, we assume $p>2h(G)$.
\end{assumption}
This ensures that $k[\gt]^{W}$ is a polynomial algebra that can be identified with  $k[\g]^{G}$. The underlying affine space is called the \emph{adjoint quotient} or the  \emph{Chevalley space} of $\g$ and will be denoted by $\gc$. Under this assumption, the \emph{Jacobson-Morozov lemma} (that every nonzero nilpotent element of $\g$ is part of an $\mathfrak{sl}(2)$-triple) still holds.


Let $C$ be a smooth projective curve over an algebraically closed field $k$,  for which is given a finite subset $D\subset C$ and for  each $x\in D$,  a facet $\sF(x)$ of the alcove $\Al$.  The  Bruhat-Tits theory associates to these data a smooth affine group scheme $\sG_{\sF}$ over $C$, to which we shall refer as  a \emph{Bruhat-Tits group scheme} with \emph{parahoric structure $(D,\sF)$}. We assume that $2g(C)-2+\deg D>0$. We here study the geometry of parahoric Hitchin systems over $C$ with the structure group $\sG_{\sF}$.  We give an algebro-geometric interpretation of the completely algebraically integrable system, which has been studied by Baraglia,  Kamgarpour and Varma in \cite{BKV18} over $\BC$. To be more precise,among other things, they studied parahoric Hitchin systems over a Riemann surface when $G$ is semisimple and simply connected, and proved that the generic fibers are Abelian varieties. For each facet $\sF(x)$, we may choose a rational cocharacter $\lambda(x)$ in the interior of $\sF(x)$. We take the point of view of  Balaji and Seshadri \cite{BS14} who interpreted parahoric structures as essentially orbifold structures. Our strategy is to translate parahoric Higgs bundles into Higgs bundles over a root stack $\mathscr{C}$ (which is indeed roughly speaking an analogue of an orbifold curve) associated with the parahoric data $(D,\sF)$ and a choice of $\lambda(x)\in\sF(x)$ for each $x\in D$. Now we can state our main theorem:

\begin{theorem}\label{thm:conclusion in the intro}
	Let $\mathscr{C}$ be the root stack associated with parahoric data $(D,\sF)$ and $\lambda(x)$ for $x\in D$. Then we have an equivalence between the following two categories:
	\begin{equation}
	\{\text{$G$-Higgs bundles over $\mathscr{C}$}\}\longleftrightarrow
	\{\text{ $\sG_{\sF}$-Higgs bundles over C}\}
	\end{equation}
	that is compatible with the Hitchin maps.
\end{theorem}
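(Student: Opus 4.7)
The strategy is to localize at the divisor $D$ and construct the equivalence étale-locally near each $x \in D$, then glue. Away from $D$, both categories reduce to ordinary $G$-Higgs bundles on $C \smallsetminus D$, so all of the content is concentrated at points of $D$. Near such a point $x$, let $N = N(x)$ be the smallest positive integer with $N\lambda(x) \in X_{*}(T)$. The root stack $\mathscr{C}$ is then étale-locally the quotient $[\Spec(k[[u]])/\mu_{N}]$ with $u^{N} = t$, where the $\mu_{N}$-action on $\Spec(k[[u]])$ is standard and the cocharacter $N\lambda(x)$ defines, via $\mu_{N} \hookrightarrow \BG_{m} \xrightarrow{N\lambda(x)} T \subset G$, a $\mu_{N}$-action on $G$ by inner automorphisms. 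A $G$-bundle on $\mathscr{C}$ is the same as a $\mu_{N}$-equivariant $G$-bundle on the ramified cover, which matches the setup of Balaji--Seshadri \cite{BS14}: their main theorem then supplies an equivalence between such equivariant $G$-bundles and $\sG_{\sF}$-torsors on $C$, giving the underlying bundle-level equivalence.

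The main technical task is to upgrade this bundle equivalence to Higgs fields. A Higgs field on $\sE$ over $\mathscr{C}$ is a section $\theta \in H^{0}(\mathscr{C}, \ad(\sE) \otimes \Omega_{\mathscr{C}})$ with $\Omega_{\mathscr{C}}$ the stacky cotangent sheaf. Locally at $x$, the $\mu_{N}$-action via $\lambda(x)$ decomposes $\g = \bigoplus_{\alpha} \g_{\alpha}$ into weight spaces, and $\mu_{N}$-invariance forces each component $\theta_{\alpha}$, expanded as a power series in $u$, to involve only monomials whose $\mu_{N}$-weight cancels that of $\g_{\alpha}$. Using $du = \frac{1}{N} u^{1-N}\,dt$ and pushing forward from $\mathscr{C}$ to $C$, these constraints translate into exactly the local conditions defining a $\sG_{\sF}$-valued Higgs field, where the adjoint bundle of $\sG_{\sF}$ is the pushforward of $\ad(\sE)$ and the cotangent twist is matched with $\Omega_{C}$ up to the prescribed local twists. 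This local matching, which hinges on the combinatorics of the pairings of the roots with $\lambda(x)$ together with the ramification of $\mathscr{C} \to C$, is where I expect the main obstacle: the bookkeeping must be done carefully enough that the two normalizations of Higgs fields (stacky versus parahoric, in the sense of Baraglia--Kamgarpour--Varma \cite{BKV18}) agree on the nose.

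For compatibility with the Hitchin maps, observe that the Chevalley map $\chi\colon \g \to \gc$ is $G$-equivariant with trivial action on $\gc$ (this uses Assumption \ref{blas}), so it descends on both sides and induces each Hitchin map by composition with $\theta$. The stack-side Hitchin base is $\bigoplus_{i} H^{0}(\mathscr{C}, \Omega_{\mathscr{C}}^{\otimes d_{i}})$ with the $d_{i}$ the degrees of a system of generators of $k[\gc]$, while the $\sG_{\sF}$-side base is a corresponding twisted space of sections on $C$. Applying the pushforward/invariant analysis of the previous paragraph to $\gc$ in place of $\g$, and noting that $\lambda(x)$ acts trivially on $\gc \cong \gt /\!/ W$, so that the only surviving contribution is the $du$-scaling, one identifies the two bases. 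The naturality of $\chi$ then intertwines the two Hitchin maps, and once the local Higgs-field computation is in place the compatibility is automatic.
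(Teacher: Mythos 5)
Your proposal follows essentially the same route as the paper: reduce to the formal neighbourhood of each $x\in D$, model the root stack there as a cyclic quotient of a ramified formal disc with the cyclic group acting on $G$ through the cocharacter $\lambda(x)$, match equivariant Higgs fields with parahoric ones by a weight/valuation bookkeeping, and glue with the trivial identification over $C\smallsetminus D$. The one step you flag as the expected obstacle---the local matching of Higgs fields---is exactly what the paper carries out by conjugating with $\eta=(r\lambda(x))(z)\in T(\sK')$, which untwists the equivariance and converts the invariance condition into the valuation bounds $v(\theta_\alpha)>-\langle\alpha,\lambda(x)\rangle$ cutting out the parahoric Lie algebra.
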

We might also state this theorem as identifying two moduli stacks, namely the one of Higgs bundles over $\mathscr{C}$ (denoted by $\Hig_{\mathscr{C}}$---we omit $G$ in the notation for simplicity) and the one of parahoric Higgs bundles over $C$ (denoted by $\Hig_{\sG_{\sF}}$ which then is represented by an algebraic stack. In particular, we also have the following interesting corollary:
\begin{corollary}
	The image of parahoric Hitchin map is indeed an affine space.
\end{corollary}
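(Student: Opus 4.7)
The plan is to transport the question, via the equivalence of Theorem \ref{thm:conclusion in the intro}, from parahoric Higgs bundles on $C$ to ordinary $G$-Higgs bundles on the root stack $\mathscr{C}$, where the Hitchin target is manifestly linear. Since the equivalence intertwines the two Hitchin maps, it is enough to show that the Hitchin map
\[
h_{\mathscr{C}}\colon \Hig_{\mathscr{C}} \longrightarrow \sB_{\mathscr{C}}
\]
for $G$-Higgs bundles on $\mathscr{C}$ is surjective onto its base $\sB_{\mathscr{C}}$, and to identify $\sB_{\mathscr{C}}$ with an affine space.

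First I would spell out $\sB_{\mathscr{C}}$. Let $d_1,\dots,d_r$ be the degrees of a chosen homogeneous system of generators of $k[\gc]=k[\gt]^W$, and let $\omega_{\mathscr{C}}$ be the canonical line bundle on the root stack $\mathscr{C}$ (computed via the standard ramification formula in terms of the $\lambda(x)$). The Chevalley morphism $\g\to\gc$ globalizes to a map of total spaces of vector bundles on $\mathscr{C}$ twisted by $\omega_{\mathscr{C}}$, giving
\[
\sB_{\mathscr{C}} \;=\; \bigoplus_{i=1}^{r} H^{0}\!\left(\mathscr{C},\,\omega_{\mathscr{C}}^{\otimes d_i}\right),
\]
which is a finite-dimensional $k$-vector space and hence an affine space. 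Translating along the equivalence yields the parahoric Hitchin base, and this identification already settles the ``affine space'' part provided surjectivity holds.

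For surjectivity I would use a Kostant-type section. Under Assumption \ref{blas} the Jacobson–Morozov lemma is available, so a regular $\mathfrak{sl}_{2}$-triple $(e,h,f)$ in $\g$ produces a Kostant slice $\gc \hookrightarrow \g$ splitting the Chevalley map. This slice is equivariant for the $\mathbb{G}_m$-action obtained from $h$, so it globalizes to a section
\[
\kappa_{\mathscr{C}}\colon \sB_{\mathscr{C}} \longrightarrow \Hig_{\mathscr{C}}
\]
by twisting with $\omega_{\mathscr{C}}$; the construction is purely local and hence descends to root stacks without change. The composition $h_{\mathscr{C}}\circ \kappa_{\mathscr{C}}$ is the identity, so $h_{\mathscr{C}}$ is surjective. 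Combined with the identification of $\sB_{\mathscr{C}}$ above, this proves that the image of the parahoric Hitchin map is the affine space $\sB_{\mathscr{C}}$.

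The main obstacle I expect is bookkeeping rather than conceptual: one has to check that the Kostant section is well-defined on the root stack (i.e., compatible with the $\mu_n$-actions at the stacky points determined by the $\lambda(x)$) and that, upon descent through the equivalence, the vector space $\bigoplus_i H^0(\mathscr{C},\omega_{\mathscr{C}}^{\otimes d_i})$ matches the correct twisted Hitchin base on $C$ attached to the Bruhat–Tits group scheme $\sG_{\sF}$. Both points are essentially the content of Theorem \ref{thm:conclusion in the intro} specialized to the adjoint quotient, so one just needs to verify compatibility of the chosen invariants with the $W$-action coming from the parahoric data.
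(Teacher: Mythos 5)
Your proposal is correct and follows essentially the same route the paper intends: transport the problem through the equivalence of Theorem \ref{Higgs equi} to the root stack $\mathscr{C}$, identify the Hitchin base there with the affine space $H^{0}(\mathscr{C},\gc\times_{\mathbb{G}_m}\omega_{\mathscr{C}}^{\times})\simeq\bigoplus_i H^{0}(\mathscr{C},\omega_{\mathscr{C}}^{\otimes(1+m_i)})$, and use the Kostant section (available under Assumption \ref{blas}) to get surjectivity onto that base. The paper leaves these steps implicit, but your write-up supplies exactly the missing details, including the correct caveat that the naive base $H^{0}(C,\gc\otimes\omega_{C}(D))$ is \emph{not} hit surjectively and that the actual image is the smaller affine space coming from the root stack.
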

For a parahoric Hitchin system over a Riemann surface, with structure group a semisimple simply connected algebraic group, Baraglia, Kamgarpour and Varma in \cite[Theorem 10]{BKV18} proved that the image has half dimension of the moduli stack of parahoric Higgs bundles. Our result actually can show that the image is indeed an affine space. We also need to point out that if $G$ is not assumed to be simply-connected, the Theorem \ref{thm:conclusion in the intro} does not hold.

We now turn to describe the geometry of generic fibers. The associated root stack actually play a role of bridge. Let $\omega_{\mathscr{C}}$ be the canonical line bundle over the root stack $\mathscr{C}$. A Higgs bundle over 
$\mathscr{C}$ determines an element of $H^{0}(\mathscr{C},\gc\times_{\mathbb{G}_m}\omega_{\mathscr{C}}^{\times})$, here $\omega_{\mathscr{C}}^{\times}$ is the complement of zero section of $\omega_{\mathscr{C}}$ and we treat it as a $\mathbb{G}_{m}$-torsor. We therefore call the  vector space $H^{0}(\mathscr{C},\gc\times_{\mathbb{G}_m}\omega_{\mathscr{C}}^{\times})$ the \emph{Hitchin base} and denote it by $\sA_{\mathscr{C}}$. Thus is defined the  Hitchin map:

\begin{equation}h_{\lambda}:\, \Hig_{\mathscr{C}}\rightarrow\sA_{\mathscr{C}}
\end{equation}

Similar to the curve case in \cite{DG02} and \cite{Ngo10}, for a generic point $a\in\sA_{\mathscr{C}}$, there is a smooth affine commutative group scheme $\sJ_{a}$ over $\mathscr{C}$, whose category of torsors $\Pic(\sJ_{a})$ acts simply and transitively on $h_{\lambda}^{-1}(a)$.  Another of our main results is:
\begin{theorem}
	For a generic point $a\in\sA_{\mathscr{C}}$, the fiber $h_{\lambda}^{-1}(a)$ is a gerbe banded by the Picard stack $\Pic(\sJ_{a})$, where $\sJ_{a}$ is a smooth affine commutative group scheme over $\mathscr{C}$. In particular, this implies that generic fibers of the parahoric Hitchin map are Abelian varieties (if we omit the automorphisms).
\end{theorem}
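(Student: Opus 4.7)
The strategy is to use Theorem~\ref{thm:conclusion in the intro} to transfer the entire problem to $G$-Higgs bundles on the root stack $\mathscr{C}$, and then adapt the Donagi--Gaitsgory--Ngô formalism \cite{DG02, Ngo10} to this stacky setting. The key objects are the Hitchin base $\sA_{\mathscr{C}}$, a universal group scheme of regular centralizers, and the cameral cover associated with a point $a \in \sA_{\mathscr{C}}$.

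First I would construct the regular centralizer group scheme. Over $\g^{\reg}\subset\g$ the centralizer is smooth, commutative and $G$-equivariant; Kostant's section (available under Assumption~\ref{blas}) lets it descend to a smooth commutative affine group scheme $\sJ^{\reg}\to\gc$, which then extends canonically to a smooth commutative affine group scheme $\sJ\to\gc$. Twisting by the $\BG_m$-torsor $\omega_{\mathscr{C}}^{\times}$ and pulling back along a section $a:\mathscr{C}\to\gc\times_{\BG_m}\omega_{\mathscr{C}}^{\times}$ produces the desired $\sJ_{a}\to\mathscr{C}$.

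Second I would define the $\Pic(\sJ_{a})$-action on $h_{\lambda}^{-1}(a)$. For any $(E,\theta)$ with $h_{\lambda}(E,\theta)=a$, there is a canonical homomorphism $\sJ_{a}\to\underline{\Aut}(E,\theta)$, constructed locally via the Kostant section and glued using commutativity of $\sJ$; this lets any $\sJ_{a}$-torsor act on $(E,\theta)$ by twisting. The main step is then to prove that for generic $a$ this action makes $h_{\lambda}^{-1}(a)$ a gerbe banded by $\Pic(\sJ_{a})$. Here ``generic'' should mean that the cameral cover $\widetilde{\mathscr{C}}_{a}\to\mathscr{C}$ (pullback of $\gt\to\gc$ along $a$) is smooth and that the Higgs field lands in $\g^{\reg}$ at every geometric point of $\mathscr{C}$; in this situation $\sJ_{a}\iso\underline{\Aut}(E,\theta)$, which identifies the stabilizer of each object with $\sJ_{a}$-sections, and transitivity follows from a Čech-cohomological argument identifying the ``quotient'' of two Higgs bundles with the same Hitchin image as a $\sJ_{a}$-torsor. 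To conclude the abelian-variety statement I would use the cameral cover to realize $\Pic(\sJ_{a})^{\circ}$ as a Prym-type abelian variety of $\widetilde{\mathscr{C}}_{a}\to\mathscr{C}$; since $G$ is simply connected, the component group vanishes and the coarse moduli of the fiber is an abelian variety.

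The hard part will be controlling $\sJ_{a}$ and $\widetilde{\mathscr{C}}_{a}$ near the stacky points of $\mathscr{C}$. At each $x\in D$, the inertia of the root stack acts on the fiber of $\widetilde{\mathscr{C}}_{a}$, and one must check that smoothness of $\widetilde{\mathscr{C}}_{a}$, everywhere-regularity of the Higgs field, and the identification $\sJ_{a}\iso\underline{\Aut}(E,\theta)$ all persist at the stacky points for generic $a$; equivalently, that the open locus in $\sA_{\mathscr{C}}$ where the cameral cover is smooth over $\mathscr{C}$ is nonempty. This stacky refinement of the regularity arguments of \cite{Ngo10}, together with the compatibility of the Kostant section with the $\BG_m$-equivariance used to build $\sJ_{a}$ from $\sJ$ and $\omega_{\mathscr{C}}^{\times}$, is where most of the technical work should concentrate.
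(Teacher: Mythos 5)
Your overall strategy --- transfer the problem to the root stack via the equivalence of Theorem \ref{thm:conclusion in the intro} and then run the Donagi--Gaitsgory--Ng\^o formalism --- is the one the paper adopts, and your construction of $\sJ_a$ and of the $\Pic(\sJ_a)$-action agrees with Proposition-Definition \ref{Univ cen} and the definition of $\sJ_a$ over $\mathscr{C}$ in Section 4. The difference lies in how the stacky points are treated, and that is exactly where your argument has a genuine gap. You propose to prove the banding directly on $\mathscr{C}$ by a \v{C}ech argument, and you explicitly defer the verification that smoothness of the cameral cover, everywhere-regularity of the Higgs field, and the identification $\sJ_a\cong\underline{\Aut}(E,\theta)$ all persist at the points of $D$ --- but you offer no mechanism for carrying out that verification, and it is not a routine extension of \cite{Ngo10}: at $x\in D$ the objects are $\mu_r$-equivariant data on the cover $\BD_{x'}$, not data on a scheme, so the regularity and automorphism computations of the non-parabolic theory do not literally apply there.

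The paper closes this gap by decomposition and gluing rather than by a direct stacky argument. It writes $\mathscr{C}$ as $U$ glued to the quotient discs $\BD_{x'}/\mu_r$ along $\BD_x^\times$ (Proposition \ref{covers of root stack}), applies \cite[Corollary 17.6]{DG02} only over $U$, and over each disc converts the equivariant problem into a parahoric one by conjugating with the element $\eta=\lambda_o^a(z)\in G(\sK')$ (Propositions \ref{prop:bundleconversion} and \ref{prop:higgsconversion}). The band at $x$ is then the invariant pushforward $(\pi_{*}\Ad_{\eta^{-1}}\sJ_{a,\BD_{x'}})^{\mu_r}$, whose smoothness is Edixhoven's result (Proposition \ref{prop:monodromic centralizer}) and whose torsors are matched with $\mu_r$-equivariant $\sJ_{a,\BD_{x'}}$-torsors by the Balaji--Seshadri descent lemma (Lemma \ref{equiva torsor equiv}); everything is assembled by Corollary \ref{struc grp over curve}. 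If you wish to keep your direct approach you must supply substitutes for these three local statements; as written, the step you label ``the hard part'' is precisely the new content of the theorem in the parahoric setting, and asserting that it should work is not a proof. A secondary issue: you also need a nonemptiness statement for the good locus in $\sA_{\mathscr{C}}$ (an analogue of Proposition \ref{Diamond} adapted to $\omega_{\mathscr{C}}$), which you mention but likewise do not establish.
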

To say that a gerbe \emph{is banded by} $\Pic(\sJ_{a})$, means that there is an action of $\Pic(\sJ_{a})$ on $h_{\lambda}^{-1}(a)$ and for any object $C\in h_{\lambda}^{-1}(a)$, the following is an equivalence of categories:
\begin{equation}
P\in\Pic(\sJ_{a})\rightarrow P\cdot C\in h_{\lambda}^{-1}(a)
\end{equation}
This is a categorical formulation of saying that $\Pic(\sJ_{a})$ acts simply and transitively on $h_{\lambda}^{-1}(a)$.
In order that the reader can compare this with results in the literature, we state  this for parahoric Hitchin systems over $C$ in Theorem \ref{gen fiber}, but here we find it convenient to express this in terms of Hitchin systems over the root stack $\mathscr{C}$. These two statements are however equivalent. We give an application of this result to the $\SL_{n}$ case in the Corollary \ref{cor: app to SLn}. In \cite{SWW19}, such a result is proved by carefully studying the resolution of singularities of spectral curves along with some tricky techniques which are different from our arguments here.

Let us explain how this is related with earlier work.  Michael Groechenig  \cite{Groe16} studied parabolic vector bundles and parabolic Hitchin systems from the point view of Deligne-Mumford curves. Balaji and Seshadri \cite{BS14} showed how to obtain a parahoric $G$-bundle from a principal $G$-bundle over a Galois cover of $C$ endowed with an equivariant Galois action. The proof of our main Theorem \ref{Higgs equi} is motivated by their work. Our theorem that generic fibers are gerbes banded by the Picard stack generalizes a result of Donagi and Gaitsgory \cite{DG02} for "regularized Hitchin systems", who proved this over the complex field $\mathbb{C}$ and without parabolic structure. We should also point out that Ngo's treatment of the symmetry group of  generic fibers of Hitchin fibrations in \cite{Ngo10} also play an important role. This paper can also be viewed as a generalization of the main result of Su-Wang-Wen \cite{SWW19},  where the case of $\GL_{n}$ is carefully studied. Our  Picard stack, which is a tensor category of torsors over a smooth commutative group scheme on the curve $C$, might be regarded as the algebraic counterpart of the complete integrability of parahoric Hitchin maps over complex field $\mathbb{C}$, due to Baraglia, Kamgarpour and Varma \cite{BKV18}.

Our paper is organized as follows:
%
%

In Section 2, we recall the Hitchin fibration in non-parahoric case. The results there indicates what we should expect in parahoric cases. This mainly refers to work of Donagi and Gaitsgory \cite{DG02} and Ngo \cite{Ngo10}. 

%
In Section 3, we first recall the basics of a Bruhat-Tits system, with emphasis on the case of algebraic loop groups and their parahoric subgroups. This mainly makes references to \cite{Bour02},\cite{MT11}.  We also recall the construction of a Bruhat-Tits groups scheme $\sG_{\sF}$ over $C$  from parahoric data $(D,\sF)$ and the equivalence between the category of torsors over $\sG_{\sF}$ and the category of parahoric $G$-bundles which was set up in \cite{BS14}. Then we define parahoric Higgs bundles, Hitchin maps and the Hitchin base. We will talk about the parabolic case to provide a more intuitive picture.
%
%
%
%

In Section 4, we determine what the generic fiber of the parahoric Hitchin map is. The key input will be an equivalence of parahoric Higgs bundles on $C$ with Higgs bundles over root stack $\mathscr{C}$,  as we mentioned at the beginning of this introduction. And by this equivalence we can show that the generic fibers of Hitchin map are gerbes banded by Picard stacks. 
As an application, we prove that when $G=\GL_{n}$, the general fiber is the Picard variety  of a normalized spectral curves, thus  recovering one  of the main results of \cite{SWW19}.

In the Appendix, for readers' convenience, we briefly talk about the N\'eron model of finite type of an algebraic torus over a local field which is used in the Corollary \ref{cor: app to SLn}. And we prove the triviality of torsors of the N\'eron model over a complete DVR with algebraically closed field. Our proof essentially follows from a similar result by B\'egueri \cite[\S 3]{Be80}.
%


\noindent\textbf{Acknowledgements} The author thanks Eduard Looijenga for his great help not only in the mathematics but also in the writing. The author also wants to thank Peigen Li, Yichen Qin, Peng Shan, Xiaoyu Su, Xueqing Wen, Zhiwei Yun, Weizhe Zheng for their useful suggestions.

\section{Geometry of Hitchin System: the Non-Parabolic Case}\label{Section: non-para}

In this section, we briefly recall \emph{the} moduli stack of Higgs $G$-bundles over $C$, Hitchin maps and their geometric properties.
For our later purpose, we restrict ourselves to the case that $G$ is a semi-simple group over $k$ even though all the arguments, with the possible exception of Proposition \ref{Diamond}, hold when $G$ is reductive. As we talk about  non-parahoric cases, we need to assume that $g(C)\geq 2$ in this section.
 
\subsection{Chevalley maps and Kostant sections}
 The variety of Borel subgroups of G (the `flag variety' of $G$) is denoted $\sB$ and can be identified with $G/B$. Since a Borel subgroup is determined by its Lie algebra, we can identify $\sB$ with a closed orbit in the Grassmannian of $\g$. 

The Weyl group $N_G(T)/T$ is denoted $W$ as usual. It acts on $\gt$ as a Coxeter group.  As mentioned in the introduction, if $\chr(k)> 0$, then we assume it to be larger than twice the Coxeter number of every simple summand  of $\g$. Suppose for a moment that $\g$ is simple and let $h(\g)$ be its Coxeter number. The exponents $\{m_{i}\}_{i=1}^{n}$ of $\g$ are positive integers $\le h(\g)$ and there exists a system of basic W-invariants of $k[\gt]$ of degrees $\{m_{i}+1\}_{i=1}^{n}$. We know that $\#W$, is equal to the mapping degree of $\gt\rightarrow\gc$ which is $\prod_{i=1}^{n}(m_{i}+1)$. As a result, $p\nmid\#W$ and then $k[\gt]^W$ is a polynomial algebra by \cite{Bour02}. The natural map $k[\gt]^W\to k[\g]^{G}$ is an isomorphism of $k$-algebras, so that $\Spec k[\g]^{G}\simeq\Spec k[\g]^{W}$ is an affine space. 

The same conclusion holds if  $\g$ has several simple factors,  $\g_1, \dots, \g_r$, say, for then $k[\g]^G$ decomposes as a tensor product $k[\g_1]^{G_1}\otimes_k\cdots \otimes_k k[\g_r]^{G_r}$ and the $W$-action on $\gt$  decomposes accordingly as a direct sum. 
We write $\gc$  for $\Spec k[\gt]^{W}$ and sometimes refer to as the \emph{Chevalley base}.

\subsection{Higgs Bundles and Hitchin Maps}

We start with some basic knowledge of semisimple Lie algebras. 
If $\gb'\subset \g$ is a Borel algebra, then there exists a $g\in G$ such that $\Ad(g)$ takes $\gb'$ to $\gb$. This $g$ is unique up to left multiplication
with an element of $B$ and the postcomposite of this  map with $\gb\to \gb/\gn_\gb\cong \gt$ is independent of the choice of $g$. 
So the `universal Borel algebra' $\tilde{\g}:=\{(x,[\gb'])|x\in\gb',[\gb']\in\sB\}\subset \g\times\sB$ fits in a commutative diagram:
\begin{equation}\label{Gro Sim Res}
\xymatrix{
	\tilde{\g}\ar[r]^{\tilde{\chi}}\ar[d]^{\pi}&\gt\ar[d]^{\pi}\\
	\g\ar[r]^{\chi}&\gc}
\end{equation}    
The projection $\pi:\tilde{\g}\rightarrow\g$ is called \emph{Grothendieck's simultaneous resolution}.
If we restrict this diagram to the regular part $\g^{reg}$ of $\g$, we get a Cartesian diagram:
\begin{equation}\label{car reg}
\xymatrix{
	\tilde{\g}^{reg}\ar[r]^{\tilde{\chi}^{reg}}\ar[d]^{\pi^{reg}}&\gt\ar[d]^{\pi}\\
	\g^{reg}\ar[r]^{\chi^{reg}}&\gc}
\end{equation}

There is a section of the Chevalley map $\chi^{reg}:\g^{reg}\rightarrow \gc$, and we explain it now. Let $\sR(G,T)$ be the set of roots with respect $T$ so that we have the following root space decomposition:
\begin{equation*}
\g=\gt\oplus(\oplus_{\alpha\in \sR(G,T)}\g_{\alpha})
\end{equation*}
We define a regular nilpotent element: 
\begin{equation*}
x^{+}=\sum_{i=1}^{r}e_{\alpha_{i}}
\end{equation*}
where $e_{\alpha_{i}}\in\g_{\alpha_{i}}\ssm\{0\}$. Because of Assumption \ref{blas}, the Jacobson-Morozov lemma holds for $\g$ and so
$x^+$ is part of a $\mathfrak{sl}_{2}$ triple $(x^{+},x^{-},h)$. The choice of this triple is unique up to conjugation by an element of $Z_{G}(x^{+})$.

The following theorem is well-known and due to Kostant \cite{Kos63} over $\mathbb{C}$, and due to Veldkamp \cite{Vel72} when $p\nmid\# W$. 
\begin{theorem}\cite[Proposition 6.3]{Vel72}
	The map $\chi^{reg}$ is a smooth map. Given a regular nilpotent element $x^{+}$, then $\chi^{reg}: x^{+}+Z_{\g}(x^{-})\rightarrow \gc$ is an isomorphism. 	
\end{theorem}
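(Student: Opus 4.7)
The plan is to combine an $\mathfrak{sl}_{2}$-decomposition argument at the level of tangent spaces with a grading (weighted $\BG_{m}$) argument globalizing it, and then to deduce smoothness of $\chi^{reg}$ from the existence of the section.

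\smallskip

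\textbf{Setup.} Complete $x^{+}$ to an $\mathfrak{sl}_{2}$-triple $(x^{+},x^{-},h)$ (possible by Jacobson--Morozov under Assumption \ref{blas}) and integrate $h$ to a cocharacter $\rho:\BG_{m}\rightarrow G$ (or, rather, $h/2$ when $h$ is even, otherwise $h$; the $2$-factor is absorbed below). Introduce the twisted $\BG_{m}$-action on $\g$ by $t\cdot x:=t^{2}\Ad(\rho(t^{-1}))x$. A direct check gives $t\cdot x^{+}=x^{+}$, while any weight vector $v\in\g$ with $[h,v]=-2m\,v$ satisfies $t\cdot v=t^{2m+2}v$. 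Since the $\mathfrak{sl}_{2}$-theory identifies $Z_{\g}(x^{-})$ with the sum of the lowest weight spaces and the exponents $m_{i}$ are $\ge 1$, the action has strictly positive weights $2m_{i}+2$ on $Z_{\g}(x^{-})$. On the other hand $\chi$ is $\BG_{m}$-equivariant for the standard scaling on $\gc$, for which the basic invariants $f_{i}$ of degree $m_{i}+1$ have weight $2(m_{i}+1)$, matching exactly the weights on $Z_{\g}(x^{-})$.

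\smallskip

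\textbf{Regularity of the section.} First I would show $x^{+}+Z_{\g}(x^{-})\subset\g^{reg}$. The $\BG_{m}$-action preserves the regular locus and contracts every $x\in x^{+}+Z_{\g}(x^{-})$ to $x^{+}$ as $t\to 0$; since $x^{+}$ is regular and $\g^{reg}$ is open, $t\cdot x\in\g^{reg}$ for small $t\ne 0$, and therefore $x\in\g^{reg}$.

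\smallskip

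\textbf{Isomorphism on the section.} Next, compute $d\chi$ at $x^{+}$. Using $\mathfrak{sl}_{2}$-weight bookkeeping one checks that in each irreducible summand, $\mathrm{image}(\ad x^{+})$ and $\ker(\ad x^{-})$ together exhaust the module, giving the decomposition $\g=[\g,x^{+}]\oplus Z_{\g}(x^{-})$. Since $d\chi_{x^{+}}$ kills the tangent to the $G$-orbit, i.e.\ $[\g,x^{+}]$, its restriction to $T_{x^{+}}(x^{+}+Z_{\g}(x^{-}))=Z_{\g}(x^{-})$ is injective, and a dimension count ($\dim Z_{\g}(x^{-})=r=\dim\gc$) makes it an isomorphism on tangent spaces at the fixed point. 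Both source and target are affine spaces carrying $\BG_{m}$-actions with strictly positive weights and the same weight multiplicities, and $\chi$ is $\BG_{m}$-equivariant. I would then conclude with a graded Nakayama argument: such an equivariant map of positively-graded affine spaces that is an isomorphism on the cotangent space at the unique fixed point is automatically an isomorphism (equivalently, the pulled-back invariants form a homogeneous system of parameters with matching degrees, hence generate freely).

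\smallskip

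\textbf{Smoothness of $\chi^{reg}$.} Finally, with the Kostant section $\kappa:\gc\iso x^{+}+Z_{\g}(x^{-})\hookrightarrow\g^{reg}$ in hand, I would factor $\chi^{reg}$ through the adjoint-orbit map $\mu:G\times(x^{+}+Z_{\g}(x^{-}))\rightarrow\g^{reg}$, $(g,y)\mapsto\Ad(g)y$. By the transversality $\g=[\g,x^{+}]\oplus Z_{\g}(x^{-})$ and its $\BG_{m}$-spread to all of $x^{+}+Z_{\g}(x^{-})$, $\mu$ is a smooth surjection (its fibers are torsors under the smooth group $Z_{G}(x^{+})$), and $\chi^{reg}\circ\mu$ is the projection to the second factor followed by the isomorphism of the previous step. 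Smoothness of $\chi^{reg}$ then follows by smooth descent along $\mu$.

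\smallskip

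The delicate step is the last part of the isomorphism argument, because over a field of positive characteristic one must be careful that the degrees of the basic $W$-invariants really are $m_{i}+1$ and that $k[\gt]^{W}=k[\g]^{G}$ is a polynomial ring matching the $\BG_{m}$-weights on $Z_{\g}(x^{-})$; this is exactly what Assumption \ref{blas} (together with $p\nmid\#W$ and Veldkamp's result) secures, so the graded-Nakayama input is available.
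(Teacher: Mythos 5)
The paper does not actually prove this statement: it is imported from Kostant \cite{Kos63} and Veldkamp \cite[Proposition 6.3]{Vel72} (with a pointer to Riche for weaker hypotheses on $p$), so there is no in-paper argument to compare yours against. Your outline is the classical Kostant--Veldkamp proof skeleton: the twisted contracting $\mathbb{G}_m$-action $t\cdot x=t^{2}\Ad(\rho(t^{-1}))x$, the matching of positive weights between $Z_{\g}(x^{-})$ and $\gc$, the contraction argument for $x^{+}+Z_{\g}(x^{-})\subset\g^{reg}$, and graded Nakayama. That architecture is correct and those parts are fine.

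There is, however, a genuine gap at the step you label ``isomorphism on the section.'' From ``$d\chi_{x^{+}}$ kills the tangent to the $G$-orbit, i.e.\ $[\g,x^{+}]$'' you infer that its restriction to the complement $Z_{\g}(x^{-})$ is injective. That inference is invalid: knowing the kernel \emph{contains} $[\g,x^{+}]$ says nothing about the restriction to a complementary subspace; what is needed is that the kernel equals $[\g,x^{+}]$ exactly, i.e.\ that $d\chi_{x^{+}}$ has rank $r=\dim\gc$ at the regular nilpotent $x^{+}$. This is Kostant's differential criterion for regularity (the differentials $df_{1},\dots,df_{r}$ of the basic invariants are linearly independent precisely at regular elements), which is the real content of \cite[Proposition 6.3]{Vel72} and is exactly where the characteristic hypothesis bites beyond the fact that $k[\gt]^{W}$ is polynomial. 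Since your graded-Nakayama step takes the tangent-space isomorphism as its sole nonformal input, the isomorphism claim rests entirely on this unproved point. The smoothness part has a parallel issue: smooth descent along $\mu:G\times(x^{+}+Z_{\g}(x^{-}))\rightarrow\g^{reg}$ requires $\mu$ to be \emph{surjective} onto $\g^{reg}$, i.e.\ that every regular element is conjugate into the Kostant slice, equivalently that the regular locus of each fiber of $\chi$ is a single $G$-orbit. This is again a theorem of Kostant/Veldkamp, not a consequence of the transversality $\g=[\g,x^{+}]\oplus Z_{\g}(x^{-})$, which only yields smoothness of $\mu$ and openness of its image. Both gaps need either the differential criterion or a substitute (e.g.\ Riche's argument) to be supplied.
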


It is worth pointing out that Riche \cite[Theorem 3.3.2]{Ric17} proved this with weaker assumption on $p$, where $Z_{\g}(x^{-})$ is replaced by any $\mathbb{G}_{m}$ invariant complement of $[x^+,\gn]$ containing in $\gb$. The inverse of this isomorphism is called a \emph{Kostant section}. Clearly,  the Kostant section constructed above only depends on the choice of the $\mathfrak{sl}_{2}$-triple. It takes $0$ to  $x^+$ and lands  in $\g^{reg}$. Two Kostant sections that take $0$ to $x^+$ are $Z_{G}(x^{+})$-conjugate. We shall denote such a Kostant section by $\epsilon$.

For every root $\alpha\in \sR(G,T)$, we regard its differential  $d\alpha$ as a linear form on $\gt$, so that we can form the product
\begin{equation}
\textstyle \prod_{\alpha\in \sR(G,T)}d\alpha\in k[\gt].
\end{equation}
This element is obviously $W$-invariant and so defines an element of $k[\gc]$. 
\begin{definition}[Discriminant Divisor]
	The resulting principal divisor $\mathscr{D}$ in $\gc$ is called the \emph{discriminant divisor}. 
\end{definition}

We denote its complement in $\gc$ by $\gc^{rs}$. The following lemma explains the name:

\begin{lemma}\cite{Ngo10}
	The discriminant divisor $\mathscr{D}$ is reduced and the natural map $\pi:\gt\rightarrow\gc$ is etale over $\gc^{rs}$.
\end{lemma}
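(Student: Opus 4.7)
The plan is to exploit that $\pi\colon\gt\to\gc$ is the quotient by the reflection group $W$ and to localize at generic points of the reflection hyperplanes. Chevalley--Shephard--Todd (applicable because Assumption \ref{blas} forces $p\nmid |W|$) says that $k[\gt]^{W}$ is polynomial and that $\pi$ is finite flat of degree $|W|$, so \'etaleness can be tested via unramifiedness, i.e.~via stabilizers of the $W$-action.

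First I would prove the set-theoretic statement: $\pi$ is \'etale at $\pi(t)$ if and only if the stabilizer $W_{t}\subset W$ is trivial. By Steinberg's theorem on reflection group stabilizers (valid in good characteristic), $W_{t}$ is generated by those reflections $s_{\alpha}$ with $\alpha(t)=0$, so $W_{t}=1$ precisely when $\alpha(t)\neq 0$ for every root $\alpha\in\sR(G,T)$, equivalently $(\prod_{\alpha}d\alpha)(t)\neq 0$. Since $\prod_{\alpha\in\sR(G,T)}d\alpha=\pm\delta^{2}$ with $\delta=\prod_{\alpha>0}d\alpha$, this identifies the branch set of $\pi$ with the support of $\mathscr{D}$; combined with flatness this gives \'etaleness over $\gc^{rs}$.

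For reducedness, I would work locally at a generic point $t$ of a reflection hyperplane $H_{\alpha}\subset\gt$, where $W_{t}=\langle s_{\alpha}\rangle\cong\BZ/2$. Choose local coordinates so that $d\alpha=x_{1}$ and $s_{\alpha}$ acts by $x_{1}\mapsto -x_{1}$ fixing $x_{2},\dots,x_{n}$ (possible since $p\neq 2$); then $k[\gt]^{W_{t}}$ is locally $k[u_{1},x_{2},\dots,x_{n}]$ with $u_{1}=x_{1}^{2}$. For each root $\beta\neq\pm\alpha$, $d\beta$ is a unit at $t$, and pairing $\beta$ with $s_{\alpha}(\beta)$ shows $\prod_{\beta\neq\pm\alpha}d\beta$ is a $W_{t}$-invariant unit; hence $\pm\delta^{2}=u_{1}\cdot(\text{unit})$ in the local ring at $\pi(t)$, so the divisor is reduced at that generic point. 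Since $\mathscr{D}$ is an effective Cartier divisor on the smooth variety $\gc$, reducedness at all generic points of its support implies reducedness as a divisor.

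The main obstacle is the local computation of the third step: one must verify that grouping the non-$\pm\alpha$ roots into $s_{\alpha}$-orbits actually produces a $W_{t}$-invariant unit, and that this unit survives the \'etale descent along $\gt/W_{t}\to\gc$ near $\pi(t)$. The assumption $p>2h(G)$ enters precisely here to guarantee both that Steinberg's theorem applies and that each $s_{\alpha}$ acts as an honest order-$2$ reflection, so that the $x_1^2$ model is correct.
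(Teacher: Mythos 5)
Your argument is correct and complete. Note that the paper itself gives no proof of this lemma --- it is simply quoted from Ngo --- so there is nothing internal to compare against; what you have written is a sound, self-contained version of the standard argument. The two places you flag as delicate do check out: since $s_{\alpha}$ permutes $\sR(G,T)\setminus\{\pm\alpha\}$, the product $\prod_{\beta\neq\pm\alpha}d\beta$ is $\langle s_{\alpha}\rangle$-invariant and nonvanishing at the generic point $t$ of $H_{\alpha}$, hence a unit in the invariant local ring $(\sO_{\gt,t})^{W_{t}}$; and that invariant local ring is a DVR with uniformizer $u_{1}=x_{1}^{2}$ because (using $p\neq 2$) no $W_{t}$-invariant element can have odd $x_{1}$-valuation, while the identification of this DVR with $\sO_{\gc,\pi(t)}$ follows from \'etaleness of $\gt/W_{t}\to\gt/W$ at points whose full stabilizer is $W_{t}$. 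The only stylistic difference from Ngo's treatment is in the \'etaleness step: rather than invoking Steinberg's theorem on stabilizers, one usually observes that the Jacobian determinant of the basic invariants $f_{1},\dots,f_{n}$ equals $\prod_{\alpha>0}d\alpha$ up to a nonzero scalar (valid since $p\nmid\#W$), so $\pi$ is \'etale exactly off the union of the reflection hyperplanes; this is slightly more economical and yields the same identification of the branch locus with $\Supp\mathscr{D}$. Either route is fine under Assumption \ref{blas}, and your reducedness computation at the generic points of the reflection hyperplanes is exactly the standard one.
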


We note that in  the restriction of  the Cartesian diagram \ref{car reg} over $\gc^{rs}$,
\begin{equation}\label{rs cartesian}
\xymatrix{
	\tilde{\g}^{rs}\ar[r]^{\tilde{\chi}^{rs}}\ar[d]^{\pi^{rs}}&\gt^{rs}\ar[d]^{\pi}\\
	\g^{rs}\ar[r]^{\chi^{rs}}&\gc^{rs}
}
\end{equation}
both the left and right vertical arrows are $W$-torsors. 
We can now give the definition of a Higgs bundle and the Hitchin map. Naively speaking, we can view a Hitchin map as a family of diagrams \ref{Gro Sim Res}  twisted by very ample line bundles over $C$. Given any variety $V$ with an $\mathbb{G}_{m}$-action and a line bundle $\sL$ over $C$, we shall denote the fiber bundle $\sL^{\times}\times_{\mathbb{G}_{m}}\! V$ over $C$ (with fiber $V$) by $V_{\sL}$, here $\sL^{\times}$ is the complement of zero section in total space of $\sL$.


\begin{definition}[$G$-Higgs Bundles]
	Let $\sL$ be a line bundle over $C$ and we assume $\deg\sL>2g$, or $\sL=\omega_{C}$, the canonical line bundle. 
	A \emph{Higgs bundle over $C$} is a pair $(E,\theta)$, where $E$ is a principal $G$-bundle and
	\begin{equation}
	\theta\in H^{0}(C,E(\g)\otimes\sL)
	\end{equation} 
	(called a \emph{Higgs field}). Here $E(\g)$ is the adjoint bundle. 
	
	If $\theta$  is  everywhere regular, i.e. $\theta$ lies in $E(\g^{reg})_{\sL}$, we call $(E,\theta)$ a \emph{regular Higgs bundle}. 
\end{definition}

The following theorem is known: 
\begin{theorem}
	The moduli functor 
	\begin{equation}
	\mathbf{H}: \mathsf{Sch}/k\rightarrow \mathsf{Groupoid}
	\end{equation}
	assigning to a $k$-scheme $Y$ the groupoid of Higgs bundles over $Y\times C$ is represented by an algebraic stack  $\Lhig$. This stack contains an open substack $\Lhig^{reg}$ which parametrizes regular Higgs bundles and this substack is smooth.
\end{theorem}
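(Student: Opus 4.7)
The plan is to reduce representability to known results on the moduli of principal bundles and to establish openness and smoothness via standard tangent/obstruction theory. For representability, I would begin with the classical fact that the moduli stack $\Bun_G$ of principal $G$-bundles on $C$ is algebraic (Behrend, Laszlo--Sorger). The Hitchin moduli fibers over $\Bun_G$: for a test scheme $Y$, a $Y$-point of $\Lhig$ is a pair $(E,\theta)$ with $E$ a $G$-bundle on $C\times Y$ and $\theta\in H^0(C\times Y, E(\g)\otimes p_C^*\sL)$. Concretely, $\Lhig$ is the total space (in the sense of vector bundle stacks) of the complex $Rp_{\ast}(\mathcal{E}_{\mathrm{univ}}(\g)\otimes\sL)$ over $\Bun_G$, where $p\colon C\times\Bun_G\to\Bun_G$ is the projection and $\mathcal{E}_{\mathrm{univ}}$ is the universal bundle. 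Since $\Bun_G$ is algebraic and the total space construction is algebraic, $\Lhig$ is algebraic.

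For openness of $\Lhig^{reg}\subset\Lhig$, I would use that the complement $\g\smallsetminus\g^{reg}$ is a Zariski-closed, $G$-invariant subvariety of $\g$. Hence $(E(\g)\smallsetminus E(\g^{reg}))_{\sL}$ is closed inside $E(\g)_{\sL}$, and for any family of Higgs bundles parameterised by $Y$, the set of $y\in Y$ for which the Higgs field fails to be everywhere regular is the image under the proper projection $C\times Y\to Y$ of the closed preimage of the non-regular locus. Properness makes this image closed in $Y$, so $\Lhig^{reg}$ is open in $\Lhig$.

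Smoothness of $\Lhig^{reg}$ is the main technical point. The strategy is a tangent complex computation at a regular point $(E,\theta)$: the relevant complex is
\[
T_{(E,\theta)} = \bigl[\,E(\g)\xrightarrow{\,[\theta,-]\,}E(\g)\otimes\sL\,\bigr]
\]
placed in degrees $[0,1]$, with $\mathbb{H}^1(T)$ the space of first-order deformations and $\mathbb{H}^2(T)$ the obstruction space; smoothness as an algebraic stack is equivalent to $\mathbb{H}^2(T)=0$. Since $\theta$ is regular everywhere, $[\theta,-]$ has constant fibrewise rank $\dim\g-r$, so $\ker[\theta,-]$ is the centraliser subbundle $J_\theta\subset E(\g)$ of rank $r$. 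Serre duality together with the Killing-form self-duality of $E(\g)$ identifies $\mathbb{H}^2(T)^{\vee}$ with $H^0(C, J_\theta\otimes\sL^{-1}\otimes\omega_C)$; the positivity hypothesis $\deg\sL>2g$ (or the self-dual symmetry available when $\sL=\omega_C$) is then used to force this group to vanish.

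The step I expect to be the main obstacle is the vanishing of $H^0(C, J_\theta\otimes\sL^{-1}\otimes\omega_C)$: one needs to control the slopes of the centraliser subbundle $J_\theta$ uniformly, which requires understanding $J_\theta$ via the cameral/spectral description (where it arises as the descent of a constant $\gt$-sheaf along a finite cover). I would therefore follow Ngo's approach of relating $J_\theta$ to the regular centraliser group scheme pulled back from the Hitchin base, and combine this with the positivity of $\sL$ to complete the $\mathbb{H}^2$-vanishing argument; in the canonical case $\sL=\omega_C$ the same tangent complex carries a natural symmetric self-pairing that gives the smoothness formally.
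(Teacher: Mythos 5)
The paper does not actually prove this statement: it is quoted as ``known'' and implicitly deferred to \cite{Ngo10}, where smoothness of the regular open substack is obtained indirectly, by exhibiting $\Lhig^{reg}$ over $\sA$ as a torsor under the Picard stack $\Pic(\sJ_a)$ (via the Kostant section) and observing that deformations of torsors under a smooth affine group scheme on a curve are unobstructed. Your route is genuinely different: representability by descending to an affine cone over $\Bun_G$ (the paper itself runs exactly this $\Spec(\Sym^{\bullet}W)$ argument in its parahoric analogue, Theorem 3.5 --- note that what you want is the total space of $p_*$, not of the full complex $Rp_*$, so cohomology-and-base-change has to be handled through such a $W$), openness of the regular locus by properness of $C$ (correct), and smoothness by a direct computation of the obstruction space $\mathbb{H}^2$ of the two-term complex $[E(\g)\xrightarrow{\operatorname{ad}\theta}E(\g)\otimes\sL]$. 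The duality computation $\mathbb{H}^2{}^{\vee}\cong H^0(C,J_\theta\otimes\sL^{-1}\otimes\omega_C)$ is right (left-exactness of $H^0$ plus constancy of the rank of $\operatorname{ad}\theta$ on the regular locus). Your approach buys a pointwise criterion that does not need the Kostant section or the symmetry of the fibration; the standard approach avoids any slope estimate on $J_\theta$.

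The genuine gap is the final vanishing, which you defer. It cannot be waved through: a priori $J_\theta$ could contain a positive sub-line bundle, and your fallback for $\sL=\omega_C$ --- that self-duality of the tangent complex ``gives smoothness formally'' --- is not correct, since self-duality only identifies $\mathbb{H}^2$ with the dual of $\mathbb{H}^0=H^0(C,J_\theta)=\operatorname{Lie}\Aut(E,\theta)$, whose vanishing is exactly what must be proved (and would fail, e.g., for a reductive group with central torus). The fix is the identification you name but do not use: by Proposition--Definition \ref{Univ cen}, $J_\theta\cong a^*(\operatorname{Lie}\sJ)_{\sL}$ for $a=\chi(\theta)$, uniformly over all regular points (not only where the cameral cover is generically \'etale), and $\operatorname{Lie}\sJ\cong(\gt\otimes k[\gt])^W\otimes_{k[\gt]^W}\sO_{\gc}$ is a free $\sO_{\gc}$-module on generators of $\mathbb{G}_m$-weight $m_1,\dots,m_n$ (the gradients of the basic invariants), so that $J_\theta\cong\oplus_i\sL^{-m_i}$. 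Hence $H^0(C,J_\theta\otimes\sL^{-1}\otimes\omega_C)=\oplus_i H^0(C,\sL^{-(m_i+1)}\otimes\omega_C)$, and each summand has degree $-(m_i+1)\deg\sL+2g-2\le-2\deg\sL+2g-2<0$ both when $\deg\sL>2g$ and when $\sL=\omega_C$ with $g\ge 2$, which is where the section's standing hypotheses enter. With that inserted, your argument is complete.
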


Since $\mathbb{G}_{m}$ acts on $\g$ as scalar multiplication and the action commutes with the adjoint action of $G$, we can form a smooth stack $(\g\otimes_k\sL)/G$ over $C$.

\begin{lemma}
	The algebraic stack $\Lhig$ is equivalent to a Hom stack:
	\begin{equation}
	\Hom_{C}(C,[\g\otimes\sL/G])
	\end{equation}
\end{lemma}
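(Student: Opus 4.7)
The plan is to verify this equivalence by unpacking the universal property of the quotient stack $[\g\otimes\sL/G]$ viewed as an algebraic stack over $C$. Given a $k$-scheme $T$, the general theory of Hom stacks identifies the groupoid of $T$-points of $\Hom_{C}(C,[\g\otimes\sL/G])$ with the groupoid of morphisms $T\times C\to[\g\otimes\sL/G]$ over $C$, which in turn is equivalent to the groupoid of pairs $(E,\phi)$, where $E$ is a principal $G$-bundle on $T\times C$ and $\phi\colon E\to p^{*}(\g\otimes\sL)$ is a $G$-equivariant morphism over $T\times C$. Here $p\colon T\times C\to C$ denotes the projection, and $G$ acts on $\g\otimes\sL$ fiberwise through the adjoint representation.

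The next step is to translate $\phi$ into a section of an associated fiber bundle. By the classical bijection between $G$-equivariant morphisms out of a principal $G$-bundle and sections of the associated bundle, the datum of $\phi$ is equivalent to a section of $E\times_{G}(\g\otimes\sL)=E(\g)\otimes p^{*}\sL$ over $T\times C$. This is exactly a $T$-family of Higgs fields on $E$, so the assignment $(E,\phi)\mapsto(E,\theta)$ defines a functor from $\Hom_{C}(T\times C,[\g\otimes\sL/G])$ to the groupoid $\mathbf{H}(T)$ of $T$-families of Higgs bundles. Running the construction in reverse produces a quasi-inverse, yielding an equivalence of groupoids for every $T$.

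To conclude, I would check naturality in $T$: pullback of principal bundles and of sections is manifestly compatible with the identifications above, so the equivalences assemble into an equivalence of pseudofunctors, that is, an equivalence $\Lhig\simeq\Hom_{C}(C,[\g\otimes\sL/G])$ of algebraic stacks. The only subtle point is keeping track of the relative structure over $C$, so that $\phi$ lands in the pulled-back bundle $p^{*}(\g\otimes\sL)$ rather than merely in the $G$-variety $\g$; this is what records the twist by $\sL$. Beyond this bookkeeping, the argument is a direct application of the universal property of the quotient stack, and I do not anticipate any substantive obstacle.
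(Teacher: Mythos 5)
Your argument is essentially identical to the paper's own proof: both unpack the definition of the Hom stack as pairs $(\sE,\Theta)$ with $\Theta\colon\sE\to \mathsf{pr}_C^{*}(\g\otimes\sL)$ a $G$-equivariant map, and then identify such equivariant maps with sections of $\sE(\g)\otimes\mathsf{pr}_C^{*}\sL$, i.e.\ with Higgs fields. Your additional remarks on naturality in $T$ only make explicit what the paper leaves implicit; there is no substantive difference.
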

\begin{proof}
	
	By definition, $\Hom(C,[\g\otimes\sL/G])$ assigns to $Y\in \mathsf{Sch}/k$ the set of pairs 	
	$(\sE,\Theta)$, where $\sE$ is a $G$-bundle over $Y\times C$ and 
	$\Theta:\sE\rightarrow \mathsf{pr}_C^{*}(\g\otimes{\sL})$ is a $G$-equivariant map ($\mathsf{pr}_C: Y\times C\rightarrow C$ is the projection).
	It is not difficult to see that $\Theta$ is then an element of 
	$H^{0}(Y\times C,\sE(\g)\otimes\mathsf{pr}_C^{*}\sL)$.
\end{proof}
\begin{remark}
	Our description is a little bit different from that in \cite{Ngo10}, where Ngo gave a more general result.
\end{remark}

Because of our Assumption \ref{blas}, $k[\gt]^W$  admits algebraically independent  generators homogeneous of degree $1+m_1, \dots, 1+m_n$, 
where $m_1, \dots, m_n$ are the exponents of $W$. Then $\mathbb{G}_{m}$ acts on $\gc$ with weights $1+m_1, \dots, 1+m_n$, and this identifies $\gc_{\sL}$ with $\oplus_{i=1}^n\sL^{\otimes{(1+m_i)}}$. The natural $G$-invariant map $\g\rightarrow\gc$ is also $\mathbb{G}_{m}$-equivariant and so there is an induced map 
$E(\g)\otimes\sL\to \gc\otimes\sL$. We shall write $\sA$ for $H^{0}(C,\gc\otimes_{\sO_{C}}\sL)$.  So by our discussion, $\sA$ can be identified with the affine space
$\oplus_{i=1}^n H^0(C, \sL^{\otimes{(1+m_i)}})$.

\begin{definition}[Hitchin Maps]
	The \emph{Hitchin map} is defined as:
	\begin{equation}
	h:\Lhig\rightarrow \sA=H^{0}(C,\gc_{\sL}), \quad 
	(E_{G},\theta)\mapsto \chi(\theta).
	\end{equation}   	    	
\end{definition}

A Kostant section defines a section of $E_G(\g)\otimes\sL\to \gc\otimes\sL$ which takes values in $E_{G}(\g^{reg})_{\sL}$. This shows that $h:\Lhig_G^{reg}\rightarrow \sA$ is surjective.

To describe the generic fibers of the Hitchin map $h$, we need the following theorem in \cite{Ngo10}, which will also be useful in \emph{parahoric} case. Let us first introduce the closed subscheme of $\g\times G$ defined by
\begin{equation}
\sI:=\{(x,g)\in \g\times G\, |\, Ad_{g}(x)=x\}.
\end{equation}
Note that $\sI$  is $G$-invariant for the adjoint resp.\ conjugate  action of $G$ on $\g$ resp.\ $G$  (so given by  $h(x,g)=(\Ad_h(x), hgh^{-1})$) and that this makes $\sI$ a group scheme over $\g$, the fiber over $x\in \g$ being $Z_G(x)$. The $\mathbb{G}_{m}$-action on the first factor of $\g\times G$ evidently preserves $\sI$.

\begin{propdef}\cite[Lemma 2.1.1]{Ngo10}\label{Univ cen}
	There exists a smooth affine commutative group scheme $\sJ$ over $\gc$ to which the (scalar) $\mathbb{G}_{m}$-action on $\gc$ naturally lifts and a natural morphism  $\phi:\chi^{*}(\sJ)\to \sI$  of  group schemes over $\g$ with $\mathbb{G}_{m}$-action with the property that this is an isomorphism over $\g^{reg}$. 	We call the smooth commutative group scheme $\sJ$ over $\gc$ the \emph{universal regular centralizer}.
\end{propdef}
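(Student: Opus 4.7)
The plan is to first build $\sJ$ on the regular locus using the Kostant section, then transport the structure to $\gc$, and finally extend the comparison map from $\g^{\text{reg}}$ to all of $\g$ by a codimension argument.

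First, I would use the Kostant section $\epsilon:\gc\to\g^{\text{reg}}$ constructed earlier. The centralizer group scheme $\sI|_{\g^{\text{reg}}}$ is smooth, affine and commutative (regularity of $x$ forces $Z_G(x)$ to be a commutative group of dimension equal to the rank). Define $\sJ:=\epsilon^{*}(\sI|_{\g^{\text{reg}}})$ as a group scheme over $\gc$; by construction $\sJ$ is smooth, affine and commutative. Independence of the particular choice of Kostant section: any two Kostant sections $\epsilon,\epsilon'$ send a point $c\in\gc$ to regular elements in the same fiber of $\chi$, hence to conjugate elements under some $g(c)\in G$, and conjugation by $g(c)$ identifies $Z_G(\epsilon(c))$ with $Z_G(\epsilon'(c))$; this identification is canonical because both centralizers are commutative, so any two choices of $g(c)$ (which differ by an element of $Z_G(\epsilon(c))$) induce the same isomorphism.

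Next I would construct $\phi:\chi^{*}\sJ\to \sI$ over the regular locus $\g^{\text{reg}}$ by the same trick: given $x\in\g^{\text{reg}}$ with $c=\chi(x)$, pick $g\in G$ with $\Ad_g\epsilon(c)=x$ and define
\[
\phi_x:\sJ_c=Z_G(\epsilon(c))\longrightarrow Z_G(x)=\sI_x,\qquad z\longmapsto gzg^{-1}.
\]
Commutativity of $Z_G(\epsilon(c))$ again kills the ambiguity in the choice of $g$, so $\phi_x$ is canonical and manifestly an isomorphism. A standard relative version of this construction (working over the universal pair $(x,\epsilon(\chi(x)))$ and trivializing the $Z_G(\epsilon(c))$-torsor $\{g:\Ad_g\epsilon(c)=x\}$ Zariski-locally) upgrades this pointwise description to a morphism of group schemes over $\g^{\text{reg}}$.

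The main obstacle is the extension of $\phi$ from $\g^{\text{reg}}$ to $\g$. For this I would invoke the Kostant codimension theorem: under Assumption \ref{blas}, $\g\ssm\g^{\text{reg}}$ has codimension at least $3$ in $\g$. Since $\chi^{*}\sJ$ is smooth (hence normal) over $\g$ and $\sI\to\g$ is affine, any morphism defined on $\g^{\text{reg}}$ into $\sI$ extends uniquely to $\g$ by Hartogs' lemma; the extension is automatically a morphism of group schemes because the group-scheme axioms are closed conditions and they hold on the dense open $\g^{\text{reg}}$. Finally, the $\mathbb{G}_m$-equivariance is obtained by working with the twisted $\mathbb{G}_m$-action on $\g$ coming from the semisimple element of the $\mathfrak{sl}_2$-triple that defines the Kostant section: this twisted action preserves $\g^{\text{reg}}$ and makes $\epsilon$ equivariant with respect to the scalar action on $\gc$, whence $\sJ$ carries a lifted $\mathbb{G}_m$-action and $\phi$ is equivariant on $\g^{\text{reg}}$; uniqueness of the extension forces equivariance on all of $\g$. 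The trickiest technical point is guaranteeing that all local trivializations used to promote the pointwise isomorphism to a morphism of group schemes glue consistently, which is where commutativity of the regular centralizer is indispensable.
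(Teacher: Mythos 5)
The paper does not actually prove this Proposition--Definition: it is imported verbatim as a citation of \cite[Lemma 2.1.1]{Ngo10}, so there is no in-paper argument to compare against. Your reconstruction is, in substance, exactly Ngo's original proof --- define $\sJ:=\epsilon^*\sI$ via the Kostant section, use commutativity of regular centralizers together with Kostant's theorem that the regular elements in each fiber of $\chi$ form a single $G$-orbit to descend/construct the canonical isomorphism $\chi^*\sJ\simeq\sI$ over $\g^{reg}$, and then extend to all of $\g$ by normality of $\chi^*\sJ$, affineness of $\sI\to\g$, and the codimension bound on $\g\ssm\g^{reg}$ (codimension $\ge 2$ already suffices for the Hartogs step) --- and I consider it correct in outline. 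Two small technical corrections. First, the torsor $\{g:\Ad_g\epsilon(c)=x\}$ under $Z_G(\epsilon(c))$ is trivialized only \'etale-locally (or smooth-locally), not Zariski-locally in general; since the descended homomorphism is independent of the chosen section by commutativity, \'etale descent closes this gap, but the word ``Zariski'' should be replaced. Second, twisting by the semisimple element $h$ of the $\mathfrak{sl}_2$-triple makes $\epsilon$ equivariant for the \emph{square} of the weight-$(1+m_i)$ action on $\gc$ (since $x^+$ sits in degree $2$ for the $h$-grading); to lift the action itself one should twist by $\check\rho$ regarded as a cocharacter of the adjoint torus, which is legitimate because only the adjoint action of $\mathbb{G}_m$ on $\g$ is used. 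Neither point affects the architecture of the argument.
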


Observe that  $\sJ\times_{\mathbb{G}_{m}}\!\sL^{\times}$ is a commutative smooth group scheme over $\gc_\sL$.  Hence for every  $a\in \sA$, i.e.,  section $a:C\rightarrow \gc_{\sL}$, the pull-back 
\begin{equation}
\sJ_{a}:=a^{*}(\sJ\times_{\mathbb{G}_{m}}\!\sL)
\end{equation}
is a commutative group scheme over $C$.

As $\gt_{\sL}\rightarrow\gc_{\sL}$ is $W$-covering, similarly to the construction of $\sJ_{a}$, we can define so-called cameral cover.

\begin{definition}[cameral cover]\label{def:cam cover}
	Given a point $a\in\sA$, the \emph{cameral cover} $\tilde{C}_{a}$ of $C$ is defined as follows:
	\begin{equation}
	\tilde{C}_{a}:=C\times_{\gc_{\sL}}\gt_{\sL}
	\end{equation}
\end{definition}

As $\mathscr{D}$ is $\mathbb{G}_{m}$-invariant, so is its complement. Then $\mathscr{D}_{\sL}$ is a divisor of $\gc_{\sL}$ and we denote its complement in $\gc_{\sL}$ by $\gc_{\sL}^{rs}$.

\begin{lemma}
	Let $\sA^{\heartsuit}\subset \sA$ be the subset of $a\in\sA$ such that image of the generic point of $C$ by $a$ is contained in $\gc^{rs}_{\sL}$. Then $\sA^{\heartsuit}$ is an open subvariety of $\sA$. For every $a\in\sA^{\heartsuit}$, the corresponding cameral curve $\tilde{C}_{a}$ is generically a $W$-torsor over $C$ and is reduced.
\end{lemma}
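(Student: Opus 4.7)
The plan is to address the three claims in sequence: openness (with non-emptiness) of $\sA^{\heartsuit}$, the generic $W$-torsor structure of $\tilde C_a$, and its reducedness.

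For openness, I would form the tautological evaluation morphism
\[ \mathrm{ev}\colon \sA\times C\to \gc_\sL,\qquad (a,x)\mapsto a(x), \]
and let $Z:=\mathrm{ev}^{-1}(\mathscr{D}_\sL)\subset \sA\times C$. The fiber of the projection $Z\to\sA$ over $a$ is exactly $a^{-1}(\mathscr{D}_\sL)\subset C$, and failure of $a\in\sA^{\heartsuit}$ amounts to this fiber being all of $C$, equivalently having dimension $\geq 1$. The complement of $\sA^{\heartsuit}$ is therefore closed by upper-semicontinuity of fiber dimension. For non-emptiness, the hypothesis on $\sL$ (either $\deg\sL>2g$, or $\sL=\omega_C$ with $2g-2>0$) makes each summand $\sL^{\otimes(1+m_i)}$ globally generated, so at any $x_0\in C$ the evaluation $\sA\to (\gc_\sL)_{x_0}$ is surjective, and one can pick $a$ whose value at $x_0$ avoids the proper closed subscheme $\mathscr{D}_\sL|_{x_0}$.

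For the $W$-torsor assertion, I would twist the Cartesian square (\ref{rs cartesian}) along $C$: the map $\gt_\sL^{rs}\to \gc_\sL^{rs}$ is still an étale $W$-torsor, and by definition of $\sA^{\heartsuit}$ the restriction $a|_{\eta_C}\colon\Spec K(C)\to\gc_\sL$ factors through $\gc_\sL^{rs}$. Pulling back the $W$-torsor along this map gives the generic fiber of $\tilde C_a\to C$, which is therefore a $W$-torsor over $\Spec K(C)$.

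For reducedness, the key structural input is that under Assumption \ref{blas} we have $p\nmid |W|$, so by the Chevalley--Shephard--Todd theorem combined with miracle flatness, $\gt\to\gc$ is finite and flat of rank $|W|$, with $k[\gt]$ in fact free over $k[\gt]^{W}$. Twisting by $\sL$ and base-changing along $a$, the map $\tilde C_a\to C$ is finite flat of constant rank $|W|$. Flatness over the integral regular curve $C$ forces every associated prime of $\sO_{\tilde C_a}$ to lie over $\eta_C$, so $\tilde C_a$ has no embedded components (Serre's condition $(S_1)$) and every irreducible component dominates $C$. By the previous paragraph the fiber over $\eta_C$ is étale and hence reduced, giving $(R_0)$. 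Serre's criterion $(R_0)+(S_1)$ then yields reducedness. The most delicate step is this reducedness argument, where I must cleanly combine flatness of $\tilde C_a\to C$ (to ensure $(S_1)$ and to force components to dominate $C$) with étaleness of the generic fiber (to get $(R_0)$); in particular, I will want to invoke the correct characteristic-free form of freeness of $k[\gt]$ over $k[\gt]^{W}$ to justify the finite flatness of $\gt\to\gc$.
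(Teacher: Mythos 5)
Your proposal is correct and follows essentially the same route as the paper: openness comes from the openness of $\gc^{rs}$ in $\gc$, the generic $W$-torsor structure comes from pulling back the \'etale $W$-cover $\gt^{rs}_\sL\to\gc^{rs}_\sL$ along $a$ restricted to the generic point, and reducedness comes from flatness of $\gt_\sL\to\gc_\sL$ (hence of $\tilde C_a\to C$) combined with reducedness of the dense open locus $\tilde U_a$. Your write-up merely makes explicit what the paper leaves implicit — the upper-semicontinuity argument for openness and the $(R_0)+(S_1)$ justification that a generically reduced scheme, flat and finite over a regular curve, is reduced — so there is no substantive difference in method.
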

\begin{proof}
	The first assertion follows from the fact that $\gc^{rs}$ is open in $\gc$.
	The Kostant section shows that for every $a\in\sA^{\heartsuit}$, the preimage $h^{-1}(a)$ is non-empty.
	
	Consider the following Cartesian diagram:
	\begin{equation}
	\xymatrix{
		\tilde{\g}^{rs}\ar[r]^{\tilde{\chi}}\ar[d]^{\pi^{reg}}&\gt^{rs}\ar[d]^{\pi}&\\
		\g^{rs}\ar[r]^{\chi}&\gc^{rs}
	}
	\end{equation}
	Let $U$ be an open subset of $C$ and $a:U\rightarrow \gc^{rs}$, then:
	\begin{equation}
	\tilde{U}_{a}:=U\times_{\gc^{rs}_{\sL}}\gt^{rs}_{\sL}
	\end{equation}
	We know that $\tilde{U}_{a}$ is a $W$-torsor over $U$, thus $\tilde{U}$ is reduced. Since $\gt_{\sL}\rightarrow\gc_{\sL}$ is flat, we know that $\tilde{C}_{a}\rightarrow C$ is also flat. Thus $\tilde{U}_{a}$ is open and dense in $\tilde{C}_{a}$. As a result, $\tilde{C}_{a}$ is reduced. 
\end{proof}


To study generic fibers of Hitchin maps, we usually focus on fibers over certain open subvarieties of $\sA$. In the following we will introduce some of them following Ngo's definitions in \cite{Ngo10}. Since $\mathscr{D}$ is a $\mathbb{G}_{m}$-invariant divisor  in $\gc$, $\mathscr{D}_{\sL}$ is a divisor in $\gc_{\sL}$. Similarly, $\mathscr{D}^{sing}_{\sL}$ is a closed subvariety of $\mathscr{D}_{\sL}$, here $\mathscr{D}^{sing}$ is the subvariety of singular points of $\mathscr{D}$. 

\begin{definition}
	We say $a$ maps $C$ transversely with the reduced divisor $\mathscr{D}_{\sL}$ in $\gc_{\sL}$ if $a^*\mathscr{D}_{\sL}$ is a reduced divisor. This in particular implies the following:
	\begin{itemize}
		\item[(a)] $a$ maps generic point of $C$ into $\gc^{rs}$
		\item[(b)] if for some closed point $y\in C$, $a(y)\in \mathscr{D}$, then $a(y)$ does not lie in $\mathscr{D}^{sing}$, and the intersection number of $a(V)$ and $\mathscr{D}^{sm}$ is one, for a small neighbourhood of $y$.
	\end{itemize}
\end{definition}

\begin{proposition}\label{Diamond}
	Recall that we assume $\deg\sL>2g$, or $\sL=\omega_{C}$. Then the set of $a\in \sA$ which are transversal to $\mathscr{D}_\sL$ make up
	a nonempty open subset $\sA^{\Diamond}$ of $\sA^{\heartsuit}$.  
	For $a\in\sA^{\heartsuit}$, the cameral cover $\tilde{C}_{a}$ is smooth if and only if $a\in\sA^{\Diamond}$.
\end{proposition}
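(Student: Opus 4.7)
The proof splits into three independent parts: openness of $\sA^\Diamond$ in $\sA$, its non-emptiness, and the characterization via smoothness of $\tilde{C}_a$. Openness is immediate from incidence geometry: consider the universal evaluation map $\mathrm{ev}\colon \sA\times C \to \gc_\sL$, $(a,y)\mapsto a(y)$. The locus in $\sA\times C$ where $\mathrm{ev}$ either meets $\mathscr{D}^{sing}_\sL$ or fails to be transverse to $\mathscr{D}^{sm}_\sL$ (i.e.\ where the composite $T_{(a,y)}(\sA\times C)\to T_{\mathrm{ev}(a,y)}\gc_\sL \to N_{\mathscr{D}^{sm}_\sL/\gc_\sL}$ drops rank) is closed. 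Since $C$ is proper, its projection to $\sA$ is closed, and its complement is exactly $\sA^\Diamond$. The inclusion $\sA^\Diamond \subseteq \sA^\heartsuit$ is built into the defining condition (a).

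For non-emptiness I would apply a Bertini-type transversality argument to $\mathscr{D}_\sL \subset \gc_\sL$ using the linear system $\sA=\bigoplus_{i=1}^n H^0(C,\sL^{\otimes(1+m_i)})$. The hypothesis on $\sL$ enters exactly here: under either $\deg\sL > 2g$, or $\sL=\omega_C$ with $g(C)\geq 2$, Serre duality gives $H^1(C,\sL^{\otimes(1+m_i)}(-y-y'))=0$ for all (possibly infinitely near) $y,y'\in C$, and so $\sA$ is base-point-free and separates $2$-jets on $C$. Then a dimension count on the incidence variety $\{(a,y):a(y)\in \mathscr{D}^{sing}_\sL\}$, whose fiber over each $y$ has codimension $\geq 2$ in $\sA$ (since $\mathscr{D}^{sing}$ has codimension $\geq 2$ in $\gc$ by reducedness of $\mathscr{D}$), forces its image in $\sA$ to have codimension $\geq 1$; the analogous count for the non-transversality locus yields the same conclusion, whence a generic $a\in\sA$ lies in $\sA^\Diamond$.

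Finally, for $a\in\sA^\heartsuit$, the equivalence of smoothness of $\tilde{C}_a$ with $a\in\sA^\Diamond$ is local at each point $y\in C$. Above $y$ with $a(y)\in\gc^{rs}_\sL$, the map $\gt_\sL\to\gc_\sL$ is \'etale, hence so is $\tilde{C}_a\to C$ there. Above $y$ with $a(y)\in\mathscr{D}^{sm}_\sL$, exactly one $W$-orbit of reflection hyperplanes $H_\alpha\subset \gt$ maps onto the local branch of $\mathscr{D}$, and by the slice theorem for $W$ acting on $\gt$ the map $\gt_\sL\to\gc_\sL$ has the local form $(v_1,\dots,v_{n-1},u)\mapsto (v_1,\dots,v_{n-1},u^2)$ at a point of $H_\alpha$. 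Pulling back along $a$, the local equation of $\tilde{C}_a$ at the ramification point is $u^2 = f(t)$, where $t$ is a uniformizer at $y$ and $f$ is the pullback of a local equation for $\mathscr{D}_\sL$; this is smooth precisely when $f$ has a simple zero, i.e.\ exactly the transversality condition. Conversely, if $a(y)\in\mathscr{D}^{sing}_\sL$ the map $\gt_\sL\to\gc_\sL$ is itself singular above $a(y)$ and the fiber product acquires a singularity above $y$.

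The step I expect to require the most care is the Bertini argument: codimension control of $\mathscr{D}^{sing}_\sL$ depends on the reducedness of $\mathscr{D}$, and extracting transversality (rather than mere avoidance) in positive characteristic forces one to use the $2$-jet separation above rather than a naive generic smoothness statement. The local analysis around $\mathscr{D}^{sm}_\sL$ is clean once the slice normal form is in hand, and the argument at $\mathscr{D}^{sing}_\sL$ only needs to produce some singularity, which is automatic.
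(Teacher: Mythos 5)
Your proposal is correct and follows essentially the same route as the paper, which delegates the work to Ngo's Propositions 4.7.1 and 4.7.3: the one computation the paper actually performs is the surjectivity of the $2$-jet evaluation map $H^0(C,\gc_\sL)\to\gc_\sL\otimes\sO_C/m_y^2$ using $\gc_\sL\simeq\oplus_i\sL^{\otimes(1+m_i)}$ and $m_1>0$, which is precisely your jet-separation step, and your incidence-variety dimension count, properness argument for openness, and local normal form $u^2=f(t)$ along $\mathscr{D}^{sm}_\sL$ are exactly the content of Ngo's proof that the paper invokes. The only caveat, shared equally by the paper's argument, is that for $\sL=\omega_C$ and $g=2$ the bundle $\omega_C^{\otimes 2}$ has degree $2g$ and fails to separate $2$-jets at Weierstrass points, so the claimed vanishing $H^1(C,\sL^{\otimes(1+m_i)}(-y-y'))=0$ needs a supplementary argument there.
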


This proposition is proved by Ngo in \cite{Ngo10} for $\deg \sL>2g$, where it is divided into Proposition 4.7.1 and Lemma 4.7.3. Our proof is a direct application of arguments there.

\begin{proof}   	
	To prove  that $\sA^{\Diamond}$ is nonempty,  we follow Ngo's proof: we only need to check that the following map is surjective for any closed point $y\in C$:
	\begin{equation}
	H^{0}(C,\gc_{\sL})\rightarrow \gc_\sL\otimes\sO_{C}/m_{y}^{2}
	\end{equation}
	Let $m_1\le m_2\le \cdots \le m_n$ be the exponents of $G$. Since $m_1>0$,  the surjectivity follows from the following fact that 
	$\gc_{\sL}\simeq\oplus_{i=1}^{n}\sL^{\otimes(1+m_{i})}$.
	
	The second part is a local analysis, it also holds when $\sL=\omega_{C}$ by Ngo's proof.
\end{proof}
 	
\begin{theorem}\cite{DG02}\cite{Ngo10}\label{non-para generic fiber}
	The restriction of $h$ to $\Lhig_G^{reg}$ is smooth, with each fiber $h^{-1}(a)\cap\Lhig_G^{reg}$ a gerbe banded by Picard stack $\Pic(\sJ_{a})$. Moreover $\Lhig_G^{reg}$ coincides with $\Lhig_G$ over $\sA^{\Diamond}$.
\end{theorem}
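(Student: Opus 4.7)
The plan is to establish the three assertions in order: (i) smoothness of $h|_{\Lhig^{reg}}$, (ii) the gerbe-banded-by-$\Pic(\sJ_a)$ structure on $h^{-1}(a)\cap \Lhig^{reg}$, and (iii) the coincidence $\Lhig^{reg}=\Lhig$ over $\sA^{\Diamond}$. Throughout I shall use the identification $\Lhig \simeq \Hom_{C}(C, [\g\otimes\sL/G])$ together with the universal regular centralizer $\sJ$ and the morphism $\phi:\chi^{*}\sJ \to \sI$ from Proposition-Definition \ref{Univ cen}.

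For (i), the key observation is that the morphism of stacks $[\g^{reg}\otimes\sL/G]\to \gc_{\sL}$ over $C$ is smooth: the underlying map $\chi^{reg}$ is smooth by Veldkamp's theorem, and the stabilizers $Z_{G}(x)$ for $x\in\g^{reg}$ are smooth of constant rank $\rk G$. Applying $\Hom_{C}(C,-)$ and a standard deformation-theoretic argument (where the obstructions live in $H^{1}$ of the pullback of the relative tangent complex and are controlled because the fibers of $\chi^{reg}$ have constant dimension) yields the smoothness of $h|_{\Lhig^{reg}}$ itself.

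For (ii), fix $a\in \sA^{\heartsuit}$. The Kostant section $\epsilon$ supplies a distinguished regular Higgs bundle $(E_{0},\theta_{0})\in h^{-1}(a)$, so the fiber is nonempty. For any $(E,\theta)$ in the fiber, $\phi$ specializes to an isomorphism identifying $\sJ_{a}$ with the automorphism group scheme $E(\sI)|_{\theta}$. I define the action of $\Pic(\sJ_{a})$ by twisting: given a $\sJ_{a}$-torsor $P$, push it forward along $\phi$ to an $E(\sI)|_{\theta}$-torsor and twist $(E,\theta)$ by it to obtain a new regular Higgs bundle, whose image under $h$ is still $a$. To see the action is simply transitive, any two regular Higgs bundles in $h^{-1}(a)$ are \'etale-locally isomorphic on $C$ (by the smoothness established in (i), together with the fact that the fibers of $\chi^{reg}$ are single $G$-orbits), and the cocycle measuring the discrepancy of such local isomorphisms descends via $\phi$ to a canonical $\sJ_{a}$-torsor; the automorphism group of any object is $H^{0}(C,\sJ_{a})$, consistent with the banding.

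For (iii), let $a\in \sA^{\Diamond}$ and $(E,\theta)\in h^{-1}(a)$. Over $C\setminus a^{-1}(\mathscr{D}_{\sL})$ we automatically have $\theta\in \g^{rs}\subset \g^{reg}$, so regularity need only be checked at points $y$ with $a(y)\in\mathscr{D}_{\sL}$. Transversality of $a$ ensures $a(y)\in \mathscr{D}^{sm}_{\sL}$ with intersection number one; combined with the smoothness of the cameral cover $\tilde{C}_{a}$ from Proposition \ref{Diamond}, a local analysis using the Grothendieck simultaneous resolution in diagram \eqref{Gro Sim Res} (where a lift of $\theta$ along $\tilde{\chi}$ exists over a neighborhood of $y$) forces $\theta(y)$ into $\g^{reg}$. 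The main obstacle I anticipate is the rigorous construction of the simply transitive action in step (ii): one must descend \'etale-local isomorphisms of pairs $(E,\theta)$ to a well-defined $\sJ_{a}$-torsor using $\phi$ and verify that the resulting twist is compatible with the Higgs field. This technical point is the heart of the argument and follows Ngo's treatment in \cite{Ngo10}.
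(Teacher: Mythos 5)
Your outline follows essentially the same route the paper takes, namely deferring to Ngo's Propositions 4.3.3, 4.3.5, 4.7.7 and Donagi--Gaitsgory's Corollary 17.6: smoothness because $[\g^{reg}_{\sL}/G]\rightarrow\gc_{\sL}$ is a gerbe banded by the smooth group scheme $\sJ$ (so the relative obstruction group is $H^{2}(C,\Lie \sJ_{a})=0$ on a curve), the banding via the canonical identification of $\sJ_{a}$ with the centralizer group scheme of any regular $\theta$ through $\phi$, and regularity over $\sA^{\Diamond}$ by a local analysis at the discriminant. The one imprecise point is your parenthetical justification in step (iii): a lift of $\theta$ to $\tilde{\g}$ exists locally whether or not $\theta(y)$ is regular (the map $\tilde{\g}\rightarrow\g$ is proper and surjective), so its existence cannot force regularity; the actual mechanism in \cite{Ngo10} and \cite{DG02} is that non-regularity of $\theta(y)$ would force $a^{*}\mathscr{D}_{\sL}$ to vanish to order at least $2$ at $y$, contradicting the transversality defining $\sA^{\Diamond}$.
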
 
%
This theorem follows from Proposition 4.3.3, Proposition 4.3.5 and Proposition 4.7.7 in \cite{Ngo10} where it is assumed that $\deg\sL>2g$, but in our case the canonical line bundle also works. The second assertion can also be deduced from Donagi and Gaitsgory \cite[Corollary 17.6]{DG02}. We will discuss the case $G=\SL_{n}$ to provide a clear picture.
\begin{example}    	 
	When $G=\SL_{n}$, the concept of `cameral curve' can be replaced by `spectral curve', which is more intuitive. For $a\in\sA$, it determines the characteristic polynomial of $\theta$. Thus $a\in\sA$ defines a Cartier divisor of the surface $\sL$, where we treat the total space of $\sL$ as a surface. We may denote the divisor by $C_{a}$.  If $a\in\sA^{\Diamond}$, the corresponding spectral curve $\pi:C_{a}\rightarrow C$ is smooth, and the pair $(E,\theta)$ can be identified with a line bundle $\sM$ over $C_{a}$ (we refer  to \cite{BNR} for more details). Then the Higgs field $\theta$ will come from the $\sO_{C_{a}}$ module structure of $\sM$. By the local description, we know that $\theta$ is regular everywhere!  In this case $\sJ_{a}$  is exactly the group scheme $\pi_{*}\sO_{C_{a}}^{\times}$ over $C$. In  this case, the above theorem asserts that $\Pic^{0}(C_{a})$ acts simply and transitively on $h^{-1}(a)$.
\end{example}

In this section, we gave a quick introduction of Hitchin systems. Theorem \ref{non-para generic fiber} provides a formal description of generics fibers of Hitchin systems. In order to generalize that theorem  to the parahoric case, we need to introduce Bruhat-Tits group schemes over $C$. This 
will in particular make Heinloth's uniformization theorem \cite{Hein10} available to us. This is the subject of the next section.

\section{Parahoric Higgs Bundles}\label{Section: para higgs}

In this section, we will first talk about \emph{Bruhat-Tits} group schemes over $C$ which is constructed via parahoric data. To be more precise, given a parahoric data $(D,\sF)$, we construct a smooth affine group scheme $\sG_{\sF}$ over $C$. This is a special case of the monumental work of Bruhat and Tits \cite{BT72}, \cite{Tits79},\cite{BT84}. But for readers' convenience, we include a brief introduction of Tits systems, Tits buildings leading to the Proposition \ref{prop:conj of para and alcove} which we actually use to define the so-called \emph{parahoric data}. 

Then we will introduce the definition of $\sG_{\sF}$-Higgs bundles, and show that the moduli functor of $\sG_{\sF}$-Higgs bundles over the curve $C$ is represented by an algebraic stack. And we will also introduce the Hitchin map. 

We should mention the pioneering work of Bhosle and Ramanathan \cite{BR89}, where they studied the moduli of stable parabolic principal bundles over Riemann surfaces and of  Balaji and Seshadri \cite{BS14} who studied parahoric principal bundles over Riemann surfaces.

As always, $C$ is a smooth projective curve over an algebraically closed field $k$ such that $2g(C)-2+\deg D>0$. In this section,
\emph{we assume $G$ to be a semi-simple simply connected algebraic group over $k$}. We fix a maximal torus $T\subset G$, and a Borel subgroup $B$ containing $T$. 
\subsection{Bruhat-Tits Group Scheme over $\sO$}\label{section:BToverO}

We first recall what a Tits system is for an abstract group $G$, we refer to Chapter IV in \cite{Bour02}, or in Chapter 11 in \cite{MT11}. 

\subsubsection{\textbf{Tits System}}\label{Section: T Sys}

We do not need to assume $G$ to be an algebraic group for the present. 
\begin{definition}[Tits system]\label{Tits sys}
	A \emph{Tits  system} is a triple $(G,B,N)$, where $G$ is a group, $B,N$ are subgroups of $G$ satisfying the following conditions:
	\begin{enumerate}
		\item[(T1)] $B\cup N$ generates $G$ and $B\cap N$ is a normal subgroup of $N$;
		\item[(T2)] $W=N/(B\cap N)$ is generated by a system of  involutive elements which we denote by $S$;
		\item[(T3)] $sBw\subset BwB\cup BswB$ for any $s\in S, w\in W$;
		\item[(T4)] for $s\in S$, $sBs^{-1}\nsubseteq B$.
	\end{enumerate}    	
\end{definition}
We shall write  $T$ for $B\cap N$ and $W$ for $N/B\cap N$. We refer to $W$ as the \emph{Weyl group} of the Tits system.
For any $w\in W$, we write $C(w)$ for the double coset $BwB$. It is clear that for $w, w'\in W$, 
$C(ww')\subset C(w)\cdot C(w')$. Notice that (T3) is equivalent to 
\begin{equation}
C(s)\cdot C(w)\subset C(w)\cup C(sw)
\end{equation}

Now we recall the following theorem about  Coxeter subsystems:
\begin{theorem}\label{rel in Cox}
	The set $S$ is a minimal set of generators of $W$ and if $X, X'\subset S$, then the pair $(W_{X},X)$ is a Coxeter system and
	$W_{X\cap X'}=W_X\cap W_{X'}$. In particular, $W_{X}\subset W_{X'}$ if and only if $X\subset X'$. 
\end{theorem}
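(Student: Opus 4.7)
The plan is to follow the classical path through Bruhat decomposition and the exchange condition, along the lines of Bourbaki's treatment in Chapter IV of \cite{Bour02} which is cited just above the statement.

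First I would establish the Bruhat decomposition $G = \bigsqcup_{w \in W} C(w)$ where $C(w) = BwB$. By (T1), every element of $G$ lies in some product $C(s_1) \cdots C(s_k)$ with $s_i \in S$; iterating (T3) gives $C(s_1) \cdots C(s_k) \subseteq \bigcup_{w \in W} C(w)$, so $G = \bigcup_{w} C(w)$. Disjointness of the $C(w)$ is proved by a double induction on $\ell(w) + \ell(w')$, with (T4) providing the base obstruction, where $\ell : W \to \mathbb{Z}_{\geq 0}$ is the shortest word length in $S$. Simultaneously one refines (T3) to: $C(w) C(s) = C(ws)$ if $\ell(ws) > \ell(w)$, and $C(w) C(s) = C(w) \cup C(ws)$ otherwise.

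Next I would derive the exchange condition from this refinement: if $w = s_1 \cdots s_k$ is reduced and $s \in S$ satisfies $\ell(ws) \leq \ell(w)$, then $ws = s_1 \cdots \widehat{s_i} \cdots s_k$ for some $i$. By Tits's theorem the exchange condition is equivalent to $(W,S)$ being a Coxeter system, so this step simultaneously delivers the Coxeter structure on $(W,S)$. Minimality of $S$ follows easily: each $s \in S$ has $\ell(s) = 1$, while (T4) together with the Bruhat decomposition implies $s \notin B \cdot \langle S \setminus \{s\}\rangle \cdot B$, hence $s \notin \langle S \setminus \{s\}\rangle$.

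For the parabolic statement, fix $X \subseteq S$, let $N_X = \pi^{-1}(W_X)$ with $\pi : N \twoheadrightarrow W$, and set $G_X = \bigsqcup_{w \in W_X} C(w)$. The refinement of (T3) shows $G_X$ is a subgroup of $G$, and the restricted data $(G_X, B, N_X)$ then manifestly satisfies (T1)--(T4) with Weyl group $(W_X, X)$; in particular $(W_X, X)$ is a Coxeter system with its own length function that agrees with the restriction of $\ell$. For $W_{X \cap X'} = W_X \cap W_{X'}$, the inclusion $\subseteq$ is immediate. For $\supseteq$, given $w \in W_X \cap W_{X'}$, take a reduced expression $w = s_1 \cdots s_k$ in $(W,S)$; applying the exchange condition inside the Coxeter system $(W_X, X)$ (using that $\ell_X$ matches $\ell$) forces every $s_i \in X$, and the symmetric argument in $(W_{X'}, X')$ gives each $s_i \in X'$, whence $w \in W_{X \cap X'}$. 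The final equivalence $W_X \subseteq W_{X'} \iff X \subseteq X'$ is then immediate: assuming $W_X \subseteq W_{X'}$, $W_X = W_X \cap W_{X'} = W_{X \cap X'}$, and comparing generators yields $X = X \cap X' \subseteq X'$.

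The main obstacle is precisely the step $\supseteq$ in the intersection identity: one must know that a reduced expression of $w \in W_X$, taken \emph{a priori} in the ambient $(W, S)$, actually uses only letters from $X$. This rigidity is not formal; it rests on the coincidence of $\ell_X$ with $\ell |_{W_X}$, which in turn comes from the Bruhat decomposition of $G_X$ established as a sub-Tits-system. This is the technical core of the argument.
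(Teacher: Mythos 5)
The paper does not actually prove this statement: it is recalled as a standard fact about Tits systems and Coxeter groups from Bourbaki, Ch.~IV, \S 1--2 (equivalently Malle--Testerman, Ch.~11), which is precisely the source you are reconstructing, so there is no in-paper argument to compare against. Your outline follows that classical route and is essentially correct: Bruhat decomposition with disjointness of cells, the refinement $C(w)C(s)=C(ws)$ or $C(w)\cup C(ws)$ according to lengths, the exchange condition together with Tits's characterization of Coxeter systems, and the sub-Tits-system $(G_X,B,N_X)$ with Weyl group $(W_X,X)$. Two spots need repair. First, your minimality argument is circular as written: by disjointness of Bruhat cells, $s\in B\langle S\setminus\{s\}\rangle B$ holds \emph{if and only if} $s\in W_{S\setminus\{s\}}$, so you cannot use the former to decide the latter, and (T4) only separates $C(s)$ from $B$, not from the other cells of $G_{S\setminus\{s\}}$. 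The correct route is the deletion condition you have already derived: if $s$ were a word in $S\setminus\{s\}$, deletion would yield a reduced word for $s$ in those letters, necessarily of length one, forcing $s\in S\setminus\{s\}$, a contradiction. Second, in the step forcing every letter of a reduced expression of $w\in W_X$ to lie in $X$, the exchange condition should be applied in the ambient system $(W,S)$, not ``inside $(W_X,X)$'': the relevant lemma is that the set of generators occurring in a reduced word depends only on $w$ (Bourbaki IV.1.8), combined with the fact that $w\in W_X$ admits \emph{some} reduced word with letters in $X$ (again by deletion, which also yields $\ell_X=\ell|_{W_X}$). With these local repairs, both drawn from machinery already in your sketch, the argument is complete.
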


For $X\subset S$, we put $G_{X}:=BW_{X}B=\cup_{x\in X} C(x)$. This is easily seen to be a subgroup of $G$.

\begin{theorem}\label{bij in Tits sys} The passage from a subset $X$ of $S$ to the subgroup $G_X$ of $G$ defines a bijection 
	between the subsets of $S$ and the subgroups in $G$ containing $B$.  We have  $G_{X\cap X'}=G_X\cap G_{X'}$, in particular, $G_{X}\subset G_{X'}$ if and only if $X\subset X'$.
	
	An element  $w\in W$ lies in $S$ if and only if $B\cup C(w)$ is a subgroup of $G$. 
\end{theorem}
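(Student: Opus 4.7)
The plan is to first verify that $G_X := BW_XB = \bigcup_{w\in W_X} C(w)$ is genuinely a subgroup of $G$. Closure under inversion is clear since $W_X$ is stable under inversion. For closure under multiplication, I would show $C(s)\cdot G_X \subset G_X$ for every $s\in X$ using axiom (T3): $C(s)\cdot C(w) \subset C(w)\cup C(sw)$, and since $W_X$ is stable under left-multiplication by $s\in X$, both pieces land in $G_X$. Induction on the length $\ell_X(w)$ of $w\in W_X$ relative to the Coxeter generators $X$ (Theorem \ref{rel in Cox}) upgrades this to $C(w)\cdot G_X \subset G_X$, proving $G_X\cdot G_X \subset G_X$.

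Next I would establish the bijection. For injectivity, note that the disjoint-union decomposition $G_X = \coprod_{w\in W_X} C(w)$ implies $(G_X\cap N)/T = W_X$; combined with Theorem \ref{rel in Cox}, which says $W_X=W_{X'}$ forces $X=X'$, this shows distinct subsets $X$ yield distinct $G_X$. For surjectivity, given a subgroup $H$ with $B\subset H\subset G$, set $W_H := (H\cap N)/T$ and $X_H := W_H\cap S$. The key step is to prove $W_H = W_{X_H}$: pick $w\in W_H$ and a reduced expression $w = s_1\cdots s_r$ over $S$, then using (T3)--(T4) together with the fact that $BwB\subset H$ forces each $Bs_iB\subset H$, one concludes $s_i\in W_H$, hence $s_i\in X_H$. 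Finally, every element of $H$ lies in some $C(w)$, and the representative $w$ can be chosen in $H\cap N$, giving $H = BW_HB = G_{X_H}$.

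Once the bijection is in place, the identity $G_{X\cap X'}=G_X\cap G_{X'}$ is immediate: both sides are subgroups containing $B$, and applying $\cdot\cap N/T$ to each and invoking $W_{X\cap X'}=W_X\cap W_{X'}$ from Theorem \ref{rel in Cox} shows they correspond to the same subset of $S$. The monotonicity $G_X\subset G_{X'}\iff X\subset X'$ follows formally. For the final claim, if $w\in S$ then $W_{\{w\}}=\{1,w\}$ and $G_{\{w\}}=B\cup C(w)$ is a subgroup by the first step. Conversely, if $H := B\cup C(w)$ is a subgroup, then by the bijection $H=G_X$ for a unique $X$, and the double-coset decomposition forces $W_X=\{1,w\}$; such a Coxeter group has a single generator, which must be $w$, so $w\in S$.

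The main obstacle I anticipate is the surjectivity step, specifically proving that $W_H$ is generated by $W_H\cap S$. This is the deepest use of the Tits axioms: it requires combining (T3) and (T4) to detect, from the mere presence of a double coset $C(w)\subset H$, that each simple reflection in a reduced expression for $w$ is independently forced into $H$. Everything else is bookkeeping with the Bruhat decomposition and the Coxeter-theoretic input of Theorem \ref{rel in Cox}.
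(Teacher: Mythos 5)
The paper does not actually prove this theorem: it is recalled as standard Tits-system theory (the section defers to Bourbaki \cite{Bour02} and Malle--Testerman \cite{MT11}, and the text only remarks that $G_X$ ``is easily seen to be a subgroup''). Your outline is precisely the classical Bourbaki argument (Ch.~IV, \S 2.5), and every step you describe is correct and completable: the induction via $C(s)\cdot C(w)\subset C(w)\cup C(sw)$ for the subgroup property, injectivity via the disjointness of the Bruhat decomposition together with $(G_X\cap N)/(B\cap N)=W_X$ and Theorem \ref{rel in Cox}, and the formal deduction of $G_{X\cap X'}=G_X\cap G_{X'}$ and of the criterion for $w\in S$. (Note in passing that your argument, like Bourbaki's, quietly excludes $w=1$ in the last claim, and that it also uses the disjointness of $G=\coprod_w C(w)$, which is itself a theorem about Tits systems rather than an axiom.)

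The one load-bearing step you assert rather than prove is exactly the one you flag: that $C(w)\subset H$ forces $C(s_i)\subset H$ for each letter of a reduced word $w=s_1\cdots s_r$. To close this you need the lemma that if $\ell(sw)<\ell(w)$ then $C(s)\subset C(w)\,C(w^{-1})$. This is where (T4) enters: it gives $C(s)\,C(s)=B\cup C(s)$ (since $sBs\not\subset B$ produces an element of $C(s)\,C(s)$ outside $B$, and the product is a union of double cosets containing $B$), and then writing $C(w)=C(s)\,C(sw)$ (lengths add) one gets $C(w)\,C(w^{-1})\supset C(s)\,B\,C(s)=C(s)\,C(s)\supset C(s)$. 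Applied with $H\supset C(w)\cup C(w^{-1})$ this yields $C(s_1)\subset H$, then $C(s_1w)\subset H$ because $C(s_1)C(w)$ meets $C(s_1w)$ and $H$ is a union of double cosets, and induction finishes. With that lemma supplied, your proposal is a complete and correct proof of the statement.
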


The last assertion of  Theorem \ref{bij in Tits sys} shows that $S$ is determined by $(B,N)$, and  that $(G_{X},B,N_{X})$ is 
a Tits subsystem.

\begin{definition}
	A subgroup $P\subset G$ is called a \emph{parabolic subgroup} if it contains a conjugate of  $B$.
\end{definition}

There is a close relation between conjugacy classes of parabolic subgroups with sub-Coxeter systems.
\begin{theorem}\cite{MT11}\label{para} For a Tits system  $(G,B,N)$ the following holds.
	\begin{enumerate}
		\item[(i)] A subgroup $P$ of $G$ is a parabolic group, if and only if it is conjugate to $G_{X}$ for some $X\subset S$. Moreover $X$ is then unique.
		\item[(ii)] Let $Q$ and $Q'$  be subgroups of a parabolic subgroup $P$ of $G$ and let  $g\in G$ be such that $gQg^{-1}\subset Q'$. Then $g\in P$. In particular, $P$ is its own normalizer.
	\end{enumerate}
\end{theorem}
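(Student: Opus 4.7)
The plan is to prove (ii) first and deduce (i) from it. For (ii), by conjugating if necessary, we may assume $P = G_X$ for some $X \subset S$; this reduction is legitimate because the assertion is preserved under simultaneous conjugation of $(P, Q, Q', g)$. The strategy is then the Bruhat decomposition $G = \bigsqcup_{w \in W} BwB$: write $g \in B n_w B$ for some $w \in W$ (with $n_w$ a fixed representative), and aim to show $w \in W_X$, so that $g \in BwB \subset G_X$. The inclusion $gQg^{-1} \subset Q' \subset G_X$, together with the fact $G_X = \bigsqcup_{v \in W_X} BvB$ (Theorem~\ref{rel in Cox}), confines the Bruhat classes of the conjugate $gQg^{-1}$ to labels lying in $W_X$; iterating the one-step relation (T3), $sBw \subset BwB \cup BswB$, along a reduced expression for $w$ then forces $w$ itself into $W_X$. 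Taking $Q = Q' = P$ as a special case immediately yields $N_G(P) = P$.

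For part (i), the ``if'' direction is trivial since $G_X \supset B$, so $G_X$ and each of its conjugates is parabolic by definition. For the ``only if'' direction, any parabolic $P$ contains some conjugate $gBg^{-1}$; equivalently $g^{-1}Pg \supset B$, so Theorem~\ref{bij in Tits sys} identifies $g^{-1}Pg$ uniquely with $G_X$ for some $X \subset S$, giving $P = gG_Xg^{-1}$. Uniqueness of $X$ then uses (ii): if also $P = hG_{X'}h^{-1}$, set $k := h^{-1}g$, so $kG_Xk^{-1} = G_{X'}$. Both $B$ and $kBk^{-1}$ sit inside the parabolic $G_{X'}$, so applying (ii) with $P = G_{X'}$, $Q = B$, $Q' = kBk^{-1}$ yields $k \in G_{X'}$; hence $G_X = k^{-1}G_{X'}k = G_{X'}$, and Theorem~\ref{bij in Tits sys} concludes $X = X'$.

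The main obstacle I foresee is the combinatorial argument inside (ii), specifically the step that pins down the Bruhat label of $g$ to lie in $W_X$. Condition (T3) is only a one-step containment, and controlling the ``escape'' of $w$ from $W_X$ under iterated application along a reduced expression requires careful bookkeeping on the double cosets. Everything else --- the axioms (T1)--(T4), Theorems~\ref{rel in Cox} and \ref{bij in Tits sys}, and the existence of the Bruhat decomposition itself --- is already at our disposal.
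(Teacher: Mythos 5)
The paper offers no proof of this statement (it is quoted from \cite{MT11}), so your attempt has to stand on its own, and as written it does not. The fatal problem is in part (ii). Read literally, with $Q$ and $Q'$ arbitrary subgroups of $P$, the assertion is false: take $Q=Q'=\{1\}$, so that $gQg^{-1}\subset Q'$ holds for every $g\in G$, yet certainly not every $g$ lies in $P$. The statement is only true --- and is only ever stated, in Bourbaki Ch.~IV, \S 2 and in \cite{MT11} --- for $Q$ and $Q'$ \emph{parabolic} subgroups of $G$ contained in $P$, and the hypothesis that $Q$ contains a conjugate of $B$ is what the entire proof runs on. Your argument never invokes it: knowing that the Bruhat labels of the elements of $gQg^{-1}$ all lie in $W_X$ puts no constraint whatsoever on the Bruhat label of $g$ when $Q$ is small, so your ``confinement'' step is vacuous. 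The correct route is: choose $hBh^{-1}\subset Q$; since $hBh^{-1}\subset Q\subset P=G_X$, the key lemma ``$kBk^{-1}\subset G_X$ implies $k\in G_X$'' gives $h\in G_X$, and applying it again to $(gh)B(gh)^{-1}\subset gQg^{-1}\subset Q'\subset G_X$ gives $gh\in G_X$, hence $g\in G_X$.

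That key lemma is, moreover, exactly the step you defer as ``the main obstacle'': one writes $g\in Bn_wB$, deduces $n_wBn_w^{-1}\subset BW_XB$, and shows by induction on $\ell(w)$ (peeling a simple reflection off a reduced word and using $C(s)C(s)=B\cup C(s)$ together with the disjointness of the Bruhat decomposition) that $w\in W_X$. Since this double-coset computation is the entire mathematical content of (ii), and since the object it must be applied to is a conjugate of $B$ rather than the unconstrained $gQg^{-1}$ you feed into it, the proof is not complete. Your deduction of (i) from (ii) is sound --- and note that there you invoke (ii) with $Q=B$ and $Q'=kBk^{-1}$, which \emph{are} parabolic, so that part of your plan is consistent with the corrected statement.
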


\subsubsection{\textbf{The Tits building}}
The collection of proper parabolic subgroups of $G$ makes up a POset(PO for partial order). The \emph{Tits building} $\sB(G,B,N)$ associated to this Tits system is defined when $S$ is finite: if $n:=\#(S)$, then $\sB(G,B,N)$ is the simplicial complex whose barycentric subdivision realizes this POset: If $P\subset G$ is a proper parabolic subgroup conjugate to $G_X$, then it defines a simplex $\Delta_P$ of $\sB(G,B,N)$ of dimension $\#(X)=n-k$. This is inclusion-reversing: if $Q\subset G$ is a parabolic subgroup contained in $P$, then $\Delta_P$ is a facet of $\Delta_Q$. So $\Delta_B$ defines $(n-1)$-simplex (the fundamental  simplex), and if $s\in S$, then 
$G_s$ defines a codimension one face of this chamber and $G_{S\ssm \{ s\}}$ a vertex of it. It is clear that $\sB(G,B,N)$ comes with a $G$-action.

The proper parabolic subgroups of $G$ that contain $T$ are the $N$-conjugates of the standard proper parabolic subgroups. They define a subcomplex
of  $\sB(G,B,N)$ that is of course $N$-invariant. Note however that $N$ acts on this subcomplex via $W=N/T$. Indeed, this is the Coxeter complex of the pair $(W,S)$ that is defined in a similar manner as the Tits complex (by taking the $W$-conjugates of the proper standard Weyl subgroups $W_X\subsetneq W$). It is a combinatorial $(n-1)$-sphere when $W$ is finite and it is a combinatorial real affine $(n-1)$-space when the Tits system is of the type that we discuss in the next section (where $G$ is a reductive group over a local field). A $G$-translate of this subcomplex is called an \emph{apartment} of $\sB(G,B,N)$.

\subsubsection{\textbf{Conjugacy Classes of Parahoric Subgroups}}
We now assume $G$  is a semi-simple and simply connected group defined over $k$. Let $\sO$ be a DVR  isomorphic to $k[[t]]$ whose maximal ideal resp.\ field of fractions is denoted $m$ resp.\ 
$\sK$  (so $\sK$ is  isomorphic to $k((t))$). It is known that $\hat{G}:=G(\sK)$ is the set of $k$-rational points of the algebraic loop group associated with $G$, but for the moment we prefer to treat it as an abstract group. We recall how this group can be endowed  with a Tits system that makes use of the subgroup $G(\sO)$ and the evaluation map
\begin{equation}
ev:G(\sO)\rightarrow G(k)
\end{equation}
given by reduction modulo the maximal ideal. We shall compare this with the Tits system $(G, B, N)$ described above.

\begin{theorem}\cite{Iwa66}\label{Loop Tits system}
	Let $\hat{I}$  be the preimage of $B(k)$ under $ev$ and $\hat{N}:=N_{G}(T)(\sK)$. Then the triple $(\hat{G},\hat{I},\hat N)$ forms a Tits system.
\end{theorem}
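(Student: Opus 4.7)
The plan is to verify the four axioms (T1)--(T4) of Definition \ref{Tits sys} for the triple $(\hat{G},\hat{I},\hat{N})$. Most of the heavy lifting will come from an affine Bruhat decomposition for $\hat{G}$, which one establishes alongside the axioms.

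First, I would identify the intersection $\hat{T}:=\hat{I}\cap\hat{N}$. Since any element of $N_G(T)(\sK)$ which lies in $G(\sO)$ must, after reduction modulo the maximal ideal, land in $N_G(T)(k)$, and the preimage condition forces it into $B(k)\cap N_G(T)(k)=T(k)$, one obtains $\hat{T}=T(\sO)$. This is manifestly normal in $\hat{N}$ because $T$ is normal in $N_G(T)$. The quotient $\hat{W}:=\hat{N}/\hat{T}$ fits into a short exact sequence
\begin{equation}
1\longrightarrow X_{*}(T)\longrightarrow\hat{W}\longrightarrow W\longrightarrow 1,
\end{equation}
using the valuation $T(\sK)/T(\sO)\cong X_{*}(T)$; since $G$ is simply connected, $X_*(T)$ equals the coroot lattice $Q^{\vee}$ and the sequence splits as $\hat{W}\cong W\ltimes Q^{\vee}$, the affine Weyl group. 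This gives the Coxeter presentation: the generating set $\hat{S}$ consists of the reflections in the walls of the fundamental alcove $\Al$, namely the simple reflections of $W$ together with the affine reflection associated to the highest root. This realizes $(\hat{W},\hat{S})$ as the affine Coxeter system, verifying (T2).

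Next I would establish the affine Bruhat decomposition
\begin{equation}
\hat{G}=\bigsqcup_{w\in\hat{W}}\hat{I}\,\dot{w}\,\hat{I},
\end{equation}
where $\dot w$ is any lift of $w$ to $\hat{N}$. The generation part of (T1) follows immediately, since $\hat{I}$ together with $\hat{N}$ visits each double coset. To prove the decomposition I would combine two ingredients: (a) the Iwasawa/Cartan decomposition $\hat{G}=G(\sO)\cdot T(\sK)\cdot G(\sO)$ (or rather its refinement $\hat{G}=\bigsqcup_{\mu\in X_{*}(T)_{+}}G(\sO)\varpi^{\mu}G(\sO)$), and (b) the classical Bruhat decomposition for the reductive quotient $G(\sO)/\hat{I}=G(k)/B(k)$. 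Refining each piece $G(\sO)\varpi^{\mu}G(\sO)$ using $\hat{I}$-double cosets and using the filtration of $\hat{I}$ by the congruence subgroup $\ker(ev)$, which is pro-unipotent, gives the desired disjoint decomposition indexed by $\hat{W}$. Disjointness rests on the fact that two $\hat I$-double cosets coincide only if their indices in $\hat{W}$ coincide, which one checks using the valuation map and the $B(k)$-Bruhat decomposition.

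With the decomposition in hand, axioms (T3) and (T4) reduce to local computations at each simple affine reflection $s\in\hat{S}$. For (T3), one picks a root subgroup $U_{\alpha}\subset\hat{I}$ corresponding to the wall fixed by $s$ and a rank-one subgroup $\hat{G}_{s}=\langle U_{\alpha},U_{-\alpha},\dot s\rangle$ isomorphic (up to center) to $\SL_{2}(\sK)$ or the appropriate parahoric of it; inside this rank-one group the Bruhat relation
\begin{equation}
\hat{I}_{s}\dot{s}\hat{I}_{s}\cdot \hat{I}_{s}\,\subset\,\hat{I}_{s}\cup\hat{I}_{s}\dot{s}\hat{I}_{s}
\end{equation}
is elementary, and the general case follows by combining with $\hat{I}w\hat{I}\subset\hat{I}\hat{G}_{s}w\hat{I}$. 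Axiom (T4) is immediate: for $s\in\hat{S}$, conjugation by $\dot s$ sends the root subgroup $U_{\alpha}\subset\hat{I}$ to $U_{-\alpha}$, which visibly does not lie in $\hat{I}$. Finally, to complete (T1) one notes that $\hat{I}\cup\hat{N}$ already contains every $\hat{I}$-double coset representative, hence generates $\hat{G}$ by the Bruhat decomposition.

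The main obstacle is step two, the Bruhat decomposition itself: carrying out the reduction from $G(\sO)$-double cosets to $\hat{I}$-double cosets requires the pro-unipotent structure of $\ker(ev)\subset\hat{I}$ together with the affine root datum, and keeping track of the indexing by $\hat{W}$ through both the Cartan and the finite Bruhat pieces is where all the bookkeeping lives. Everything else --- the identification of $\hat{T}$, the affine Weyl group structure, and axioms (T3) and (T4) --- is either formal or a rank-one computation once the decomposition is in place.
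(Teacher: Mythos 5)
The paper does not prove this statement: it is imported verbatim from Iwahori \cite{Iwa66}, and the only ingredient the paper itself verifies is the identification $\hat I\cap\hat N=T(\sO)$ and the resulting extension $1\to X_*(T)\to\hat W\to W\to 1$ in the paragraphs following the theorem. Your outline is, in substance, the standard Iwahori--Matsumoto argument from that reference: identify $\hat T=T(\sO)$, realize $\hat W\cong W\ltimes Q^\vee$ as the affine Weyl group acting on $X_*(T)\otimes\BR$ (this is where simple connectedness enters --- it forces $X_*(T)=Q^\vee$, so $\hat W$ is an honest Coxeter group; the paper's own Remark after the theorem notes that otherwise one only gets a generalized Tits system), prove the Iwahori--Bruhat decomposition, and reduce (T3), (T4) to rank-one computations. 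So the approach is the right one and consistent with the cited source.

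Two points deserve care if you were to write this out. First, the logical order: in Bourbaki's axiomatic development the decomposition $\hat G=\bigsqcup_{w\in\hat W}\hat I\dot w\hat I$ is a \emph{consequence} of (T1)--(T4), so using it as the input to verify the axioms requires an independent proof of disjointness; your proposed route via the valuation and the residue Bruhat decomposition works, but the cleanest independent argument is the simply transitive action of $\hat W$ on the alcoves of the apartment (equivalently, a length function argument on affine roots), and you should say which one you are committing to, since ``keeping track of the indexing'' is precisely where the content is. Second, (T4) for the affine reflection $s_0$ is not literally ``$U_\alpha\mapsto U_{-\alpha}$'': the root subgroup of $\hat I$ attached to the wall $\alpha_0=0$ is $U_{-\alpha_{\max}}(m)$, and conjugation by $\dot s_0$ carries it to $U_{\alpha_{\max}}(m^{-1})$, which fails to lie in $G(\sO)$ and hence in $\hat I$; the statement is correct once phrased in terms of affine root subgroups $U_{\alpha+n\delta}=U_\alpha(m^{n})$, but the purely finite-root phrasing you give would only cover the reflections $s\in S$.
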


The group $\hat{I}$ is called  a \emph{Iwahori subgroup} of $\hat G$. We put $\hat{W}:=\hat{N}/(\hat{I}\cap\hat{N})$ and  explain how to choose a system of generators $\hat{S}$ of $\hat{W}$.    
We first verify that $\hat{I}\cap\hat{N}=T(\sO)$. As $k$ is algebraically closed, $G$ splits over $k$ and then $\hat{N}/T(\sK)=W$. Then we can see $\hat{N}\cap B(\sK)=T(\sK)$ because we may choose representative of cosets $\hat{N}/T(\sK)$ from $G$. It follows that $\hat{I}\cap\hat{N}=T(\sO)$.

This implies  that we have the exact sequence:
\begin{equation*}
1\rightarrow T(\sK)/T(\sO)\rightarrow \hat{N}/(\hat{I}\cap\hat{N})\rightarrow N_{G}(T)/T\rightarrow 1.
\end{equation*}
There is a natural isomorphism $X_*(T)\to T(\sK)/T(\sO)$. 
%
So the above exact sequence boils down to
\begin{equation}\label{affine weyl}
1\rightarrow X_*(T)\rightarrow \hat{W} \rightarrow W\rightarrow 1.
\end{equation}
A splitting of this sequence is induced by the inclusions $B(k)\subset \hat{I}$ and $N_G(k)\subset N_G(\sK)$ as this  induces a group homomorphism $W=N_G(k)/(B(k)\cap N_G(k))\to  \hat{N}/(\hat{I}\cap\hat{N})=\hat W$. This makes $\hat{W}$ a semi-direct product 
$W\ltimes  X_*(T)$ which is faithfully represented on $X_*(T)\otimes\BR$  by affine-linear transformations. 
Then $X_{*}(T)\otimes\mathbb{R}$ endowed with the $\hat W$-action is a geometric realization of the apartment that the Tits theory associates to $\hat I\cap \hat N=T(\sO)$. 
\begin{remark}
	Iwahori \cite{Iwa66} showed that if $G$ is not simply connected, then $(\hat{G},\hat{N},\hat{I})$ forms what is called a \emph{generalized Tits system}. The exact sequence \ref{affine weyl} still holds, but $\hat W$ need no longer be a Coxeter group; rather it is an extension of an affine Coxeter group by the (finite) fundamental group of $G$, which can be easily deduced from the exact sequence \ref{affine weyl}.  We still have a Tits complex, where the role of a standard parabolic subgroups is taken by the neutral component of the groups $\hat G_X$.		
\end{remark}  

\begin{upshot}
	By Theorem \ref{para}, to determine the conjugacy classes of parabolic subgroups, it is enough to understand the subset of generators of $\hat{W}$.
\end{upshot}

Let us identify the basic chamber and the generating set $\hat S$ of $\hat W$ in these terms. Since $G$ is semisimple simply connected, so without loss of generality, we now assume that $G$ is almost simple, in other words, that $\sR(G,T)$ is an irreducible root system.
We regard each root as an affine-linear form on $X_{*}(T)\otimes\mathbb{R}$.  We denote by $\alpha_{\max}$ the highest root of $T$ in $B$
(so with respect to the system of simple roots $\Delta$) and put $\alpha_0:=1- \alpha_{\max}$. 
Then $\hat \Delta:=\{\alpha_0\}\cup \Delta=\{ \alpha_i\}_{i=0}^n$ consists of $n+1$ affine-linearly independent forms on $X_{*}(T)\otimes\mathbb{R}$, so that  the intersection of the half spaces defined by $\alpha\ge 0$, $\alpha\in\hat\Delta$, is a  geometric simplex  in $X_{*}(T)\otimes\mathbb{R}$ with supporting hyperplanes defined by the affine-linear forms $\alpha\in\hat\Delta$. We refer to this simplex as the \emph{fundamental alcove} and denote it by $\Al$. To be more precise:
\[
\Al:=\{\lambda\in X_{*}(T)\otimes\BR|\langle\lambda,\alpha_{i}\rangle, i=0,1,\ldots,n\}
\]
It corresponds to the basic simplex of the apartment.  If $s_{0}$ denotes the affine reflection with respect the affine hyperplane defined by $\alpha_0=0$: $x\mapsto x+\alpha_0(x)\alpha^\vee_{\max}$, then $\hat{S}:=\{s_0\}\cup S=\{s_i\}_{i=0}^n$ is the generating set of $\hat{W}$ produced
by the Tits theory. To conclude:

\begin{proposition}\cite{BT84}\label{prop:conj of para and alcove}
	The affine-linear independence of $\alpha_0, \dots,\alpha_n$ implies that by assigning to each facet $F$ of $\Al$ the subset $\emph{X(F)}=\{s_{i}\in S|\,\,s_{i}|_{F}=Id\}$ of $\hat{S}$, we obtain a  bijection between the facets of $\Al$ and the \emph{proper} subsets of $\hat{S}$. In particular, we have a bijection:
	\[
	\{\text{conjugacy classes of parahoric subgroups}.\}\leftrightarrow\{\text{facets of the fundamental alcove} \; \Al.\}
	\]
\end{proposition}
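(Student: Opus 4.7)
The plan is to factor the asserted bijection through two intermediate bijections: (i) between facets of $\Al$ and proper subsets of $\hat{S}$, and (ii) between proper subsets of $\hat{S}$ and conjugacy classes of parahoric subgroups of $\hat{G}$. The second of these is essentially just Theorem \ref{para} applied to the Tits system $(\hat{G},\hat{I},\hat{N})$ of Theorem \ref{Loop Tits system}: conjugacy classes of parabolic subgroups of that Tits system (i.e., parahoric subgroups of $\hat{G}$, in the paper's terminology) are in bijection with subsets of $\hat{S}$, and $\hat{S}$ itself corresponds to all of $\hat{G}$, leaving precisely the proper subsets on the parahoric side.

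For (i), I would argue that $\Al$ is a geometric $n$-simplex and then invoke standard simplex combinatorics. Concretely, the $n+1$ affine-linear forms $\alpha_0,\alpha_1,\ldots,\alpha_n$ are linearly independent in the $(n+1)$-dimensional space of affine-linear forms on $X_{*}(T)\otimes\BR$: the simple roots $\alpha_1,\ldots,\alpha_n$ form a basis of the linear dual of $X_{*}(T)\otimes\BR$, while $\alpha_0=1-\alpha_{\max}$ contributes a nonzero constant term. Consequently $\Al$ is a (closed) $n$-simplex whose $n+1$ bounding hyperplanes are $\{\alpha_i=0\}$, and its faces are in canonical bijection with subsets $I\subsetneq\{0,1,\ldots,n\}$ via $I\mapsto F_I:=\{\lambda\in\Al:\alpha_i(\lambda)=0\text{ for }i\in I\}$, with $I=\emptyset$ corresponding to $\Al$ itself. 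Since $s_i$ is the affine reflection in the hyperplane $\{\alpha_i=0\}$, it fixes $\lambda$ pointwise iff $\alpha_i(\lambda)=0$, so $X(F_I)=\{s_i:i\in I\}$; identifying $\hat{S}$ with $\{s_0,\ldots,s_n\}$, the map $F\mapsto X(F)$ is exactly the claimed bijection.

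Composing (i) with (ii) yields the desired bijection between facets of $\Al$ and conjugacy classes of parahoric subgroups of $\hat{G}$. The main substantive point in the argument is non-degeneracy: that each proper subset $I\subsetneq\{0,\ldots,n\}$ really produces a non-empty face $F_I$ of the expected dimension $n-|I|$, so that the map $I\mapsto F_I$ is surjective and not merely injective. This is precisely what the affine-linear independence of the $\alpha_i$ buys; beyond it the proof is a straightforward assembly of the Tits-system machinery already developed in the excerpt together with standard simplex combinatorics, and I do not anticipate further obstacles.
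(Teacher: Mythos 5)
Your proposal is correct and follows essentially the same route the paper intends: the paper cites \cite{BT84} and gives no written proof, but its surrounding discussion (the Upshot invoking Theorem \ref{para} for the Tits system $(\hat G,\hat I,\hat N)$, followed by the observation that the affine-linear independence of $\hat\Delta$ makes $\Al$ a geometric $n$-simplex) is exactly your two-step factorization through proper subsets of $\hat S$. Your added care about non-degeneracy of the faces $F_I$ and the exclusion of $I=\{0,\dots,n\}$ matching the exclusion of the improper parahoric $\hat G$ itself is the right way to make the paper's implicit argument explicit.
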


We noted that $X_*(T)\otimes \BR$ is the geometric realization of an apartment. Its decomposition into relatively open facets is defined by the affine hyperplane arrangement  whose hyperplanes are defined by $\langle \alpha, \lambda\rangle=\ell$, where $\alpha\in \sR(T, G)$ and $\ell\in\BZ$. So 
$\lambda, \lambda'\in X_*(T)\otimes \BR$ belong to the same relatively open facet $F$ if and only if for all $\alpha\in \sR(T, G)$, 
$\lfloor \langle \alpha, \lambda\rangle\rfloor=\lfloor \langle \alpha, \lambda'\rangle\rfloor$ (use that $-\alpha$ is also a root).

For a facet $F$ of $X_*(T)\otimes \BR$,  this common value of the $\lfloor \langle \alpha, \lambda\rangle\rfloor $ for all $\lambda$ in the relative interior of $F$ is of course  $\lfloor\min\{\alpha|F\} \rfloor$. The following Lemma describes the parahoric subgroup associated to $F$ in terms of these integers.
For a root $\alpha\in \sR(G, T)$, denote by $U_\alpha\subset G$ the associated root subgroup (a copy of $\mathbb{G}_a$).

\begin{lemma}\label{lemma:structure of parahoric} 
	For a facet $F$ of $X_*(T)\otimes \BR$, the associated parahoric subgroup $\hat{P}_F\subset G(\sK)$
	is  generated by $T(\sO)$ and the $U_{\alpha}(m^{-\lfloor \min\{ \alpha|F\}\rfloor})$ with $\alpha\in \sR(G,T)$.
\end{lemma}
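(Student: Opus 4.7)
The plan is to establish the asserted generation by proving two inclusions. Write $H_F$ for the subgroup of $G(\sK)$ generated by $T(\sO)$ together with the $U_\alpha(m^{-\lfloor\min\{\alpha|F\}\rfloor})$ for $\alpha\in\sR(G,T)$. For the inclusion $H_F\subset\hat{P}_F$, I would invoke the description of $\hat{P}_F$ as the pointwise stabilizer of $F$ in the apartment $X_*(T)\otimes\BR$: an affine root subgroup element $u_\alpha(ct^n)$ with $c\in k^\times$ and $n\in\BZ$ fixes a point $x$ precisely when $\alpha(x)+n\geq 0$, and the identity $-\lfloor r\rfloor=\lceil -r\rceil$ translates this into the condition $n\geq -\lfloor\min\{\alpha|F\}\rfloor$ that cuts out $U_\alpha(m^{-\lfloor\min\{\alpha|F\}\rfloor})$. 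Since $T(\sO)$ acts trivially on the apartment, every generator of $H_F$ lies in $\hat{P}_F$.

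For the reverse inclusion $\hat{P}_F\subset H_F$, I would first reduce to the case $F\subset\bar\Al$. Any facet $F$ is contained in the closure of some alcove $\Al'$, and picking $w\in\hat W$ with $w\Al'=\Al$, conjugation by a lift of $w$ in $\hat N$ carries $\hat P_F$ to $\hat P_{wF}$ and $H_F$ to $H_{wF}$ compatibly, since the rule $\alpha\mapsto w\alpha$ on affine roots matches the corresponding change in $\lfloor\min\{\alpha|F\}\rfloor$. Assuming henceforth $F\subset\bar\Al$, the inequalities $0\leq\alpha\leq\alpha_{\max}\leq 1$ on $\bar\Al$ for positive roots $\alpha$ (with the reverse for negative roots) force $\lfloor\min\{\alpha|F\}\rfloor\in\{0,1\}$ when $\alpha>0$ and $\lfloor\min\{\alpha|F\}\rfloor\in\{-1,0\}$ when $\alpha<0$; in either case the corresponding filtration in $H_F$ contains the standard Iwahori piece $U_\alpha(\sO)$ or $U_\alpha(m)$. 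Hence $\hat I\subset H_F$.

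By Proposition \ref{prop:conj of para and alcove} and Theorem \ref{bij in Tits sys} applied to the loop Tits system $(\hat G,\hat I,\hat N)$, the parahoric decomposes as $\hat{P}_F=\hat G_{X(F)}=\hat I\,\hat W_{X(F)}\,\hat I$, so it is generated by $\hat I$ together with representatives of the affine reflections $s_i$ for which $\alpha_i\equiv 0$ on $F$. It thus suffices to realize each such $s_i$ inside $H_F$. When $\alpha_i$ is a classical simple root ($1\leq i\leq n$) vanishing on $F$, a representative of $s_i$ in $N_G(T)(\sO)$ admits the standard expression as a product in $U_{\alpha_i}(\sO)$ and $U_{-\alpha_i}(\sO)$, and both factors lie in $H_F$ since $\min\{\pm\alpha_i|F\}=0$. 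When $\alpha_0=1-\alpha_{\max}$ vanishes on $F$ we have $\alpha_{\max}\equiv 1$ on $F$, so $U_{\alpha_{\max}}(m^{-1})$ and $U_{-\alpha_{\max}}(m)$ both sit in $H_F$ by the floor computation, and these are precisely the pieces that build a representative of the affine reflection $s_0$ in $\hat N$.

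The main obstacle is bookkeeping rather than conceptual: matching the floor-function description of root subgroup filtrations with the apartment combinatorics, handling the affine simple root $\alpha_0$ via its identification with $U_{-\alpha_{\max}}$ twisted by $t$, and checking the $\hat W$-equivariance used in the reduction to $F\subset\bar\Al$. The conceptual content — that parahorics are pointwise stabilizers of facets and are generated by root subgroup pieces together with $T(\sO)$ — is the standard Bruhat--Tits recipe, already visible in the Iwahori case.
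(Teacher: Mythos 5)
Your proposal is essentially correct, but be aware that the paper does not actually prove this lemma: it is stated as a structural fact and deferred wholesale to Tits \cite{Tits79}, so there is no internal argument to compare against. What you have written is a sound reconstruction of the standard Bruhat--Tits argument. The bookkeeping is right ($n\geq -\min\{\alpha|F\}$ for integral $n$ is equivalent to $n\geq -\lfloor\min\{\alpha|F\}\rfloor$), the reduction to $F\subset\Al$ by $\hat{N}$-conjugation is legitimate, the bounds $\lfloor\min\{\alpha|F\}\rfloor\in\{0,1\}$ (resp.\ $\{-1,0\}$) for positive (resp.\ negative) roots on $\Al$ do give $\hat{I}\subset H_F$, and generating $\hat{P}_F=\hat{I}\hat{W}_{X(F)}\hat{I}$ from $\hat{I}$ plus representatives of the reflections in $X(F)$ (the rank-one identity for finite simple roots, the $t$-twisted copies $U_{\alpha_{\max}}(m^{-1})$, $U_{-\alpha_{\max}}(m)$ for $s_0$) is exactly the right mechanism.

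The one step you should tighten is the inclusion $H_F\subset\hat{P}_F$. In this paper $\hat{P}_F$ is \emph{defined} through the Tits system as $\hat{G}_{X(F)}$ (Proposition \ref{prop:conj of para and alcove} together with Theorem \ref{bij in Tits sys}), not as a stabilizer; the identification of $\hat{G}_{X(F)}$ with the pointwise fixer of $F$ in the apartment is itself a nontrivial theorem, of roughly the same depth as the lemma, and it uses that $G$ is simply connected (for non-simply-connected $G$ the fixer of a facet can be strictly larger than the parahoric). If you want the argument to stay inside the framework the paper sets up, replace that appeal by a direct Bruhat-cell computation: for $F\subset\Al$ the only generators of $H_F$ not already contained in $\hat{I}$ are the $U_{-\beta}(\sO)$ for positive roots $\beta$ vanishing identically on $F$, and $U_{\alpha_{\max}}(m^{-1})$ when $\alpha_{\max}\equiv 1$ on $F$; the $\SL_2$-identity $u_{-\beta}(c)\in \hat{I}\,s_\beta\,\hat{I}$ for $c\in\sO^{\times}$ (and its affine analogue for $s_0$) places each of these in $\hat{I}\hat{W}_{X(F)}\hat{I}=\hat{P}_F$, since every reflection fixing $F$ pointwise lies in $\hat{W}_{X(F)}$. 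With that adjustment the proof is self-contained and, as you say, the rest is bookkeeping.
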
	 

For a proof,  see for example Tits  \cite{Tits79}. In the following example, we focus on standard parahoric subgroups, whose structures are more explicit.

\begin{example}\label{example:parabolic}
	Let $F$ be a face of the fundamental alcove $\Al$ which contains the origin. If  $X\subset \Delta$ denotes the set simple roots that are zero on $F$,
	then  $F$ is defined by the following (in)equalities: $\alpha =0$ for $\alpha\in X$, $\alpha \ge 0$ if $\alpha\in \Delta\ssm X$ and $\alpha_{\max}< 1$.
	Hence for every  root $\alpha\in \sR(G,T)$ we have $\min\{\alpha|F\}\in [-1,0]$. This minimum is $0$ precisely when $\alpha$ is positive or lies
	in the root subsystem $\sR_X:=W_X(X)$; for all other roots, this minimum lies in $[-1, 0)$. It follows that $\hat{P}_F$ is generated by 
	$T(\sO)$ and the $U_{\alpha}(\sO)$ with $\alpha\in \sR_+(G,T)\cup \sR_X=\sR(P_X,T)$ and the $U_{\alpha}(m)$ for all other roots. So this is just $ev^{-1} P_X$ as defined before.
\end{example}


\subsubsection{\textbf{Construction of Bruhat-Tits Group Schemes}}
A subgroup of $\hat G$ is parabolic with respect to this Tits system, if it contains a conjugate of $\hat{I}$. In this particular situation, it is however customary to call such a group \emph{parahoric} (a contamination of Iwahori and parabolic). A parahoric subgroup corresponds to a simplex of the Tits building. By Theorem \ref{para}, a parahoric subgroup is conjugate to some $\hat{G}_{X}=\cup_{w\in \hat{W}_{X}}\hat{I}w\hat{I}$ for a unique subset $X\subset \hat S$. If $s_{0}\notin X$ (so that $X\subset S$), then (as is described in \cite{BS14}),  $\hat{G}_{X}$ is the preimage of $G_{X}$ under the map $ev:G(\sO)\rightarrow G(k)$. From the preceding, we conclude that by assigning to a facet $F$ of $\Al$, the standard parahoric subgroup $\hat G_X$ associated with the corresponding subset $X\subset\hat S$, we obtain a bijection between the facets of $\Al$ and the conjugacy classes of the \emph{proper} parahoric subgroups of $\hat{G}$. We then write $\hat P_F$ for $\hat G_X$. The following theorem is crucial for our purpose:

%

\begin{theorem}\cite{BT84}\label{BT existence}
	For every facet $F$ of the Tits building $\sB(\hat G, \hat I, \hat N)$, there exists a unique smooth affine group scheme $\sG_{F}$ over $\sO$, such that $\sG_F(\sO)=\hat{P}_{F}$ as subgroups of $\hat{G}$ so that an inclusion $F\subset F'$ defines an $\sO$-morphism $\sG_{F'}\to \sG_{F}$
	which on the $\sO$-valued points induces the injection $\hat P_F'\subset \hat P_F$. If 
	$F$ is the origin of the fundamental alcove, then this group is obtained from $F$ by the base change defined by $k\subset \sO$.
\end{theorem}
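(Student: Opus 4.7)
The plan is to build $\sG_F$ explicitly by a dilatation (Néron blow-up) construction, starting from the hyperspecial case and then descending to smaller facets. For the origin $F=\{0\}$ of $\Al$, Lemma \ref{lemma:structure of parahoric} gives $\hat P_F = G(\sO)$, so we simply take $\sG_F := G\times_k\sO$; this is manifestly smooth and affine, and its $\sO$-points are $G(\sO)$. More generally, for any facet $F$ whose closure contains the origin, the associated set $X\subset \Delta$ from Example \ref{example:parabolic} identifies $\hat P_F$ with $ev^{-1}(P_X)$ inside $G(\sO)$. I would then define $\sG_F$ as the dilatation of $G\times_k\sO$ along the closed subgroup $P_X\subset G_k = (G\times_k\sO)\otimes_\sO k$; this is the standard affine blow-up construction that produces a smooth affine $\sO$-group scheme $\sG_F$ equipped with a canonical morphism $\sG_F\to G\times_k\sO$ whose effect on $\sO$-points is exactly the inclusion $ev^{-1}(P_X)\hookrightarrow G(\sO)$. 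Checking this last statement against Lemma \ref{lemma:structure of parahoric} amounts to verifying that each root subgroup $U_\alpha$ with $\min\{\alpha|F\}\in[-1,0)$ acquires an extra factor of the uniformizer, which is precisely the effect of the dilatation on the root subgroup not lying in $P_X$.

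For a facet $F$ contained in the closure of the fundamental alcove but \emph{not} containing the origin, I would first produce $\sG_F$ over $\sO$ by the same dilatation recipe applied to a root-theoretic presentation (using Lemma \ref{lemma:structure of parahoric}: dilate each $U_\alpha$ along $\sO \to \sO/m^{-\lfloor\min\{\alpha|F\}\rfloor}$), and then glue with $T(\sO)$ via the standard big-cell structure of $G$. For an \emph{arbitrary} facet $F'$ of the building, Proposition \ref{prop:conj of para and alcove} provides some $g\in \hat G$ conjugating $F'$ into the closure of $\Al$, and one defines $\sG_{F'}$ by $\hat G$-equivariant transport; independence of the choice of $g$ follows from Theorem \ref{para}(ii).

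Uniqueness I would prove via the Bruhat-Tits principle that a smooth affine $\sO$-group scheme with connected fibers is determined by its generic fiber together with the subgroup of $\sK$-points that its $\sO$-points cut out: any two such candidates $\sG_F$, $\sG_F'$ have the same generic fiber $G_\sK$ and the same $\sO$-points $\hat P_F$; the identity on the generic fiber then extends over $\sO$ by the Néron mapping property (smoothness plus agreement on $\sO$-points forces an $\sO$-isomorphism). The morphism $\sG_{F'}\to \sG_F$ attached to an inclusion $F\subset F'$ is obtained the same way: it exists and is unique on the generic fiber, and extends because $\hat P_{F'}\subset \hat P_F$.

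The main obstacle I expect is the uniqueness statement and the verification that the dilatation really produces the intended $\sO$-points (rather than something larger or smaller). Both points reduce to a careful bookkeeping of the filtration on each root subgroup $U_\alpha(\sK)$ induced by the valuation, matched against the integers $\lfloor\min\{\alpha|F\}\rfloor$ of Lemma \ref{lemma:structure of parahoric}; once those are aligned, smoothness of $\sG_F$ follows because dilatation preserves smoothness, and uniqueness follows from the Néron-type extension property. A secondary subtlety is the compatibility of the construction for inclusions $F\subset F'\subset F''$ under the conjugation used for facets outside the closure of $\Al$, but this is a formal consequence of the functoriality of dilatation together with Theorem \ref{para}(ii).
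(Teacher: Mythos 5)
This theorem is not proved in the paper at all: it is quoted from Bruhat--Tits \cite{BT84}, so there is no ``paper's proof'' to compare against line by line. Your proposal is a genuine attempt at a construction, and large parts of it are sound. For a facet $F$ whose closure contains the origin, the dilatation (N\'eron blow-up) of $G\times_k\sO$ along the parabolic $P_X$ in the special fiber is indeed a smooth affine $\sO$-group scheme whose $\sO$-points are exactly $ev^{-1}(P_X)=\hat P_F$; this matches Example \ref{example:parabolic} and is a standard alternative to the schematic-root-data construction. Your uniqueness argument is also essentially the right one, though calling it the ``N\'eron mapping property'' is a misnomer --- these group schemes are not N\'eron models; what you are really using is the \emph{\'etoff\'e} property of \cite[1.7]{BT84}: over a strictly Henselian DVR with algebraically closed residue field, the $\sO$-points of a smooth scheme are schematically dense, so a morphism of generic fibers carrying $\sG_{F'}(\sO)$ into $\sG_F(\sO)$ extends uniquely over $\sO$. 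That is exactly how the morphisms $\sG_{F'}\to\sG_F$ and the uniqueness are obtained in \cite{BT84}.

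The genuine gap is in the case of a facet of $\Al$ that does \emph{not} contain the origin (these exist and are not conjugate to faces through the origin when $G$ is simply connected --- already for $\SL_2$ the two vertices of $\Al$ are not $\SL_2(\sK)$-conjugate). There you propose to ``dilate each $U_\alpha$'' separately and ``glue with $T(\sO)$ via the standard big-cell structure''. Dilating the individual root subgroups and the torus only produces the open big cell $\prod_{\alpha<0}\sU_\alpha\times\sT\times\prod_{\alpha>0}\sU_\alpha$ as a \emph{scheme}; the entire content of the Bruhat--Tits existence theorem is that the birational group law on this big cell extends to a group scheme structure on a smooth affine model, which is where the schematic root data axioms (the commutator and Weyl-reflection compatibilities among the filtrations $U_\alpha(m^{-\lfloor\min\{\alpha|F\}\rfloor})$) must be verified. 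Asserting the gluing does not close this. A cleaner route, and one already implicit in this paper, is the Balaji--Seshadri/Edixhoven construction used in Proposition \ref{prop:bundleconversion}: choose a rational $\lambda$ in the relative interior of $F$, set $\eta=\lambda_o^a(z)$ over the ramified cover $\sO'$, and define $\sG_F:=\bigl(\Res_{\sO'/\sO}\,\eta\,G_{\sO'}\,\eta^{-1}\bigr)^{\mu_r}$; smoothness is \cite[Proposition 3.5]{Ed92} and the identity $\sG_F(\sO)=G(\sK)\cap\eta G(\sO')\eta^{-1}=\hat P_F$ is exactly the computation in Proposition \ref{prop:bundleconversion}. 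If you want a self-contained proof in the split, equal-characteristic setting of this paper, that is the construction to adopt for the facets your dilatation argument does not reach.
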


Since our focus of the thesis is on moduli of bundles over curves, we will use the following proposition (Lemma  3.18 and Lemma 3.19 in  Chernousov, Gille and Pianzola \cite{CGP12} which is  attributed by the authors to Raghunathan and Ramanathan):

\begin{proposition}\label{grp over curve}
	Let $C$ be a smooth projective curve and $x\in C$ a closed point. Put $U:=C\smallsetminus\{ x\}$, write $\sO_x$ for the formal completion of  $\sO_{C, x}$  and 
	$\sK_x$ for the field of fractions of $\sO_x$.
	\begin{itemize}
		\item[(i)](Glueing property) Assume we have a triple $(\sG_{U},\sG_{\sO_{x}},f)$, with $\sG_U$an affine group scheme over $k[U]$ of finite type, $\sG_{\sO_x}$  an affine finitely presented group 
		over $\sO_x$ and $f$ an isomorphism of  $\sK_x$-group schemes between the pull-backs of $G_U$ and $G_{\sO_x}$ over 
		$\Spec(\sK_x)$. 
		Then there exists an affine group scheme of finite type $\sG$ over $C$  and isomorphisms $\sG|_U\simeq \sG_U$ and $\sG|_{\sO_x}\simeq \sG_{\sO_x}$ that are compatible with $f$. If both $\sG_U$ and $\sG_{\sO_x}$ are smooth, then so is $\sG$. This group $\sG$ is unique up to isomorphism;
		\item [(ii)](Functorial property) Assume we have $(\sG_{U},\sG_{\sO_x},f)$ and $(\sG_{U}',\sG_{\sO_x}',f')$ similar with above, and we have group homomorphisms $\phi_{U}:\sG_{U}\rightarrow\sG'_{U}$, $\phi_{x}:\sG_{\sO_x}\rightarrow\sG_{\sO_x}'$ compatible with $f,f'$, then there is a unique homorphism $\phi:\sG\rightarrow\sG'$ which glues $\phi_{U}$ and $\phi_{x}$;
	\end{itemize}
\end{proposition}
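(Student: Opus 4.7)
The strategy is Beauville--Laszlo faithfully flat descent, applied at the level of quasi-coherent sheaves of commutative Hopf $\sO_C$-algebras rather than just of modules, exploiting that an affine $C$-group scheme of finite type is precisely the relative spectrum of such a sheaf.

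For part (i), I would first choose an affine open neighbourhood $V\subset C$ of $x$; since $C\ssm V\subset U$ the gluing away from $V$ is ordinary Zariski gluing, so the issue is genuinely local at $x$. Set $R=\sO_C(V)$ and let $t\in R$ be a uniformizer at $x$, so that $R[1/t]=\sO_C(V\cap U)$ and the $t$-adic completion $\hat R\cong\sO_x$. Beauville--Laszlo provides an equivalence between finitely presented $R$-modules and triples $(M_1,M_2,\phi)$ with $M_1$ finitely presented over $R[1/t]$, $M_2$ finitely presented over $\sO_x$, and $\phi\colon M_1\otimes\sK_x\iso M_2\otimes\sK_x$. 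This equivalence is symmetric monoidal, hence promotes to finitely presented commutative algebras and therefore to Hopf algebra objects; the coproduct, counit and antipode of $\sG_U|_{V\cap U}$ and of $\sG_{\sO_x}$ agree under $f$ over $\Spec\sK_x$, so they descend to a Hopf algebra structure on the resulting $R$-algebra $A$. I would then set $\sA|_V:=\widetilde{A}$, Zariski glue with the coordinate sheaf of $\sG_U$ over $V\cap U$ to obtain a quasi-coherent sheaf of commutative Hopf $\sO_C$-algebras $\sA$ on all of $C$, and finally put $\sG:=\underline{\Spec}_C\sA$.

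By construction $\sG$ is affine over $C$, of finite type (since Beauville--Laszlo preserves finite presentation), and its base changes to $U$ and to $\Spec\sO_x$ recover $\sG_U$ and $\sG_{\sO_x}$ compatibly with $f$. Smoothness descends because the map $U\sqcup\Spec\sO_x\to C$ is faithfully flat near $x$ and smoothness is fpqc-local on the target. Uniqueness in (i) and the existence and uniqueness of $\phi$ in (ii) are both the fully faithful half of the same equivalence: a compatible pair $(\phi_U,\phi_x)$ is by definition a morphism of descent data, and yields a unique $\phi\colon\sG\to\sG'$.

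The main technical obstacle is verifying that the Hopf algebra axioms descend, which amounts to the compatibility of Beauville--Laszlo with the tensor product $\otimes_{\sO_C}$, so that the fiber product $\sG\times_C\sG$ really is glued from $\sG_U\times_U\sG_U$ and $\sG_{\sO_x}\times_{\sO_x}\sG_{\sO_x}$. In the finitely presented case this monoidal compatibility is classical and constitutes the only real content beyond the module-level statement; the remaining assertions --- affineness of the output (automatic for $\underline{\Spec}_C$ of a quasi-coherent algebra), finite type, smoothness, uniqueness, and functoriality --- then follow formally.
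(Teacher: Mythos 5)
Your argument is correct, and it is worth noting that the paper does not actually prove this proposition: it is quoted from Chernousov--Gille--Pianzola \cite{CGP12} (Lemmas 3.18--3.19, attributed there to Raghunathan--Ramanathan), with the strengthened functoriality in (ii) deferred to \cite{BLR12}. What you have written is essentially the standard argument underlying those citations: reduce to an affine neighbourhood $V=\Spec R$ of $x$, invoke the Beauville--Laszlo/formal-glueing equivalence for the pair $(R[1/t],\sO_x)$ glued over $\sK_x$, observe that the equivalence is symmetric monoidal so that it carries Hopf-algebra objects to Hopf-algebra objects, take $\underline{\Spec}$, and Zariski-glue back over $C$; full faithfulness of the same equivalence gives both the uniqueness in (i) and all of (ii). One small point deserves care: the Beauville--Laszlo theorem in its general (non-Noetherian) form has caveats about $t$-torsion in the essential image, so you should either restrict to the Noetherian formal-glueing statement of Ferrand--Raynaud type (which applies here, since $R=\sO_C(V)$ is a Dedekind domain and all algebras in sight are of finite type, hence finitely presented) or note that $R\to R[1/t]\times\sO_x$ is faithfully flat so that ordinary fpqc descent applies once one knows the descent datum over $\sO_x\otimes_R\sO_x$ can be replaced by the glueing over $\sK_x$. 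Your smoothness argument (smoothness is fpqc-local on the base for the cover $U\sqcup\Spec\sO_x\to C$) is the same as the one implicit in the paper's citation, and the whole construction generalizes verbatim to a finite set $D$ of points, as the paper remarks.
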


The second part of the Proposition is slightly stronger than that in \cite{CGP12}, but it is also true due to Bosch, L\"{u}tkebohmert and Raynauld \cite[$\S$6.2 Proposition D.4(b)]{BLR12} since all our group schemes are affine. This proposition and its proof generalizes in a straightforward manner to the case where $x$ is replaced by finite set  $D$ of closed points of $C$.


Given an effective reduced divisor $D\subset C$ and a map $\sF$ which assigns $x\in D$ a facet $F(x)$ of $\Al$, which gives a triple $(G_{U}, \{\sG_{\sF(x)}\}_{x\in D}, Id)$. Here $\sG_{\sF (x)}$ is the group scheme over $\Spec\sO_{x}$ as in Theorem \ref{BT existence}. Combining Theorem \ref{BT existence} and Proposition \ref{grp over curve}, we can conclude this section with the following theorem:

\begin{theorem}\label{Bruhat-Tits Group Schemes over Curves}
	There exists a smooth affine group scheme $\sG_{\sF}$ over $C$, unique up to isomorphism, such that $\sG_{\sF }|_{U}$ is isomorphic to the constant group scheme $G$ over $U$, 
	and for every $x\in D$, the group  $\sG_{\sF}(\sO_x)$  is $\sO_x$-isomorphic to the parahoric subgroup $\hat{P}_{F(x)}$ compatible with the identity map between pull back of $G\times U$ and $\sG_{\sF(x)}$ over $\Spec\sK_x$ for $x\in D$.    		
\end{theorem}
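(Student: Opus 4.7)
The plan is to obtain $\sG_{\sF}$ as the outcome of a gluing argument: assemble the smooth affine group schemes provided by Theorem \ref{BT existence} at the punctured points with the constant group scheme $G\times U$ away from $D$, using Proposition \ref{grp over curve} to glue them along the punctured formal disks. Everything hard has already been done; what remains is to verify that the input hypotheses of the gluing proposition are satisfied and to upgrade the statement from one puncture to a finite set of punctures.

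More precisely, set $U:=C\ssm D$ and, for each $x\in D$, invoke Theorem \ref{BT existence} to obtain a unique smooth affine $\sO_x$-group scheme $\sG_{\sF(x)}$ with $\sG_{\sF(x)}(\sO_x)=\hat P_{\sF(x)}\subset \hat G=G(\sK_x)$. Over the fraction field $\sK_x$, the group scheme $\sG_{\sF(x)}\times_{\sO_x}\sK_x$ is canonically isomorphic to $G\times_k\sK_x$: indeed the parahoric subgroups $\hat P_{\sF(x)}$ sit inside $G(\sK_x)$ and on generic fibers the Bruhat--Tits construction recovers the split reductive group $G$ over $\sK_x$ (this is part of the content of Theorem \ref{BT existence} and is why the definition of $\hat P_{\sF(x)}$ in Lemma \ref{lemma:structure of parahoric} makes sense). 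This isomorphism gives the desired descent datum $f_x\colon G\times_k\sK_x\iso \sG_{\sF(x)}\times_{\sO_x}\sK_x$ at each $x\in D$, taken to be the identity.

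Now I would apply Proposition \ref{grp over curve} to the triple
\begin{equation*}
\bigl(G\times U,\; \{\sG_{\sF(x)}\}_{x\in D},\; \{\mathrm{id}\}_{x\in D}\bigr),
\end{equation*}
using the extension of the proposition to a finite set of closed points (which follows by iterating the one-point case, since gluing at different points involves disjoint formal neighbourhoods). This yields an affine group scheme $\sG_{\sF}$ of finite type over $C$, together with isomorphisms $\sG_{\sF}|_{U}\simeq G\times U$ and $\sG_{\sF}|_{\Spec\sO_x}\simeq \sG_{\sF(x)}$ compatible with the chosen identifications over $\Spec\sK_x$. Smoothness of $\sG_{\sF}$ follows from the gluing statement together with the smoothness of both $G\times U$ (as $G/k$ is smooth) and each $\sG_{\sF(x)}/\sO_x$ (Theorem \ref{BT existence}).

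Uniqueness up to isomorphism is the functorial half of Proposition \ref{grp over curve}: given another $\sG_{\sF}'$ with the same restrictions, the identity maps on $G\times U$ and on each $\sG_{\sF(x)}$ are compatible with the gluing data, hence lift to a unique homomorphism $\sG_{\sF}\to \sG_{\sF}'$, which is an isomorphism by the corresponding uniqueness for the inverse. The only mild subtlety I anticipate is the extension of Proposition \ref{grp over curve} from one point to the divisor $D$; but this is straightforward since the formal disks at distinct points of $D$ are disjoint, so the gluing can be performed independently at each puncture, and the remark already made in the excerpt about the many-point version justifies this step.
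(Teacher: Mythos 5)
Your proposal is correct and follows essentially the same route as the paper: the paper likewise forms the triple $(G\times U,\{\sG_{\sF(x)}\}_{x\in D},\mathrm{Id})$ from Theorem \ref{BT existence} and applies Proposition \ref{grp over curve} (in its many-point version) to glue, with uniqueness coming from the functorial part of that proposition. Your write-up simply makes explicit the verification of the gluing hypotheses and the iteration over the finitely many points of $D$, which the paper leaves implicit.
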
	

We call $\sG_{\sF}$ a \emph{Bruhat-Tits group scheme} attached to the pair $(D,\sF$). We might refer to a map $\sF$ as above as a set of \emph{parahoric data}. The group scheme $\sG_\sF$ will play an important role in our discussion of parahoric Hitchin systems.

\subsection{Parahoric Bundles and Higgs Bundles}\label{subsect:PB}

We keep our earlier notation:  $X^{*}(T)$ resp.\  $X_{*}(T)$ denotes the character group resp.\  cocharacter group of $T$,  $\sR(G,T)\subset X^{*}(T)$ the root system,  $\Delta\subset\sR$ the set of simple roots corresponding to our choice of $B$ and $\Al\subset X_*(T)\otimes\BR$ the fundamental alcove.

\begin{definition}
	Let $\sG_\sF$ be the Bruhat-Tits group scheme as in Theorem \ref{Bruhat-Tits Group Schemes over Curves}. Given a $k$-scheme $S$, a $\sG_\sF$-torsors over $S\times C$ is a scheme $\sE\rightarrow S\times C$ endowed with action of the group scheme $p^{*}\sG_{\sF}$ where $p:S\times C\rightarrow C$ such that there exists a $\fppf$ cover $\phi:Y\rightarrow S\times C$, along with a $\phi^{*}p^{*}\sG_{\sF}$ equivariant isomorphism between $\phi^{*}\sE$ and the trivial torsor $\phi^{*}p^{*}\sG_{\sF}$.
\end{definition}    
The folloing proposition follows directly from Heinloth's Proposition 1 in \cite{Hein10}, where the $\sG_{\sF}$ can be replaced by any smooth affine group scheme over $C$.	
\begin{proposition}
	The moduli functor $\mathbf{F}_{\sG_\sF}$:	
	\begin{equation*}
	\mathbf{F}_{\sG}: \mathsf{Sch}/k\rightarrow \mathsf{Groupoid},\;
	Y \mapsto \{\text{$\sG$-torsors over $C\times Y$}\}
	\end{equation*}
	 is represented by a smooth algebraic stack of locally finite type which is denoted by $\Bun_{\sG_\sF}$.
\end{proposition}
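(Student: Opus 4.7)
The plan is a direct reduction to Heinloth's Proposition 1 in \cite{Hein10}, which for any smooth affine group scheme $\sG$ on a smooth projective curve asserts that the moduli functor sending a test scheme $Y$ to the groupoid of $\sG$-torsors on $Y\times C$ is represented by a smooth algebraic stack locally of finite type. It therefore suffices to verify that $\sG_\sF$ meets these hypotheses and to note that the cited proposition applies without modification.

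First I would check smoothness and affineness of $\sG_\sF\rightarrow C$, both of which are local on $C$. Away from the parahoric locus, that is over $U=C\ssm D$, Theorem \ref{Bruhat-Tits Group Schemes over Curves} gives $\sG_\sF|_U\simeq G\times U$, which is smooth and affine since $G$ is. Over the formal disc at each $x\in D$, $\sG_\sF$ restricts to the Bruhat-Tits group scheme $\sG_{\sF(x)}$ of Theorem \ref{BT existence}, which is smooth affine of finite type by construction. Both properties descend through the gluing of Proposition \ref{grp over curve}, so $\sG_\sF$ is a smooth affine group scheme of finite type over $C$.

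With these hypotheses in hand, Heinloth's proof goes through verbatim. In outline, algebraicity and local finite-typeness follow from a uniformization statement combined with Beauville--Laszlo glueing, exhibiting $\Bun_{\sG_\sF}$ as an fppf quotient involving loop-group data at a sufficiently positive effective divisor on $C$; smoothness then follows because the obstruction to extending a $\sG_\sF$-torsor $\sE$ across a square-zero thickening of the base is controlled by the second cohomology on $C$ of the adjoint bundle, a coherent sheaf, which vanishes since $\dim C=1$.

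The only input that could conceivably fail for a general smooth affine group scheme is the uniformization step, namely that every $\sG_\sF$-torsor on $C$ becomes trivial after removing a suitable finite set of points. This is where simple connectedness of $G$ enters, through a theorem of Drinfeld--Simpson applied to $\sG_\sF|_U=G\times U$; it is precisely this case that Heinloth handles. Hence no new idea is required beyond citing \cite{Hein10}, and this uniformization input is the only place where genuinely nontrivial content sits.
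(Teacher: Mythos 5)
Your reduction to Heinloth's Proposition 1 in \cite{Hein10} is exactly the paper's argument: the paper simply observes that the proposition follows directly from that result, which holds for an arbitrary smooth affine group scheme over $C$, and the verification that $\sG_\sF$ is smooth and affine is built into Theorem \ref{Bruhat-Tits Group Schemes over Curves}. One caveat on your final paragraph: Heinloth's Proposition 1 does not rest on uniformization or on simple connectedness of the fibers. Algebraicity and local finite-typeness there are obtained by embedding the affine group scheme into $\GL(V)$ for a suitable vector bundle $V$ and exhibiting $\Bun_{\sG}\to\Bun_{\GL(V)}$ as representable (smoothness being the $H^2$-vanishing argument you describe), whereas uniformization via Drinfeld--Simpson is a separate and strictly harder theorem in that paper with additional hypotheses. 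So the place you identify as "the only genuinely nontrivial content" is not actually an input to this proposition, and presenting it as such would wrongly suggest the statement fails for non-simply-connected generic fiber; the paper explicitly notes the opposite.
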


Now we want to look at the cotangent space at a point of $\Bun_{\sG_{\sF}}$ defined by a $\sG_{\sF}$ torsor.

\begin{lemma}
	The sheaf $\sH om_{\sO_C}(\Lie(\sG_{\sF}),\omega_C)$ is a locally free $\sO_X$-module and for 
	every  $\sG_{\sF}$-torsor $E$ over $C$, we have a natural identification of  the cotangent space
	$T^{*}_{E}\Bun_{\sG_{\sF }}$ with $H^0(C, E(\sH om_{\sO_C}(\Lie(\sG_{\sF}),\omega_C)))$.
\end{lemma}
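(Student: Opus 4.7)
The plan is to reduce the statement to two well-known inputs: (a) deformation theory of torsors under a smooth affine group scheme over a curve, and (b) Serre duality on $C$.

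First I would establish the local freeness. Since $\sG_{\sF}$ is a smooth affine group scheme over $C$ by Theorem \ref{Bruhat-Tits Group Schemes over Curves}, its relative Lie algebra $\Lie(\sG_{\sF})$ is a vector bundle on $C$ of rank equal to the relative dimension of $\sG_{\sF}/C$. Then $\sH om_{\sO_C}(\Lie(\sG_{\sF}),\omega_C)\cong \Lie(\sG_\sF)^\vee\otimes_{\sO_C}\omega_C$ is locally free as well. Moreover, since the adjoint action of $\sG_{\sF}$ on $\Lie(\sG_{\sF})$ is $\sO_C$-linear, for any $\sG_{\sF}$-torsor $E$ the associated bundle $E(\Lie(\sG_{\sF}))$ is again a locally free $\sO_C$-module, and likewise $E(\sH om_{\sO_C}(\Lie(\sG_{\sF}),\omega_C))\cong E(\Lie(\sG_\sF))^\vee\otimes_{\sO_C}\omega_C$.

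Next I would compute the tangent space at $[E]\in\Bun_{\sG_{\sF}}$. Standard deformation theory for torsors under a smooth affine group scheme identifies first-order deformations of $E$ with $H^1(C,E(\Lie(\sG_{\sF})))$, and infinitesimal automorphisms with $H^0(C,E(\Lie(\sG_{\sF})))$. Concretely, one covers $C$ by an \'etale (or fppf) cover trivializing $E$, and deformations are classified by $\check{\mathrm H}^1$ with values in the adjoint sheaf, which on a curve agrees with the derived functor $H^1$. This gives a canonical isomorphism $T_{E}\Bun_{\sG_{\sF}}\cong H^1(C,E(\Lie(\sG_{\sF})))$.

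Finally I would dualize using Serre duality on the smooth projective curve $C$. Since $E(\Lie(\sG_{\sF}))$ is a locally free $\sO_C$-module of finite rank, Serre duality gives a canonical perfect pairing
\begin{equation*}
H^1(C,E(\Lie(\sG_{\sF})))\;\times\;H^0(C,E(\Lie(\sG_{\sF}))^{\vee}\otimes\omega_C)\;\longrightarrow\;k.
\end{equation*}
Combining this with the identification $E(\Lie(\sG_{\sF}))^{\vee}\otimes\omega_C\cong E(\sH om_{\sO_C}(\Lie(\sG_{\sF}),\omega_C))$ from the first step yields the desired natural isomorphism
\begin{equation*}
T^{*}_{E}\Bun_{\sG_{\sF}}\;\cong\;H^0\bigl(C,E(\sH om_{\sO_C}(\Lie(\sG_{\sF}),\omega_C))\bigr).
\end{equation*}

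The one point that needs a little care, and is the main obstacle, is checking that the deformation-theoretic identification of $T_E\Bun_{\sG_{\sF}}$ with $H^1$ of the adjoint sheaf really does go through for the Bruhat--Tits group scheme $\sG_{\sF}$, which is nontrivial at the points of $D$. Since $\sG_{\sF}$ is smooth and affine over $C$, however, the standard fppf-cocycle argument used for constant $G$ applies verbatim: one deforms over $\Spec k[\epsilon]/(\epsilon^2)$, reduces to trivializations on an fppf cover, and observes that the $\epsilon$-part of a cocycle in $\sG_{\sF}$ lands in $\Lie(\sG_{\sF})$. Naturality in $E$ is automatic from the construction.
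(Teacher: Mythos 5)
Your proposal is correct and follows essentially the same route as the paper: local freeness of $\Lie(\sG_{\sF})$ from smoothness of the Bruhat--Tits group scheme, the deformation-theoretic identification $T_E\Bun_{\sG_{\sF}}\cong H^1(C,E(\Lie(\sG_{\sF})))$, and Serre duality to obtain the cotangent space. Your extra remarks on the fppf-cocycle argument and naturality are sound elaborations of the same argument.
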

\begin{proof}
	Because $\sG_{\sF}$ is a smooth affine group scheme over $C$, $\Lie(\sG_{\sF})$ is a locally free $\sO_{C}$ sheaf. Then $\sH om_{\sO_C}(\Lie(\sG_{\sF}),\omega_C)$ is also locally free. Deformation theory tells us that the tangent space of $\Bun_{\sG_{\sF }}$ at $E$ may be identified with
	$H^{1}(X,E(\Lie(\sG_{\sF})))$. So by Serre duality, the cotangent space at $E$ is identified with
	$H^{0}(X, E(\sH om_{\sO_C}(\Lie(\sG_{\sF}),\omega_C)))$ as asserted.
\end{proof}

In the following, we will give a better description of $T^{*}\Bun_{\sG_{\sF}}$.
We look at sections of $\sH om_{\sO_C}(\Lie(\sG_{\sF}),\omega_C)$ over $\BD_{x}=\Spec \sO_{x}$, the formal disc around $x\in D$. Using Lemma \ref{lemma:structure of parahoric}, we know sections of $\Lie(\sG_{\sF})$ over $\BD_{x}$ can be written as \begin{equation}
\gt(\sO)\oplus(\oplus_{\alpha\in \sR(G,T)}\g_{\alpha}(m^{-\lfloor\min\{\alpha|\sF(x)\}\rfloor}))
\end{equation}
We know that $\gt$ and the $\{\g_\alpha\oplus\g_{-\alpha}\}_{\alpha\in \sR_+(G,T)}$ are mutually perpendicular for the Killing form, with each $\g_\alpha$ isotropic, but $\g_\alpha\times\g_{-\alpha}\to k$ perfect. Since $\sF(x)\subset \Al$, $\lfloor\min\{\alpha|\sF(x)\}\rfloor=0,1$ or $-1$ for $\alpha\in\sR(G,T)$, we can check that the Killing form induces a perfect pairing for all $\alpha\in\sR(G,T)$:
\begin{equation}\label{pairing}
\g_{-\alpha}(m^{-\lfloor\min\{-\alpha|\sF(x)\}\rfloor})\times \g_{\alpha}(m^{\lfloor1-\min\{\alpha|\sF(x)\}\rfloor})\rightarrow m
\end{equation}
To sum up:
\begin{proposition}\label{parahoric dual}
	We denote the sheaf $E(\sH om_{\sO_C}(\Lie(\sG_{\sF}),\omega_C))$ by $E(\sF)\otimes\omega_{C}(D)$. Then sections of $E(\sF)$ over $\BD_{x}$ for $x\in D$ is 
	\begin{equation}
	\gt(m)\oplus(\oplus_{\alpha\in \sR(G,T)}\g_{\alpha}(m^{\lfloor1-\min\{\alpha|\sF(x)\}\rfloor}))
	\end{equation}
	here we use $\omega_{C}(D)$ is due to that the pairing \eqref{pairing} takes value in $m$.
\end{proposition}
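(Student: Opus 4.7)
The plan is to reduce to a local computation at each $x\in D$ and dualize the description of $\Lie(\sG_\sF)$ given by Lemma~\ref{lemma:structure of parahoric} via the Killing form. Off $D$ there is nothing to check: $\sG_\sF|_{C\ssm D}$ is the constant group scheme $G$, so $\Lie(\sG_\sF)|_{C\ssm D}=\g\otimes\sO_{C\ssm D}$ is self-dual under the Killing form, and the local formula for $E(\sF)$ collapses to the ordinary root-space decomposition of $\g$.

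Fix now $x\in D$ and work on $\BD_{x}=\Spec\sO_{x}$; write $m=m_{x}$ and $\sK=\sK_{x}$. First I would combine Lemma~\ref{lemma:structure of parahoric} with the root decomposition $\g=\gt\oplus\bigoplus_{\alpha}\g_{\alpha}$ to present
\[
\Lie(\sG_\sF)|_{\BD_{x}}=\gt(\sO)\oplus\bigoplus_{\alpha\in\sR(G,T)}\g_{\alpha}\bigl(m^{-\lfloor\min\{\alpha|\sF(x)\}\rfloor}\bigr).
\]
Next I would compute $\sH om_{\sO}(\Lie(\sG_\sF)|_{\BD_{x}},\omega_{C}|_{\BD_{x}})$. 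Thanks to Assumption~\ref{blas} the Killing form $\kappa$ on $\g$ is non-degenerate, pairs $\gt$ with itself and each $\g_{\alpha}$ with $\g_{-\alpha}$ perfectly, and vanishes on all other mixed components. Extending $\kappa$ $\sK$-linearly and trivializing $\omega_{C}|_{\BD_{x}}\cong\sO\cdot dt$, this dual is identified with the $\sO$-sublattice of $\g\otimes\sK\cdot dt$ whose pairing with $\Lie(\sG_\sF)|_{\BD_{x}}$ lands in $\sO\cdot dt$. It splits under the $T$-action into a torus piece $\gt(\sO)\cdot dt$ and $\alpha$-pieces $\g_{\alpha}(m^{\lfloor\min\{-\alpha|\sF(x)\}\rfloor})\cdot dt$, the exponents being forced by pairing with the $(-\alpha)$-component of $\Lie(\sG_\sF)|_{\BD_{x}}$.

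Finally I would untwist by $\omega_{C}(D)^{-1}|_{\BD_{x}}\cong m\cdot dt^{-1}$, which shifts each exponent up by $1$: the torus piece becomes $\gt(m)$ and the $\alpha$-piece becomes $\g_{\alpha}(m^{1+\lfloor\min\{-\alpha|\sF(x)\}\rfloor})$. Matching this to the claimed $\g_{\alpha}(m^{\lfloor 1-\min\{\alpha|\sF(x)\}\rfloor})$ is the main technical step, and it is precisely what the perfectness of the pairing~\eqref{pairing} encodes. Its content is that for every root $\alpha$ the integer $\lfloor\alpha(\lambda)\rfloor$ is independent of $\lambda$ as $\lambda$ ranges over the relative interior of the facet $\sF(x)\subset\Al$, which follows from the fact that no affine root hyperplane passes through the relative interior of a facet of the fundamental alcove. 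So the formal duality is routine, and it is this discrete floor-function bookkeeping---together with the observation that, thanks to it, \eqref{pairing} takes values in $m$ rather than merely in $\sO$---that is the actual content of the proposition.
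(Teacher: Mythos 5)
Your proposal is correct and follows essentially the same route as the paper: the paper's own ``proof'' is precisely the paragraph preceding the proposition, which invokes Lemma~\ref{lemma:structure of parahoric} for the root-space presentation of $\Lie(\sG_{\sF})$ over $\BD_{x}$, uses the block structure of the Killing form ($\gt$ self-paired, $\g_{\alpha}$ paired with $\g_{-\alpha}$), and records the perfect pairing \eqref{pairing} into $m$ to account for the twist by $\omega_{C}(D)$. Your version merely makes the dualization and the floor-function bookkeeping (constancy of $\lfloor\langle\alpha,\lambda\rangle\rfloor$ on the relative interior of a facet) explicit, which is exactly the content the paper leaves implicit.
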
    

%

Now we can introduce the definition of parahoric Higgs bundles over $C$, and prove that the moduli functor of parahoric Higgs bundles is represented by an algebraic stack.
\begin{definition}
	A \emph{ parahoric} Higgs bundle over $C$ is a pair $(E,\theta)$, where $E$ is $\sG_{\sF}$ torsor, and $\theta\in H^{0}(X,E(\sF)\otimes\omega_{C}(D))$ here $E(\sF)$ is defined as in Proposition \ref{parahoric dual}. 
\end{definition}

The following theorem is well known, but we give a proof here for the completeness.
\begin{theorem}\label{rep of para hig}
	The moduli functor $\mathbf{H}_{\sG_\sF}: \mathsf{Sch}/k\to \mathsf{Groupoid}$ which assigns to a $k$-scheme $Y$  the groupoid of 
	pairs $(\sE,\Theta)$, where 
	$\sE$ is a 	$\sG_\sF$-torsor over $Y$ and $\Theta$ is a Higgs field for $\sE$ over $Y\times C$, is represented an algebraic stack (denoted by $\Hig_{\sG_{\sF}}$).
\end{theorem}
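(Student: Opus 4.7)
The plan is to realize $\Hig_{\sG_\sF}$ as a Hom-stack (in analogy with the non-parahoric case), and then invoke the algebraicity of $\Bun_{\sG_\sF}$ together with standard representability results for relative sections of coherent sheaves over the proper curve $C$.

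First I would set $\sV := \sH om_{\sO_C}(\Lie(\sG_\sF),\omega_C)$, which by Proposition~\ref{parahoric dual} is a locally free $\sO_C$-module carrying a natural action of $\sG_\sF$ through the adjoint action on $\Lie(\sG_\sF)$. Let $\BV(\sV)=\Spec_C\Sym_{\sO_C}(\sV^\vee)$ be the associated geometric vector bundle over $C$; it is smooth and affine over $C$ and inherits a fiberwise $\sG_\sF$-action. Form the quotient stack $\sX:=[\BV(\sV)/\sG_\sF]$ over $C$. Unwinding the definitions, a $Y$-family of parahoric Higgs bundles (i.e.\ a pair $(\sE,\Theta)$ with $\sE$ a $\sG_\sF$-torsor on $Y\times C$ and $\Theta\in H^0(Y\times C,\sE(\sV))$) is exactly a $C$-morphism $Y\times C\to\sX$: the underlying map to $B\sG_\sF$ encodes $\sE$, and the lift to $\sX$ is a $\sG_\sF$-equivariant section of $\sE\to \sE\times_C \BV(\sV)$, which is the same datum as a global section of $\sE(\sV)$. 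Hence $\Hig_{\sG_\sF}\simeq \Hom_C(C,\sX)$.

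Next, since $C$ is a smooth projective curve over $k$ and $\sX$ is an algebraic stack locally of finite presentation over $C$ with affine diagonal (because $\sG_\sF$ is smooth affine and $\BV(\sV)$ is affine over $C$), the general representability theorem for Hom-stacks over a proper flat base (e.g.\ Olsson's) yields that $\Hom_C(C,\sX)$ is an algebraic stack locally of finite presentation over $k$. Alternatively, one can argue directly: composing with $\sX\to B\sG_\sF$ produces the forgetful morphism $\Hig_{\sG_\sF}\to\Bun_{\sG_\sF}$, and over any $T$-point of $\Bun_{\sG_\sF}$ classifying a torsor $\sE_T$ on $T\times C$, the fiber is the functor $T'\mapsto H^0(T'\times C,\sE_{T'}(\sV))$. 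By cohomology and base change, this functor is representable by a relatively affine $T$-scheme, realized as the linear cone $\mathbb V(R^1\mathsf{pr}_{T*}\sE_T(\sV\otimes\omega_{C/T}^{-1}))^\vee$ or, equivalently, as the spec of an appropriate symmetric algebra on the two-term perfect complex $R\mathsf{pr}_{T*}\sE_T(\sV)$. Representability is then inherited from the algebraicity of $\Bun_{\sG_\sF}$ established in the previous section.

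The main obstacle is that $H^1(C,\sE(\sV))$ need not vanish or be of constant rank as $\sE$ varies, so the morphism $\Hig_{\sG_\sF}\to\Bun_{\sG_\sF}$ is not literally a vector bundle; one must therefore phrase the fiberwise statement via the perfect complex $R\mathsf{pr}_{T*}\sE_T(\sV)$ and the associated linear stack (or equivalently invoke a general Hom-stack representability theorem). Once this is done, the smoothness of $\sG_\sF$ and affineness of $\BV(\sV)$ make all hypotheses of the cited representability results transparent, and the theorem follows.
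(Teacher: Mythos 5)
Your proposal is correct and, in its ``direct'' form, is essentially the paper's own proof: the paper likewise reduces to showing that the forgetful morphism $\mathbf{H}_{\sG_\sF}\to\Bun_{\sG_\sF}$ is schematic and relatively affine, quoting \cite[Lemma 7.15]{CSW17} for the representability of the sections functor $T'\mapsto H^0(T'\times C,\sE_{T'}(\sV))$ by $\Spec(\Sym^\bullet_{\sO_T}W)$ for a suitable coherent sheaf $W$ --- exactly the symmetric-algebra-on-a-perfect-complex device you describe to handle the jumping of $H^1$. Your preliminary reformulation as the Hom-stack $\Hom_C(C,[\BV(\sV)/\sG_\sF])$ is a valid alternative packaging (and mirrors the paper's non-parahoric Lemma identifying $\Lhig$ with a Hom-stack), but it is not needed once the fiberwise affine representability over $\Bun_{\sG_\sF}$ is in hand.
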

\begin{proof}
	We only need to verify the forgetful map $\mathbf{H}_{\sG_{\sF }}\rightarrow \Bun_{\sG_\sF}$ 
	is represented by a scheme. This follows from the following:

	\textit{
		Let $Y\rightarrow Z$ be a flat morphism of finite presentation and $\mathscr{F}$ a quasi-coherent $\sO_{Y}$-module  of finite presentation
		which is flat over $Z$.  Then there exists a quasi-coherent sheaf $W$ over $Z$ of finite presentation such that the $Z$-scheme $\Spec (Sym^{\bullet}_{\sO_Z}W)$ represents the functor $\mathsf{Sch}/Z\to \mathsf{Ab}$ which assigns  to every $Z$-scheme $Z'$, the abelian group $\Gamma(C\times_{Z}Z',f^*\mathscr{F})$.
}	
	
	For a proof, see \cite[Lemma 7.15]{CSW17}. We apply this to the following situation: let be given a morphism $Z\rightarrow \Bun_{\sG_{\sF}}$ and denote the corresponding principal bundle over $Y:=Z\times C$ by $\sE$ and  take $\mathscr{F}:=\sE(\sF)\otimes pr_C^{*}(\omega_{C}(D))$. Then we know that 
	\begin{equation*}
	\mathbf{H}_{\sG_{\sF}}\rightarrow \Bun_{\sG_{\sF}}
	\end{equation*}
	is represented by scheme, and since $\Bun_{\sG_{\sF}}$ is a smooth algberaic stack of finite type, it follows that $\mathbf{H}_{\sG_{\sF}}$ is represented by an algebraic stack $\Hig_{\sG_{\sF}}$.
\end{proof}
As $E(\sF)(\sO_{x})$ is a subgroup of $E(\g)(\sO_{x})$ for $x\in D$, then the Chevalley map $\chi$ induces a map $E(\sF)(\sO_{x})\rightarrow\gc(\sO_{x})$ for $x\in D$.
\begin{definition}[Hitchin map, Hitchin base]
	The \emph{Hitchin map} is the natural map
	\begin{equation*}
	h_{\sF}: \Hig_{\sG_{\sF}}\to H^{0}(C,\gc\times_{\mathbb{G}_m}\omega_{C}(D)),\;\;
	(E,\theta)\mapsto \chi(\theta)
	\end{equation*}
	where $\chi$ is the Chevelley map and and the \emph{Hitchin base} is the closure of its image (denoted by $\sA_{\sF}$).
\end{definition}   	
As we can see in \cite{BK18} and \cite{SWW19}, the map $h_{\sF}$ is far from being dominant, so that $\sA_{\sF}$ will be a proper subscheme of $H^{0}(C,\gc\otimes\omega_{C}(D))$. 

\subsection{Parabolic Case}
In this subsection, to get a better intuition of the Proposition \ref{parahoric dual}, we will analyze parabolic case,  where we can tranlate $\sG_{\sF}$ Higgs bundles into parabolic Higgs bundles. We make the following assumption: 
\begin{assumption}\label{para assm}
	The map $\sF: x\in D\mapsto F(x)$ takes values in the faces of $\Al$ that contain the origin.
\end{assumption}

Recall the Example \ref{example:parabolic}, these are the  faces of $\Al$ that do not lie in the hyperplane $\alpha_{max}=1$, or equivalently, for which the corresponding subset of $\hat{S}$ is contained in $\hat{S}\ssm\{s_0\}$.  Every such face corresponds to a standard parabolic of $G$ and so we may now regard $\sF$ as a map $\sP$  which assigns to every $x\in D$ a standard parabolic $\sP(x)\subset G$  defined by $\sF(x)$. For this reason, we sometimes write $\sG_\sP$ for $\sG_\sF$.

We define the moduli functor 
\begin{equation}
\mathbf{F}_{G,\sP}: \mathsf{Sch}/k\to \mathsf{Groupoid}
\end{equation}
which assigns to a $k$-scheme $Y$ the groupoid of principal $G$-bundles $\sE$ over $Y\times C$ with for every $x\in D$ a section of $\sE(G/\sP(x))$ given over $Y\times\{x\}$.

We denote the moduli stack of $G$-bundles with parabolic structure $P$ at $x$ by $\Bun_{G,\lambda}$. We choose a root space decomposition with respect to the maximal torus $T$. $\g=\gt\oplus_{\alpha\in R(G,T)}\g_{\alpha}$
here $R(G,T)$ is the set of roots. We denote the complete local ring of $X$ at $x$ by $\sO$, we may choose a uniformizer $z$ at $x$, and fix an isomorphism $\sO\simeq k[[t]]$. We denote its fraction field by $\sK$ isomorphic to $k((t))$.
    
In the remaining part of this subsection, we make the following assumption: 
    \begin{assumption}\label{para assm}
    	$\langle\alpha_{max},\lambda\rangle<1$ and $\lambda\in X_{*}(T)\otimes\mathbb{Q}$.
    \end{assumption}
Under this assumption, $\sG_{\lambda}(\sO_{x})$ is the standard parahoric subgroup corresponds to $P_{\lambda}$. Then it is known that:
\begin{proposition}\cite{Hein10}\label{identification}
	The moduli functor $\mathbf{F}_{G,\sP}$ is represented by a smooth algebraic stack, which we denote by $\Bun_{G, \sP}$  
	(called the \emph{moduli stack of quasi-parabolic $G$-bundles}) and we have
	$\Bun_{G,\sP}\simeq\Bun_{\sG_\sP}$.
\end{proposition}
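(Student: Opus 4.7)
The plan is to establish the two assertions of the proposition in turn: first, the representability of $\mathbf{F}_{G,\sP}$ by a smooth algebraic stack, and second, an explicit pair of mutually quasi-inverse $1$-morphisms realising the equivalence $\Bun_{G,\sP}\simeq \Bun_{\sG_{\sP}}$.

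For representability, I would analyse the forgetful $1$-morphism $\pi\colon \mathbf{F}_{G,\sP}\to \Bun_G$ that discards the parabolic data. Given a test scheme $Y$ together with a $G$-torsor $\sE$ on $Y\times C$, the fibre of $\pi$ over the corresponding point of $\Bun_G(Y)$ parametrises tuples $(\sigma_x)_{x\in D}$ of sections of the associated bundles $\sE_{Y\times\{x\}}(G/\sP(x))\to Y$. Because each $G/\sP(x)$ is a smooth projective homogeneous variety, the finite product $\prod_{x\in D}\sE_{Y\times\{x\}}(G/\sP(x))$ is a smooth projective $Y$-scheme representing this fibre, so $\pi$ is schematic, smooth and proper of relative dimension $\sum_{x\in D}\dim G/\sP(x)$. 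Combined with Heinloth's theorem that $\Bun_G$ is a smooth algebraic stack locally of finite type, this yields that $\mathbf{F}_{G,\sP}$ is represented by a smooth algebraic stack $\Bun_{G,\sP}$.

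For the equivalence, I would construct $F\colon \Bun_{\sG_\sP}\to \Bun_{G,\sP}$ as follows. Given a $\sG_\sP$-torsor $\sE$ on $Y\times C$, the identification $\sG_\sP|_U\cong G\times U$ makes $\sE|_{Y\times U}$ a $G$-torsor. At each $x\in D$, Assumption \ref{para assm} places $\sF(x)$ in a face of $\Al$ containing the origin, so by Example \ref{example:parabolic} the inclusion $\hat{P}_{\sF(x)}\subset G(\sO_x)$ is induced by a morphism of $\sO_x$-group schemes $\sG_\sP|_{\Spec\sO_x}\to G\times\Spec\sO_x$. The contracted product $\sE|_{Y\times\Spec\sO_x}\times^{\sG_\sP}G$ is then a $G$-torsor on $Y\times\Spec\sO_x$ agreeing with $\sE|_{Y\times U}$ over $Y\times\Spec\sK_x$; Beauville--Laszlo glueing (equivalently, Proposition \ref{grp over curve} applied to frame bundles) produces a global $G$-torsor $E$ on $Y\times C$. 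The evaluation surjection $ev\colon \hat{P}_{\sF(x)}\twoheadrightarrow\sP(x)$ equips $E|_{Y\times\{x\}}$ with a canonical $\sP(x)$-reduction $\sigma_x$, completing the construction. Conversely, the inverse $F'\colon \Bun_{G,\sP}\to \Bun_{\sG_\sP}$ takes $(E,(\sigma_x))$ to the torsor obtained by glueing $E|_{Y\times U}$ with a $\hat{P}_{\sF(x)}$-reduction of $E|_{Y\times\Spec\sO_x}$ lifting $\sigma_x$; such a lift exists and is unique because $\hat{P}_{\sF(x)}=ev^{-1}\sP(x)$ fits in a short exact sequence with smooth pro-unipotent kernel, so the natural map $G/\hat{P}_{\sF(x)}\to G/\sP(x)$ over $\Spec\sO_x$ is an isomorphism of smooth $\sO_x$-schemes. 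Unwinding definitions shows that $F\circ F'$ and $F'\circ F$ are canonically $2$-isomorphic to the identity.

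The hard part will be making the lifting and glueing rigorous in families: one must check that the $\hat{P}_{\sF(x)}$-reduction assembled from a parabolic reduction $\sigma_x$ varies algebraically with $Y$ and that the glueing along $Y\times\Spec\sK_x$ descends faithfully to a $\sG_\sP$-torsor. This reduces to fpqc descent for torsors under the affine smooth group scheme $\sG_\sP$, together with the smoothness of $ev\colon \hat{P}_{\sF(x)}\to \sP(x)$ with pro-unipotent kernel, which guarantees unique lifting of reductions from the closed fibre to the formal disc. Once these ingredients are in place, the equivalence of stacks is formal, and in particular transports the smooth algebraic structure between the two descriptions.
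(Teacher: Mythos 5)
Your proposal is correct and follows essentially the same route as the paper, which cites Heinloth for the statement and then explains the identification exactly via the morphism $\sG_{\sP}\to G\times C$ (an isomorphism away from $D$, with image $\hat{P}_{F(x)}=ev^{-1}\sP(x)$ over $\Spec\sO_x$) so that a $\sG_\sP$-torsor becomes a $G$-torsor together with a $\sP(x)$-reduction at each $x\in D$. Your write-up just supplies more of the details (contracted product, Beauville--Laszlo glueing, unique lifting of reductions along $ev$) that the paper leaves to \cite{Hein10}; the only caution is that the phrase ``$G/\hat{P}_{\sF(x)}\to G/\sP(x)$ is an isomorphism of $\sO_x$-schemes'' should be understood as the statement that $\sG_{\sF(x)}$-reductions of a $G$-torsor over $\Spec\sO_x$ correspond bijectively to $\sP(x)$-reductions of its special fibre, since $\sG_{\sF(x)}$ is not a subgroup scheme of $G\times\Spec\sO_x$ but a dilatation of it.
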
 
Let us now explain this identification. As we assume for each $x\in D$ the assigned facet $F(x)$ is not contained in the hyperplane $\alpha_{\max}=1$, we have a morphism  $\sG_{\sP}\rightarrow G\times C$ with image $\hat{P}_{F(x)}$ in $G(\sO_{x})$ which is the preimage of $P(x)$ under the evaluation map $ev:\hat{G}\rightarrow G\subset P$. Then a $\sG_{\sP}$ torsor over $X$ amounts to say a $G$ torsor along with inductions $s_{x}\in G/\sP(x)$ at each $x\in D$.

\begin{remark}
	$\Bun_{G,\sP}$ is called  \emph{moduli stack of flagged $G$-bundles} in \cite{HS10}. In what follows, we will not distinguish between $\Bun_{G,\sP}$ and $\Bun_{\sG_\sP}$.  
\end{remark}

In parabolic cases, the Higgs field has a more explicit description. Since $\sF(x)$ contains the origin, and we denote $P(x)$ the corresponding parabolic subgroup of $G$. Let $\hat \gp$ resp.\ $\hat{\gn}_{\gp}$ be the preimage of $\gp(x)$ resp.\ $\gn_{\gp(x)}$ (the nilpotent radical) under the reduction map $\g\otimes_k\sO\to \g$. Then we can interpretation the pairing \eqref{pairing} as follows:
\begin{corollary}\label{dual}    	   	
	The $\sO$-linear extension of the Killing form restricts to a pairing $\hat{\gp}\times \hat{\gn}_{\gp}\rightarrow m$
	that is perfect over $\sO$ (recall that $m$ is the maximal ideal of $\sO$). 
\end{corollary}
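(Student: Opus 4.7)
The plan is to reduce the claim to the root-space decomposition of $\g$ and verify perfectness block by block.

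First, I would make the $\sO$-module structure of $\hat{\gp}$ and $\hat{\gn}_{\gp}$ explicit. Let $X\subset\Delta$ be such that $\gp=\gp_{X}$. From Example \ref{example:parabolic} (or directly from the definition of $ev$),
\[
\hat{\gp} \;=\; \gt\otimes_{k}\sO \;\oplus\; \bigoplus_{\alpha\in \sR_{+}(G,T)\cup \sR_{X}}\g_{\alpha}\otimes_{k}\sO \;\oplus\; \bigoplus_{\alpha\in \sR_{-}(G,T)\setminus \sR_{X}^{-}}\g_{\alpha}\otimes_{k}m.
\]
A parallel calculation, using $\gn_{\gp}=\bigoplus_{\alpha\in \sR_{+}(G,T)\setminus \sR_{X}^{+}}\g_{\alpha}$ and $\gt\cap\gn_{\gp}=0$, gives
\[
\hat{\gn}_{\gp} \;=\; \gt\otimes_{k}m \;\oplus\; \bigoplus_{\alpha\in \sR_{+}(G,T)\setminus \sR_{X}^{+}}\g_{\alpha}\otimes_{k}\sO \;\oplus\; \bigoplus_{\alpha\notin \sR_{+}(G,T)\setminus \sR_{X}^{+}}\g_{\alpha}\otimes_{k}m.
\]
For brevity, write $\g_{\alpha}\cap\hat{\gp}=\g_{\alpha}\otimes m^{a(\alpha)}$ and $\g_{\alpha}\cap\hat{\gn}_{\gp}=\g_{\alpha}\otimes m^{b(\alpha)}$ with $a(\alpha),b(\alpha)\in\{0,1\}$.

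Next, because the Killing form $K$ on $\g$ vanishes on $\gt\times\g_{\alpha}$ and on $\g_{\alpha}\times\g_{\beta}$ whenever $\alpha+\beta\neq 0$, its $\sO$-linear extension is block-diagonal with respect to $\g\otimes_{k}\sO=\gt\otimes_{k}\sO\oplus\bigoplus_{\alpha}\g_{\alpha}\otimes_{k}\sO$. The restricted pairing on $\hat{\gp}\times\hat{\gn}_{\gp}$ thus splits into a torus block $\gt\otimes\sO\times\gt\otimes m$ and one block per unordered pair $\{\alpha,-\alpha\}$. The torus block lands in $m$ (one factor is in $m$) and is perfect because $K|_{\gt}$ is non-degenerate, a standard consequence of Assumption \ref{blas}.

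For each root block, a quick case distinction over the four classes $\alpha\in \sR_{+}\setminus\sR_{X}^{+},\;\sR_{X}^{+},\;\sR_{X}^{-},\;\sR_{-}\setminus\sR_{X}^{-}$ shows that $a(\alpha)+b(-\alpha)=1$ in every case. Hence the block pairing $\g_{\alpha}\otimes m^{a(\alpha)}\times\g_{-\alpha}\otimes m^{b(-\alpha)}\to\sO$ actually takes values in $m$, and since $K|_{\g_{\alpha}\times\g_{-\alpha}}$ is a non-degenerate pairing of one-dimensional $k$-vector spaces, its $\sO$-linear extension (after the twist on one side by $m$) is a perfect $\sO$-pairing into $m$. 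Assembling all blocks yields the desired perfect pairing $\hat{\gp}\times\hat{\gn}_{\gp}\to m$. The only real work is the bookkeeping of the case distinction; the identity $a(\alpha)+b(-\alpha)=1$ is the concrete algebraic shadow of the duality already encoded in Proposition \ref{parahoric dual}.
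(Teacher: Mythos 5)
Your proof is correct and takes essentially the same route as the paper: the paper obtains this corollary by specializing the blockwise root-space pairing \eqref{pairing} of Proposition \ref{parahoric dual} to facets containing the origin, and your explicit case check that $a(\alpha)+b(-\alpha)=1$ for every root is exactly that computation carried out directly for $\hat{\gp}$ and $\hat{\gn}_{\gp}$.
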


Sections of the sheaf $E(\sH om_{\sO_C}(\Lie(\sG_{\sF}),\omega_C)$ over the formal disc $\BD_{x}=\Spec\sO_{x}$ can be described as $\hat{\gn}_{\gp}dt/t$ where $t\in\sO$ is a uniformizer (so that $dt/t$ generator of $\omega_{C,x}$) and $\gp$ is the Lie algebra of the parabolic subgroup $P(x)$. The residue map defines a surjection  $E(\g)\otimes\omega_{C}(D)\rightarrow \bigoplus_{x\in D} i_{x,*}\g/\gn_{\gp(x)}$ of $\sO_C$-modules. Let us denote its kernel by 
$E(\sF)\otimes\omega_{C}(D)$ so that we have an exact sequence
\begin{equation}\label{para higgs field}
0\rightarrow E(\sF)\otimes\omega_{C}(D)\rightarrow E(\g)\otimes\omega_{C}(D)\rightarrow \bigoplus_{x\in D} i_{x,*}\g/\gn_{\gp(x)}\rightarrow 0.
\end{equation}   
By Proposition \ref{parahoric dual} we may identify  $E(\sF)$ with $E(\sH om_{\sO_C}(\Lie(\sG_{\sF}),\omega_C))$.  Roughly speaking, the value of the Higgs field $\theta$ at $x\in D$ lies in $\gn_{\gp(x)}$, in particular it is nilpotent at $x\in D$. The order one pole at $x$ is explained by the duality in Corollary \ref{dual}.

\section{Geometry of Generic Fibers}\label{Section: para gen fibers}

In this section, we will describe generic fibers of parahoric Hitchin maps. We should mention that Baraglia, Kamgarpour, and Varma showed that over $\mathbb{C}$, the parahoric Hitchin map gives rise to an algebraically completely integrable system which generalizes  the nonparabolic case introduced in Hitchin's seminal paper \cite{Hit87S}. Their proof is complex-analytic in the  sense that they show that functions induced by Hitchin maps are Poisson commuting, and that in order to prove the properness of the Hitchin map, they make use of the Simpson correspondence over punctured Riemann surfaces. These methods cannot be applied in our setting. However their use of the aforementioned Simpson correspondence, and the work of Balaji and Seshadri in \cite{BS14}, motivates us to study the moduli stack of Higgs bundles over a smooth root stack $\mathscr{C}$ with coarse moduli space $C$. This will lead  us to set up an equivalence between the category of Higgs bundles over a root stack and the category of parahoric Higgs bundles over the underlying curve.

\subsection{Conjugacy classes of cyclic subgroups of $G$}\label{subsect:cyclic}
Let $r$ be a positive integer relative prime to the characteristic $p$ of $k$. Then the subgroup $\mu_r\subset k^\times=\mathbb{G}_m(k)$ of $r$th roots of unity consists of $r$ elements. Our first goal is to describe the $G$-conjugacy classes in $\Hom(\mu_r,G)$. 

A first observation is that any $\rho\in \Hom(\mu_r,G)$ lands in a maximal torus of $G$. Since this torus is conjugate to $T$, a $G$-conjugacy class of $\Hom(\mu_r, G)$ meets $\Hom(\mu_r, T)$ in a $W$-conjugacy class.  We next set up  $W$-equivariant group isomorphism between $\Hom(\mu_r, T)$ and $X_*(T)\otimes r^{-1}\BZ/\BZ$.  Since $\mu_r$ is a cyclic group, any $\rho\in \Hom(\mu_r, T)$ takes its values in a one-parameter subgroup of $T$.
So there is a primitive $\lambda_o\in X_*(T)$ (primitive means: $\lambda_o$ is injective) such that $\mu$ factors through
$\lambda_o$. In that case there exists an integer $a$ such that $\rho(\zeta)=\lambda_o^a(\zeta)$. It is now straightforward to check
that  

\begin{lemma}\label{lemma:}
	The element $\lambda_o\otimes \frac{a}{r}\in X_*(T)\otimes r^{-1}\BZ/\BZ$  only depends on $\rho$ and the resulting map
	$\Hom(\mu_r, T) \to  X_*(T)\otimes r^{-1}\BZ/\BZ$
	is a $W$-equivariant isomorphism of abelian groups. 
\end{lemma}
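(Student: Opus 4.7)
The plan is to build the $W$-equivariant isomorphism $\phi: X_*(T) \otimes_\BZ r^{-1}\BZ/\BZ \to \Hom(\mu_r, T)$ directly, from left to right, via the formula
\[
\phi(\lambda \otimes \tfrac{a}{r})(\zeta) := \lambda(\zeta^a)
\]
for $\lambda \in X_*(T)$, $a \in \BZ$, and $\zeta \in \mu_r$. Setting this up in this direction is cleaner than the recipe in the paragraph preceding the lemma because it sidesteps the non-uniqueness of the primitive $\lambda_o$ in higher rank, which would otherwise need to be addressed.

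I would first check that the formula descends to the tensor product. The two nontrivial relations to verify are that shifting $a$ by a multiple of $r$ leaves $\zeta^a$ unchanged (matching $\tfrac{a+r}{r} = \tfrac{a}{r}$ in $r^{-1}\BZ/\BZ$), and that $\phi(r\lambda \otimes \tfrac{a}{r})(\zeta) = \lambda(\zeta^{ra}) = e$ (matching $r\lambda \otimes \tfrac{a}{r} = \lambda \otimes a = 0$ on the left). Bilinearity in each factor is immediate, and $\phi$ is visibly a group homomorphism.

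For surjectivity I would use precisely the argument sketched above the lemma: for $\rho \in \Hom(\mu_r, T)$, the image $\rho(\mu_r)$ is a finite cyclic subgroup of $T$, hence contained in the image of a primitive cocharacter $\lambda_o: \mathbb{G}_m \to T$. Primitivity forces $\lambda_o$ to be injective on $\mu_r$, so there is a unique $a \in \BZ/r\BZ$ with $\rho(\zeta) = \lambda_o(\zeta^a)$ for all $\zeta \in \mu_r$, giving $\rho = \phi(\lambda_o \otimes \tfrac{a}{r})$. For injectivity, since $k$ is algebraically closed, $T$ splits, so fixing $T \cong \mathbb{G}_m^n$ reduces the question to the case $T = \mathbb{G}_m$, where both sides have order $r$ and the map is clearly nontrivial; in general both $X_*(T) \otimes r^{-1}\BZ/\BZ$ and $\Hom(\mu_r, T) \cong \Hom(\mu_r, \mathbb{G}_m)^n$ have order $r^n$, so surjectivity gives bijectivity.

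Finally, $W$-equivariance is automatic: conjugation by a lift $n \in N_G(T)$ of $w \in W$ acts on $X_*(T)$ by $\lambda \mapsto n\lambda n^{-1}$ and on $\Hom(\mu_r, T)$ by $\rho \mapsto n\rho n^{-1}$, and the formula defining $\phi$ is manifestly natural in $T$. The only place requiring care is the well-definedness of $\phi$ at the level of tensors, but once the map is organised left-to-right this reduces to the two easy relations above, so I do not anticipate any real obstacle.
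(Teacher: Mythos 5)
Your proof is correct, and it takes a slightly but genuinely different route from the paper's. The paper defines the map in the direction $\Hom(\mu_r,T)\to X_*(T)\otimes r^{-1}\BZ/\BZ$ by factoring $\rho$ through a primitive cocharacter $\lambda_o$ and sending $\rho\mapsto\lambda_o\otimes\frac{a}{r}$; the content of the lemma's first clause (``only depends on $\rho$'') is then precisely the independence of this element from the choice of $\lambda_o$ and $a$, which is not entirely obvious in higher rank since $\lambda_o$ is far from unique (e.g.\ the trivial $\rho$ factors through every primitive cocharacter). You instead build the inverse map $\phi(\lambda\otimes\frac{a}{r})(\zeta)=\lambda(\zeta^a)$, where well-definedness reduces to routine bilinearity checks, and you recover the paper's construction as the surjectivity step (here you still need, as the paper does, the observation that a finite cyclic subgroup of $T$ of order prime to $p$ lies in the image of a primitive cocharacter, and that such a cocharacter restricts to an isomorphism $\mu_r\simeq\lambda_o(\mu_r)$, so that $a$ is determined mod $r$). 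What your direction buys is that the ``only depends on $\rho$'' clause comes for free once $\phi$ is shown bijective, since $\lambda_o\otimes\frac{a}{r}=\phi^{-1}(\rho)$; what it costs is that you must separately argue injectivity, which you do correctly by comparing cardinalities ($r^n$ on both sides, using $p\nmid r$ so that $\Hom(\mu_r,\mathbb{G}_m)$ has order $r$) --- note that the intermediate remark that the rank-one map is ``clearly nontrivial'' does not by itself give injectivity for composite $r$ and can be dropped, as your counting argument already suffices. The $W$-equivariance is, as you say, immediate from naturality in $T$.
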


Thus the $G$-conjugacy class of $\rho$ determines a $W$-orbit in $X_*(T)\otimes r^{-1}\BZ/\BZ$ and vice versa.
Since $\hat W$ is a semi-direct product of $W$ and 
$X_*(T)$, this corresponds to the $\hat W$-orbit of $\lambda_o\otimes \frac{a}{r}$. 
As  the fundamental alcove $\Al$ is a strict fundamental domain for the $\hat W$-action on $X_*(T)\otimes \BR$, we  therefore represent this  
$\hat W$-orbit by a unique element $\lambda$ of $\Al\cap X_*(T)\otimes r^{-1}\BZ$. We sum up:

\begin{corollary}\label{cor:alcoverep}
	Every $G$-conjugacy class in $\Hom(\mu_r, G)$ is uniquely represented by a rational  cocharacter $\lambda=\lambda_o\otimes \frac{a}{r}\in \Al\cap (X_*(T)\otimes r^{-1}\BZ)$ (the representative being $\lambda_o^{a}|\mu_r$) and this defines a bijection between the $G$-conjugacy classes in $\Hom(\mu_r, T)$ and 
	$\Al\cap (X_*(T)\otimes r^{-1}\BZ)$.
\end{corollary}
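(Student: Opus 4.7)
The plan is to assemble the corollary out of three bijections whose ingredients have already been prepared in the subsection. First I would verify that the natural map
\[
\Hom(\mu_r,T)/W \;\longrightarrow\; \Hom(\mu_r,G)/G
\]
is a bijection. Surjectivity is exactly the observation that opens the subsection: any $\rho\in\Hom(\mu_r,G)$ factors through some maximal torus, and maximal tori of $G$ are $G$-conjugate to $T$. For injectivity, if $\rho,\rho'\in\Hom(\mu_r,T)$ satisfy $\rho'=g\rho g^{-1}$ for some $g\in G$, then $gTg^{-1}$ and $T$ are both maximal tori containing the image of $\rho'$, so they lie in the centralizer $Z_G(\mathrm{Im}\,\rho')$, which is connected reductive (since $G$ is simply connected and $\mathrm{Im}\,\rho'$ is diagonalizable); any two maximal tori of this centralizer are conjugate by an element of $Z_G(\mathrm{Im}\,\rho')$, so we may adjust $g$ to lie in $N_G(T)$, giving a $W$-conjugacy.

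Next I would feed this through Lemma \ref{lemma:}, which supplies a $W$-equivariant isomorphism $\Hom(\mu_r,T)\simeq X_*(T)\otimes r^{-1}\BZ/\BZ$. Taking $W$-orbits on both sides yields
\[
\Hom(\mu_r,G)/G \;\simeq\; \bigl(X_*(T)\otimes r^{-1}\BZ/\BZ\bigr)/W.
\]
Since $\hat W=W\ltimes X_*(T)$, quotienting first by the $X_*(T)$-translations and then by $W$ is the same as quotienting directly by $\hat W$; hence the right-hand side is canonically identified with $\bigl(X_*(T)\otimes r^{-1}\BZ\bigr)/\hat W$.

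Finally I would invoke the standard fact (already used implicitly in Section 3 and recorded in Bourbaki \cite{Bour02}) that the fundamental alcove $\Al$ is a strict fundamental domain for the action of $\hat W$ on $X_*(T)\otimes\BR$. Intersecting with the $\hat W$-stable lattice $X_*(T)\otimes r^{-1}\BZ$ then gives a strict fundamental domain $\Al\cap(X_*(T)\otimes r^{-1}\BZ)$ for the $\hat W$-action on this lattice, so every $\hat W$-orbit is represented by a unique such $\lambda$. Composing the three bijections produces the asserted bijection, and tracking the representatives backwards through Lemma \ref{lemma:} shows that $\lambda=\lambda_o\otimes\tfrac{a}{r}$ corresponds to the conjugacy class of $\lambda_o^a\big|_{\mu_r}$.

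The only genuinely delicate point is the injectivity step above: one must rule out that two $W$-inequivalent homomorphisms into $T$ become $G$-conjugate. The reduction I sketched through $Z_G(\mathrm{Im}\,\rho')$ is clean when that centralizer is connected, which is guaranteed here because $G$ is simply connected (so Steinberg's theorem applies to the diagonalizable subgroup $\mathrm{Im}\,\rho'$). Everything else is bookkeeping combining Lemma \ref{lemma:} with the semidirect-product structure of $\hat W$ and the fundamental-domain property of $\Al$.
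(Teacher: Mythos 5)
Your proof is correct and follows essentially the same route as the paper: reduce to $W$-orbits in $\Hom(\mu_r,T)$, transport them through the $W$-equivariant isomorphism of Lemma \ref{lemma:}, reinterpret $W$-orbits in $X_*(T)\otimes r^{-1}\BZ/\BZ$ as $\hat{W}$-orbits in $X_*(T)\otimes r^{-1}\BZ$ via the semidirect-product structure, and conclude with the fact that $\Al$ is a strict fundamental domain. The only difference is that you spell out the injectivity of $\Hom(\mu_r,T)/W\to\Hom(\mu_r,G)/G$ by passing through the (connected) centralizer of the image, a step the paper asserts without proof; this is a welcome addition, though connectedness of the full centralizer is not strictly needed, since the two maximal tori automatically lie in its identity component and are therefore conjugate there.
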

%

\subsection{Equivariant Higgs Bundles over a Formal Disc}\label{subsection:quotient disc}
In this subsection, we will describe Higgs bundles over a formal disc endowed with a finite cyclic group action.  Let $\sO, \sK$ be as before. Let $\sK'/\sK$ be a field extension of degree $r$, with $r$ be as above, so 
relatively prime to $p$.  This  is then a separable field extension. The integral closure of $\sO$ in $\sK'$, denoted 
$\sO'$,  is also a  DVR and if $t$ the uniformizer of $\sO$ and 
$z\in \sK'$ satisfies $z^r=t$, then $\sO'=k[[z]]$. For given $t$, such a $z$ is unique up to multiplication by an element of  $\mu_r\subset k^\times=\mathbb{G}_m(k)$ and indeed, 
the Galois group $\Gal(\sK'/\sK)$ may be identified with $\mu_r$. We extend the valuation on $\sK$ to a $r^{-1}\BZ$-valued one on $\sK'$ so that $z$ has valuation $r^{-1}$. 

Let us abbreviate  $\BD_{x}:=\Spec\sO$, $\BD^{\times}_{x}:=\Spec\sK$, $\BD_{x'}:=\Spec\sO'$ and $\BD_{x'}^\times:=\Spec\sK'$ and denote by $\pi: \BD_{x'}\to \BD_x$ the projection. Then $\pi$ is a ramified cover with ramification index $(r-1)$.
The natural $\mu_r$-action on tangent space of $\BD_{x'}$ at $x$ is given by the inclusion $\mu_r\subset\mathbb{G}_m(k)$. 

Since $G$ is semisimple and simply connected, a $G$-bundle over a smooth affine curve is trivial. In characteristic 0, Kumar, Narasimhan and Ramananthan \cite[Proposition 1.3]{KNR97} attribute this to Harder \cite[Satz 3.3]{Har67}. In the general case, this is due to Drinfeld and Simpson \cite[Theorem 3]{DS95}. For what follows, it is worth noting that a $G$-bundle over $\BD_{x'}$ is necessarily trivial.

\begin{definition}\label{def:orbifoldbundle}\label{def:orbifold higgs}
	A \emph{$G$-bundle over $\BD_{x'}/\mu_r$} is a $G$-bundle $E$ over $\BD_{x'}$
	endowed with an $\mu_r$-action , such that this action for some trivialization over $\BD_{x'}$ takes the form:
	\begin{equation}
	(z,g)\in \BD_{x'}\times G\mapsto (\zeta z,\rho(\zeta)g)\in \BD_{x'}\times G,
	\end{equation}
	for some $\rho\in \Hom(\mu_r,G)$. 
	
	A \emph{$G$-Higgs bundle} is pair $(E,\Theta)$, where $E$ is a $G$-bundle over $\BD_{x'}/\mu_r$ and $\Theta$ is  a $\mu_r$-equivariant section of $E(\g)\otimes\omega_{\BD_{x'}}$.	
\end{definition} 

In fact, for a $\rho\in \Hom(\mu_r,G)$, there is underlying group scheme $\sG^\rho$ over $\sO$, which is a `twisted form' of $G\times \BD_{x}$ in the sense that  $\sG^\rho$ after the base change 
$\sO\subset \sO'$ is isomorphic to $G\times\BD_{x'}$, but with the Galois action given as follows: $\zeta\in \mu_r$ takes $g(z)\in G(\sO')$ to $\rho(\zeta)g(\zeta^{-1}z)\rho(\zeta)^{-1}$, that is to say $\sG^{\rho}$ is an inner form of the costant group scheme $G$ over $\sO$. Then a $G$-bundle over $\BD_{x'}/\mu_r$ is the same thing as a $\sG^{\rho}$-torsor over $\BD_{x}$. In any case, it is clear that
the group $\sG^{\rho}(\sO)$ of its $\sO$-points,  which is the group of Galois-invariant elements of $\sG^{\rho}(\sO')=G(\sO')$, is precisely the group $\Aut^{\mathsf{C}}(E)$ defined below, here $\mathsf{C}$ is the conjugacy class of $\rho$. 

\begin{remark}\label{rem:}
	Note that for the trivialized  $E$ as above, any other trivialization  differs from the given one by left multiplication with some  $u\in G(\sO')$.
	This then changes the action of $\mu_r$ into the one which assigns to $\zeta\in\mu_r$ by left multiplication with $u\rho (\zeta) u^{-1}$.  If we want the latter
	to be an element of $G(k)$, then this means that we have  merely changed $\rho$ inside its $G$-conjugacy class. So the conjugacy class $\mathsf{C}$ of
	$\rho$ is a complete invariant of the isomorphism type of the $G$-bundle $E$; we then say that \emph{$E$ is of type $\mathsf{C}$}. 
	Corollary \ref{cor:alcoverep} provides us then with a natural  choice: we can and will assume that $\rho$ is given by a rational  cocharacter $\lambda=\lambda_o\otimes \frac{a}{r}\in \Al\cap (X_*(T)\otimes r^{-1}\BZ)$. 
\end{remark}

Observe that the restriction of $E$ over $\BD_{x'}/\mu_r$  to $\BD^\times_y$ is isomorphic to the pull-back of a 
$G$-bundle over $\BD_x^\times$ \emph{with the ensueing $\mu_r$-action}. If $\rho$ is trivial, then this is of course true for all of $E$.
\\

We denote the automorphism group of an $G$-bundle $E$ over $\BD_{x'}/\mu_r$ of type $\mathsf{C}$ by $\Aut^{\mathsf{C}}(E)$. Any $\Phi\in \Aut^{\mathsf{C}}(E)$ is in terms of the above trivialization given by an element $\phi\in G(\sO')=\Mor_k(\BD_{x'},G)$  which sends $(z,g)$ to $(z, \phi(z)g)$ and  commutes with the above action. This means that 
\begin{equation}
\phi (\zeta z)=\rho (\zeta)\phi(z)\rho(\zeta)^{-1}.
\end{equation}  
Our goal is to identify $\Aut^{\mathsf{C}}(E)$ with a parahoric subgroup of $G(\sO)$. 

The map $z\mapsto \lambda_o^a(z)$ defines an element $\eta\in G(\sK')$ with the property that $\eta(\zeta z)=\rho(\zeta)\eta(z)=\eta(z)\rho(\zeta)$.
So if we put $\phi^\eta:=\eta^{-1}\phi\eta$, then $\phi$ defines an automorphism of type $\rho$ if and only if $\phi^\eta(\zeta z)=\phi^\eta(z)$, in other words, 
if and only if $\phi^\eta\in G(\sK)$. 

\begin{proposition}\label{prop:bundleconversion}
	Assume that $E$ is a $G$-bundle over $\BD_{x'}/\mu_r$ and let  $F$ the associated facet of the fundamental alcove. Then the map $\Phi\mapsto \phi^\eta$ identifies $\Aut^{\mathsf{C}}(E)$ with the parahoric $\hat P_F$.
\end{proposition}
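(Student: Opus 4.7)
The plan is to show that $\Phi\mapsto\phi^{\eta}:=\eta^{-1}\phi\eta$ is a group isomorphism from $\Aut^{\mathsf{C}}(E)$ onto the parahoric $\hat P_{F}$ inside $G(\sK)$, where $F$ is the open facet of $\Al$ containing $\lambda=\lambda_{o}\otimes a/r$. The key numerical input is that $\lfloor\langle\alpha,\lambda\rangle\rfloor=\lfloor\min\{\alpha|F\}\rfloor$ for every root $\alpha\in\sR(G,T)$, as noted just before Lemma \ref{lemma:structure of parahoric}; this is what will match the output of the $\eta$-conjugation computation to the defining data of $\hat P_{F}$.

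First I would verify that $\phi^{\eta}$ actually lies in $G(\sK)$. Because $\rho(\zeta)=\lambda_{o}^{a}(\zeta)$ and $\eta(z)=\lambda_{o}^{a}(z)$ both factor through the one-parameter subgroup $\lambda_{o}\colon\mathbb{G}_{m}\to T$, they commute, so $\eta(\zeta z)=\rho(\zeta)\eta(z)=\eta(z)\rho(\zeta)$. Substituting this together with $\phi(\zeta z)=\rho(\zeta)\phi(z)\rho(\zeta)^{-1}$ into $\phi^{\eta}(\zeta z)=\eta(\zeta z)^{-1}\phi(\zeta z)\eta(\zeta z)$ yields $\phi^{\eta}(\zeta z)=\phi^{\eta}(z)$, so $\phi^{\eta}\in G(\sK')^{\mu_{r}}=G(\sK)$. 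Conversely any $g\in G(\sK)$ with $\eta g\eta^{-1}\in G(\sO')$ yields a $\mu_{r}$-equivariant automorphism via $\phi:=\eta g\eta^{-1}$, so the map $\Phi\mapsto\phi^{\eta}$ gives a bijection
\[
\Aut^{\mathsf{C}}(E)\;\longleftrightarrow\;\{\,g\in G(\sK)\,:\,\eta g\eta^{-1}\in G(\sO')\,\}.
\]

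Next I would identify this intersection with $\hat P_{F}$. Conjugation by the torus-valued $\eta$ is trivial on $T$, so the torus contribution reduces to $T(\sK)\cap G(\sO')=T(\sO)$. On each root subgroup one computes directly
\[
\eta\,u_{\alpha}(f)\,\eta^{-1}=u_{\alpha}\!\bigl(z^{a\langle\alpha,\lambda_{o}\rangle}f\bigr),\qquad f\in\sK,
\]
and this lies in $U_{\alpha}(\sO')$ if and only if $v_{z}(f)+a\langle\alpha,\lambda_{o}\rangle\ge 0$. Using $v_{z}(f)=r\,v_{t}(f)$ for $f\in\sK$, the condition becomes $v_{t}(f)\ge -\langle\alpha,\lambda\rangle$, which by integrality of $v_{t}(f)$ is the same as $v_{t}(f)\ge -\lfloor\min\{\alpha|F\}\rfloor$. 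By Lemma \ref{lemma:structure of parahoric} these are exactly the generators of $\hat P_{F}$, giving the inclusion $\hat P_{F}\subseteq\eta^{-1}G(\sO')\eta\cap G(\sK)$.

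The main obstacle is the reverse inclusion: proving that \emph{every} $g\in G(\sK)$ with $\eta g\eta^{-1}\in G(\sO')$ is already generated by the torus and root factors above. For this I would invoke the affine Bruhat (equivalently Iwahori) decomposition of $G(\sK)$ with respect to $T$ together with an ordering of the roots, apply $\eta$-conjugation factor by factor, and exploit the uniqueness of such decompositions inside $G(\sO')$ modulo the appropriate congruence subgroups to force each factor separately into the valuation bound derived above. This forces $g$ into the subgroup generated by $T(\sO)$ and the $U_{\alpha}(m^{-\lfloor\min\{\alpha|F\}\rfloor})$, completing the identification with $\hat P_{F}$.
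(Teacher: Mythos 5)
Your setup and the forward inclusion are correct and essentially identical to the paper's: the verification that $\phi^\eta\in G(\sK)$ via $\eta(\zeta z)=\rho(\zeta)\eta(z)=\eta(z)\rho(\zeta)$, the reduction to computing $G(\sK)\cap\eta G(\sO')\eta^{-1}$, and the root-group calculation $\eta\,U_\alpha(f)\,\eta^{-1}=U_\alpha(z^{a\langle\alpha,\lambda_o\rangle}f)$ with the resulting bound $v(f)\ge-\langle\alpha,\lambda\rangle$, equivalently $v(f)\ge-\lfloor\min\{\alpha|F\}\rfloor$ by integrality, all match. The problem is the reverse inclusion, which you rightly flag as the main obstacle but then only gesture at. The plan of taking an affine Bruhat/Iwahori decomposition of an arbitrary $g\in G(\sK)\cap\eta G(\sO')\eta^{-1}$, conjugating ``factor by factor,'' and invoking ``uniqueness of such decompositions inside $G(\sO')$ modulo the appropriate congruence subgroups'' is not a proof: the product $\eta g\eta^{-1}$ lying in $G(\sO')$ does not force the individual conjugated factors into $G(\sO')$, a general $g$ involves affine Weyl group representatives whose $\eta$-conjugates must also be controlled, and the ``uniqueness'' statement you would need is essentially the Iwahori factorization of the very group you are trying to identify --- i.e., it presupposes the conclusion. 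As written, this step is a genuine gap.

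The paper closes the reverse inclusion with a much softer argument that you should compare with: once $\hat P_F\subset G(\sK)\cap\eta G(\sO')\eta^{-1}$ is known, the intersection is a subgroup of $G(\sK)$ containing the Iwahori $\hat I$, hence by the Tits-system bijection (Theorem \ref{bij in Tits sys}) it equals $\hat G_{X'}$ for a unique $X'\supseteq X(F)$, i.e.\ it is $\hat P_{F'}$ for some face $F'$ of $F$ (or all of $G(\sK)$, which the valuation bounds exclude). But your root-group computation already determines \emph{exactly} which elements $U_\alpha(f)$ lie in the intersection, and for any proper face $F'$ of $F$ the larger parahoric $\hat P_{F'}$ contains some $U_\alpha(f)$ violating the bound $v(f)\ge-\min\{\alpha|F\}$. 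Hence no strictly larger standard parahoric is contained in the intersection, forcing equality with $\hat P_F$. This replaces your element-by-element factorization with the classification of subgroups containing $\hat I$, and is the step you are missing.
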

\begin{proof}
	In view of what we already established, it remains to show that $\hat P_F=G(\sK)\cap \eta G(\sO')\eta^{-1}$. We first prove
	that the right hand side contains $\hat P_F$. 
	
	It is clear that $T(\sO)\subset G(\sK)\cap \eta G(\sO')\eta^{-1}$. Let $\alpha\in \sR(G, T)$ and denote by $U_\alpha :\mathbb{G}_a\to G$ the
	associated root group. Then for any nonzero $f\in \sK$, 
	\begin{equation}
	\eta U_\alpha (f)\eta^{-1}=U_\alpha(z^{a\langle\alpha,\lambda_o \rangle}f)
	\end{equation}
	The right hand side lies in $G(\sO')$ if and only if the valuation of $f$, $v(f)$, is $\ge -\langle\alpha,\lambda \rangle$.
	This is equivalent to: $v(f)\ge -\min\{\alpha|F\}$. This proves that 
	$\hat P_F\subset G(\sK)\cap \eta G(\sO')\eta^{-1}$ and that this inclusion is no longer true if we replace $F$ by proper facet $F'$ of $F$. Since $G(\sK)\cap \eta G(\sO')\eta^{-1}$ contains $\hat{P}_{F}$, it is a parahoric subgroup. The theory of Tits systems then implies that we must have $G(\sK)\cap \eta G(\sO')\eta^{-1}=\hat P_F$.
\end{proof}

We also have an interpretation for the equivariant Higgs bundles:

\begin{proposition}\label{prop:higgsconversion}
	The Lie algebra version of Proposition \ref{prop:bundleconversion} identifies a Higgs field supported by $E$ with 
	a Higgs field on the root stack associated to the parahoric group $\sG_\sF$. To be precise, conjugation with $\eta^{-1}$ defines an isomorphism  
	\begin{equation}
	H^{0}(\BD_{x'},E(\g)\otimes\omega_{\BD_{x'}})^{\mu_r}\cong H^{0}(\BD_{x},E(\sF)\otimes\omega_{\BD_{x}}(x)),
	\end{equation}
	where $E(\sF)$ is defined by the exact sequence \ref{para higgs field}.
\end{proposition}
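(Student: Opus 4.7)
The plan is to mimic the strategy of Proposition \ref{prop:bundleconversion}, but at the Lie-algebra level: we pass from $G(\sO')$-valued automorphisms to $\g(\sO')$-valued one-forms, with conjugation by $\eta^{-1}$ again playing the central role. Since $G$ is semisimple and simply connected, the bundle $E$ trivializes over $\BD_{x'}$, and in such a trivialization a section of $E(\g)\otimes\omega_{\BD_{x'}}$ takes the form $\Theta=\xi(z)\,dz$ with $\xi\in\g\otimes\sO'$. Combining the adjoint action of $\mu_r$ on $E(\g)$ (via $\Ad\rho(\zeta)$) with the natural action of $\mu_r$ on $\omega_{\BD_{x'}}$ (under which $dz$ transforms by $\zeta$), I translate the $\mu_r$-equivariance of $\Theta$ into the identity
\[
\xi(\zeta z)\;=\;\zeta^{-1}\,\Ad(\rho(\zeta))\,\xi(z),
\]
and, decomposing $\xi=\xi_\gt+\sum_\alpha\xi_\alpha$ along $\g=\gt\oplus\bigoplus_\alpha\g_\alpha$ and using that $\Ad(\rho(\zeta))$ acts on $\g_\alpha$ by $\zeta^{r\langle\alpha,\lambda\rangle}$, read this off as an explicit constraint on the Laurent modes of each component.

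The second step is to apply $\Theta\mapsto\Ad(\eta^{-1})\Theta$, with $\eta=\lambda_o^{a}\in G(\sK')$ as in Proposition \ref{prop:bundleconversion}. Since $\Ad(\eta^{-1})$ acts on $\g_\alpha$ by multiplication by $z^{-r\langle\alpha,\lambda\rangle}$, the mode restriction above makes $\eta^{-1}\xi_\alpha\eta\cdot dz$ invariant under the \emph{untwisted} $\mu_r$-action; the whole one-form therefore descends to a $\g$-valued meromorphic one-form on $\BD_x$. Using $t=z^r$ and hence $dz/z=r^{-1}\,dt/t$, this descended form is naturally a section of $(\g\otimes_k\sK)\otimes\omega_{\BD_x}(x)$ over the punctured disc $\BD_x^{\times}$; I then verify component-wise that its $t$-adic orders match the parahoric lattice described in Proposition \ref{parahoric dual}. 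The inverse map is constructed in the mirror fashion: given a section of $E(\sF)\otimes\omega_{\BD_x}(x)$, pull back along $\pi:\BD_{x'}\to\BD_x$ and conjugate by $\eta$; the two assignments are manifestly mutually inverse since $\Ad(\eta)\circ\Ad(\eta^{-1})=\mathrm{id}$.

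The main obstacle lies in this last verification step. For each root $\alpha$, the equivariance condition forces $\xi_\alpha\in z^{m_\alpha}\sO[[z^r]]\otimes\g_\alpha$, where $m_\alpha\in\{0,1,\ldots,r-1\}$ is determined by $m_\alpha\equiv r\langle\alpha,\lambda\rangle-1\pmod r$; after multiplication by $z^{-r\langle\alpha,\lambda\rangle}\,dz$ and rewriting in terms of $dt/t$, the minimum $t$-order of the resulting expression must be shown to equal the exponent prescribed for the $\g_\alpha$-slot of $E(\sF)$. This is the direct Lie-algebra counterpart of the identity $\hat{P}_F=G(\sK)\cap\eta G(\sO')\eta^{-1}$ from Proposition \ref{prop:bundleconversion}, and can in fact be obtained by differentiating it; but carrying it out cleanly requires a case analysis on the sign of $\alpha$ and on whether $\langle\alpha,\lambda\rangle$ equals $0$, lies strictly inside $(0,1)$ or $(-1,0)$, or attains an alcove boundary value $\pm 1$.
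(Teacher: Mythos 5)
Your proposal follows essentially the same route as the paper's proof: trivialize $E$ over $\BD_{x'}$, express $\mu_r$-invariance of the Higgs field through the root-space decomposition, conjugate by $\eta^{-1}$ so the form descends to $\BD_x$, and match the resulting valuations against the lattice defining $E(\sF)$ in Proposition \ref{parahoric dual}. The only cosmetic difference is that the paper writes $\Theta=\theta\,dz/z$ with $\theta$ of positive valuation and carries out your final "case analysis" in one stroke using the $r^{-1}\BZ$-valued extension of the valuation, so the condition $v(\theta^{\eta}_{\alpha})>-\langle\alpha,\lambda\rangle$ is read off directly rather than mode by mode.
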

\begin{proof}
	The proof is similar. Any  $\Theta\in H^{0}(\BD_{x'},\g\otimes\omega_{\BD_{x'}})^{\mu_r}$ can be written as $\theta dz/z$ with 
	$\theta\in \g(m')$. Since $\Theta$ is $\mu_r$-invariant, we have $ \gamma\theta=\rho(\gamma)^{-1}\theta\rho(\gamma)$.
	Hence  $\theta^\eta:=\eta^{-1}\theta\eta$ is an element of $\g(\sK)$. If we write this element out according to the root space decomposition:
	\begin{equation*}
	\textstyle \theta^\eta=\theta^\eta_{\gt} +\sum_{\alpha\in \sR(G,T)}u_\alpha(\theta^\eta_{\alpha}),
	\end{equation*}
	where $\theta^\eta_{\gt}\in \gt(\sK)$  and $\theta^\eta_{\alpha}\in \sK$,  then
	\begin{equation*}
	\textstyle  \theta=\theta_{\gt}+ \sum_{\alpha\in \sR(G,T)} t^{\langle\alpha,\lambda\rangle}u_\alpha(\theta^\eta_{\alpha}).
	\end{equation*}
	For  $\theta$ to have positive valuation, it is equivalent  that $\theta^\eta_{\gt}$ has positive valuation, i.e., $\theta^\eta\in \gt(m)$, and that for all roots $\alpha$, we have 
	$v(\theta^\eta_{\alpha})> -\langle\alpha,\lambda\rangle$. This means precisely that $\theta^\eta\in E(\sF)(\BD_{x})$. This proves that  the conjugation by $\eta$ gives the stated isomorphism.
\end{proof}

We  sum up the preceding as the  following theorem. It will be crucial for our description of generic fibers of parabolic Hitchin systems.

\begin{theorem}\label{proposition:local equivalence of higgs}
	Let $\mathsf{C}$ be a $G$-conjugacy class in $\Hom(\mu_r, G)$ and let  $F$ be the associated 
	facet of the fundamental alcove. Then the above construction defines  an equivalence of  categories
	\begin{equation}
	\{\text{$G$-Higgs bundles over $\BD_{x'}/\mu_r$ of type $C$} \}\xrightarrow{\sim} 
	\{\text{$\sG_{F}$-Higgs bundles over $\BD_{x}$}\}.
	\end{equation}
\end{theorem}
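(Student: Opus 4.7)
The plan is to assemble the preceding Propositions \ref{prop:bundleconversion} and \ref{prop:higgsconversion} into a functor and then verify that it is an equivalence. Given an object $(E,\Theta)$ on the left, I first choose a trivialization of $E$ so that the $\mu_r$-action has the standard form $(z,g)\mapsto (\zeta z,\rho(\zeta)g)$ with $\rho=\lambda_o^a|\mu_r$, where $\lambda=\lambda_o\otimes a/r\in F$ is the canonical alcove representative of $\mathsf{C}$ produced by Corollary \ref{cor:alcoverep}; such a trivialization exists because every $G$-bundle over $\BD_{x'}$ is trivial (Drinfeld--Simpson). I then set $\eta(z):=\lambda_o^a(z)\in G(\sK')$ and send $(E,\Theta)$ to the trivial $\sG_F$-torsor on $\BD_x$ equipped with the parahoric Higgs field $\theta^\eta$ obtained by conjugating with $\eta^{-1}$. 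This defines the candidate functor on objects.

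On morphisms, Proposition \ref{prop:bundleconversion} identifies the $\mu_r$-equivariant automorphism group $\Aut^{\mathsf{C}}(E)$ with $\hat P_F=\sG_F(\sO)$, which is exactly the automorphism group of the trivial $\sG_F$-torsor, while Proposition \ref{prop:higgsconversion} shows that the assignment $\Theta\mapsto \theta^\eta$ bijects $\mu_r$-invariant Higgs fields with parahoric Higgs fields. Together these give a bijection on both objects and morphism sets within the chosen frame. To promote this to a genuine functor (independent of trivializations up to canonical isomorphism) I invoke Remark \ref{rem:}: changing the trivialization by some $u\in G(\sO')$ replaces $\rho$ by $u\rho(\cdot)u^{-1}$, and the subsequent $G(k)$-conjugation that restores $\rho$ to its standard form modifies $\eta$ by a factor whose $\eta^{-1}$-conjugate lies in $\hat P_F$, producing a canonical isomorphism between the two outputs of $\Phi$.

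For essential surjectivity I construct a quasi-inverse. Because $\sG_F$ is smooth affine over the strictly Henselian trait $\BD_x$, every $\sG_F$-torsor is trivial, so any object on the right is isomorphic to the trivial torsor equipped with some $\vartheta\in H^0(\BD_x,E(\sF)\otimes\omega_{\BD_x}(x))$. The inverse functor takes this to the trivial $G$-bundle on $\BD_{x'}$ with the $\mu_r$-action given by $\rho=\lambda_o^a|\mu_r$ and Higgs field $\Theta:=\eta\,\vartheta\,\eta^{-1}$, which Proposition \ref{prop:higgsconversion} certifies is $\mu_r$-invariant and lands in $H^0(\BD_{x'},\g\otimes\omega_{\BD_{x'}})$. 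The two propositions then imply that this assignment is inverse to $\Phi$ up to canonical isomorphism. The only nontrivial obstacle is the verification of well-definedness modulo the auxiliary trivialization: this amounts to the gauge-theoretic calculation already carried out in the proof of Proposition \ref{prop:bundleconversion} on the generators $T(\sO)$ and $U_\alpha(f)$ of $\hat P_F$, which I would reproduce to confirm that any two choices of trivialization yield canonically isomorphic images, compatibly with composition.
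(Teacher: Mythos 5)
Your proposal is correct and follows essentially the same route as the paper: the paper's own justification of this theorem is simply to "sum up" Propositions \ref{prop:bundleconversion} and \ref{prop:higgsconversion}, exactly the two inputs you assemble, together with the triviality of $G$-bundles over $\BD_{x'}$ and of $\sG_F$-torsors over the strictly Henselian base. Your write-up merely makes explicit the functoriality and the independence of the chosen trivialization, which the paper leaves implicit.
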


In the next subsection, we will obtain a global version of this correspondence. 
In order to discuss generic fibers of parahoric Hitchin maps, we shall define an analogue of the commutative group scheme $\sJ_{a}$ over the formal disc $\BD_{x'}$ as follows:
\begin{definition}\label{defn:centralizer over disc stack}
	Given $a\in H^{0}(\BD_{x'},\gc\otimes\omega_{\BD_{x'}}^{\times})^{\mu_r}$, we define $\sJ_{a,\BD_{x'}}:=a^{*}(\sJ\times_{\mathbb{G}_{m}}\omega_{\BD_{x'}}^{\times})$, which is a smooth commutative group scheme over $\BD_{x'}$ and there is a $\mu_r$ action on $\sJ_{a,\BD_{x'}}$ induced by $\mu_r$ action on $\BD_{x'}$.
\end{definition}
By \cite[Lemma 2.4.2]{Ngo10}, there is an embedding $\sJ\rightarrow (\Res_{\gc}^{\gt}T)^{W}$, here $\Res_{\gc}^{\gt}T$ is the Weil restriction of $\gt\times T$ to $\gc$. It is not difficult to see that $\Ad_{\eta^{-1}}$ also acts on $\Res_{\gc}^{\gt}T$. Since the Weyl group action on $\Res_{\gc}^{\gt}T$ is induced from its action on $\gt$, so $\Ad_{\eta^{-1}}$ action is compatible with the Weyl group action. Then $Ad_{\eta^{-1}}\sJ_{a,\BD_{x'}}$ is a well defined group scheme over $\BD_{x'}$ isomorphic to $\sJ_{a,\BD_{x'}}$. 
Since  $r$ is coprime with $\chr(k)$, a result by Edixhoven \cite[Proposition 3.5]{Ed92} implies  that the group scheme $(\pi_{*}Ad_{\eta^{-1}}\sJ_{a,\BD_{x'}})^{\mu_r}$ is smooth over $\BD_{x}$. To sum up:

\begin{proposition}\label{prop:monodromic centralizer}
	The group scheme $(\pi_{*}Ad_{\eta^{-1}}\sJ_{a,\BD_{x'}})^{\mu_r}$ is well defined and is a smooth commutative group scheme over $\BD_{x}$.
\end{proposition}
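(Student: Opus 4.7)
The plan is to verify the claim in three stages: first, well-definedness of the $\mu_r$-action on $Ad_{\eta^{-1}}\sJ_{a,\BD_{x'}}$; second, smoothness of its pushforward along $\pi$; and third, preservation of smoothness under taking $\mu_r$-invariants.

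For the first stage, the $\mu_r$-invariance of $a$ together with the functoriality of $\sJ$ endow $\sJ_{a,\BD_{x'}}=a^*(\sJ\times_{\mathbb{G}_m}\omega_{\BD_{x'}}^\times)$ with a natural $\mu_r$-equivariant structure covering the Galois action on $\BD_{x'}$. One transports this action via conjugation by $\eta^{-1}$. Since $\eta$ satisfies $\eta(\zeta z)=\rho(\zeta)\eta(z)$ for $\zeta\in\mu_r$, this conjugation a priori twists the $\mu_r$-action by inner conjugation by $\rho(\zeta)^{-1}\in T$. However, via the embedding $\sJ\hookrightarrow(\Res_\gc^\gt T)^W$, inner conjugation by any $T$-valued section is trivial on $(\Res_\gc^\gt T)^W$ by the commutativity of $T$, and therefore trivial on $\sJ$. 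Hence the twisted action coincides with the original one, and $Ad_{\eta^{-1}}\sJ_{a,\BD_{x'}}$ acquires an unambiguous $\mu_r$-action compatible with the Galois action on $\BD_{x'}$.

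For the second stage, the morphism $\pi:\BD_{x'}\to\BD_x$ is finite flat of degree $r$, and pushforward along it agrees with Weil restriction. Weil restriction preserves smoothness, affineness, and commutative group structure, so $\pi_*(Ad_{\eta^{-1}}\sJ_{a,\BD_{x'}})$ is a smooth affine commutative group scheme over $\BD_x$, with a functorially induced $\mu_r$-action (combining the action from the first stage with the Galois action on the fibers of $\pi$).

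For the third stage, since $r$ is coprime to $\chr(k)$, Edixhoven's theorem \cite[Proposition 3.5]{Ed92} applies: the fixed-point subscheme of a smooth scheme under a finite group action whose order is invertible on the base is smooth and closed. Therefore $(\pi_*Ad_{\eta^{-1}}\sJ_{a,\BD_{x'}})^{\mu_r}$ is a smooth closed subgroup scheme over $\BD_x$, with commutativity inherited from $\sJ$. The principal obstacle is stage one---justifying well-definedness of the $\mu_r$-action when $\eta$ itself is not $\mu_r$-equivariant; the resolution exploits the commutativity of $T$, which makes the required twist by $\rho(\zeta)^{-1}\in T$ act trivially on the ambient $(\Res_\gc^\gt T)^W$.
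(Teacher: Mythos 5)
Your proof follows essentially the same route as the paper's: both hinge on Ng\^o's embedding $\sJ\hookrightarrow(\Res_\gc^\gt T)^W$ to make the twist $\Ad_{\eta^{-1}}$ well defined integrally over $\BD_{x'}$ (the paper phrases this as $\Ad_{\eta^{-1}}$ acting on $\Res_\gc^\gt T$ compatibly with the $W$-action, while you phrase it via the commutativity of $T$ neutralizing the $\rho(\zeta)^{-1}$-twist --- the same mechanism), and both then invoke Edixhoven \cite[Proposition 3.5]{Ed92}, using that $r$ is prime to $\chr(k)$, to obtain smoothness of the $\mu_r$-fixed points. Your intermediate step that $\pi_*$ is a Weil restriction along a finite flat morphism and hence preserves smoothness, affineness and the commutative group structure is left implicit in the paper but is exactly what is needed before applying Edixhoven's result.
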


Now we have made preparations for our description of generic fibers of parahoric Hitchin maps.
\subsection{Hitchin System over Root Stacks}

In this subsection, we define Higgs bundles over a root stack and relate them with parahoric Higgs bundles over the underlying coarse moduli space, which is our curve $C$.  For the definition of a general root stack we refer to the Appendix.

Let $(D,\sF)$ be a parahoric data as before, and suppose for each $x\in D$ given  a $\lambda(x)\in F(x)\cap (X_*(T)\otimes \BZ_{(p)})$.
We choose a positive integer $r$ not divisible by $p$ such that $r\lambda(x)\in X_{*}(T)$ for all $x\in D$. In view of Corollary \ref{cor:alcoverep} this amounts to giving  for  each $x\in D $ a conjugacy class in $\Hom(\mu_r, G)$ which we denote as $\mathsf{C}(x)$. Thus the parabolic data $(D,\sF)$ gives rise to a map $\mathsf{C}:D\rightarrow \{\text{Conjugacy class of maps} \Hom(\mu_r,G)\}$.

\begin{definition}\label{root stack}
	We associate with $(D,\sF,r)$ an $r$-th root stack $\mathscr{C}$ which is defined as the category fibered in groupoids over $C$ whose objects for the $C$-scheme $f:Y\to C$ are the triples $(\sL,\phi,s)$ where
	$\sL$ is a line bundle over $Y$, $\phi$ is an isomorphism of line bundles $\phi: \sL^{\otimes r}\simeq f^{*}\sO_{C}(D)$ and $s\in H^{0}(Y,\sL)$ is such that $\phi(s^{r})$ is the canonical section $1\in H^{0}(Y,f^{*}\sO_{X}(D))$. We denote the natural map $\pi:\mathscr{C}\rightarrow C$. 
\end{definition}
For more details about the following facts we refer the reader to \cite{Beh14} or the Appendix.
\begin{itemize}
	\item[(i)]$\mathscr{C}$ is proper and the natural map $p:\mathscr{C}\rightarrow C$ is the formation of its coarse moduli space. If $r$ is invertible in $k$, then $\ssC$ is a Deligne-Mumford stack; 
	\item[(ii)] $\mathscr{C}$ has a canonical line bundle $\omega_{\mathscr{C}}$  (sometimes simply denoted as $\omega$).
\end{itemize}

For each $x\in D$, we let  $\BD_{x'}$ be as in Subsection \ref{subsection:quotient disc};  so $\BD_{x'}\to \BD_x$ is a ramified covering with ramification index $r-1$. We put $U:=C\ssm D$, so that we have a natural morphism $p:U\sqcup(\sqcup_{x\in D} \BD_{x'})\rightarrow C$.
Note that the line bundle $(\sO_{U},\{\sO_{\BD_{x'}}(x')\}_{x\in D})$ over $U\sqcup(\sqcup_{x\in D} \BD_{x'})$ has the property that its $r$th tensor power is isomorphic to $ p^{*}\sO_{C}(D)$. This means that $p:U\sqcup(\sqcup_{x\in D} \BD_{x'})\rightarrow C$ factors through $\mathscr{C}$. Since $\mu_r$ acts freely on $\BD_{x'}^{\times}$ with  quotient naturally identified with $\BD_{x}^{\times}=U\cap\BD_{x}$ for each $x\in D$, we can glue $U$ with $\BD_{x'}/\mu_r$ along $\BD_{x}^{\times}$ for all $x\in D$. We conclude:
\begin{proposition}\label{covers of root stack}
	The root stack $\mathscr{C}$ is isomorphic to $U\sqcup_{\BD_{x}^{\times},x\in D}(\BD_{x'}/\mu_r)$.
\end{proposition}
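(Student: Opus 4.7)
The plan is to build a morphism from the glued space $U\sqcup_{\BD_{x}^{\times},x\in D}(\BD_{x'}/\mu_{r})$ to $\mathscr{C}$ and verify it is an isomorphism by working locally on $C$, namely over $U$ and over each formal neighborhood $\BD_{x}$ for $x\in D$. The resulting maps will automatically agree on the overlaps $\BD_{x}^{\times}$, so descent will assemble them into the claimed isomorphism.

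Over $U$, since $D\cap U=\emptyset$, the pullback of $\sO_{C}(D)$ to any $f:Y\to U$ is canonically trivial and the canonical section is nowhere vanishing. Thus for any object $(\sL,\phi,s)$ of $\mathscr{C}|_{U}(Y)$, the section $s^{r}$ is nowhere zero, hence $s$ is itself nowhere zero and trivializes $\sL$ uniquely compatibly with $\phi$. This shows $\mathscr{C}|_{U}\simeq U$.

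Over $\BD_{x}$, I produce the tautological object on $\BD_{x'}$ by taking $\sL=\sO_{\BD_{x'}}(x')$ with its canonical section $s_{x'}$ vanishing to first order at $x'$, and letting $\phi_{0}$ come from the identity $\sO_{\BD_{x'}}(rx')=\pi^{*}\sO_{\BD_{x}}(x)$ (recall that $x'$ is the unique point in $\pi^{-1}(x)$, with ramification index $r$). Then $\phi_{0}(s_{x'}^{r})$ is the pullback of the canonical section of $\sO_{\BD_{x}}(x)$. The natural $\mu_{r}$-action on $\BD_{x'}$ (by $z\mapsto\zeta z$) lifts canonically to $(\sO_{\BD_{x'}}(x'),s_{x'})$ in a manner compatible with $\phi_{0}$, since $x'$ is a fixed divisor, so the morphism descends to $[\BD_{x'}/\mu_{r}]\to \mathscr{C}\times_{C}\BD_{x}$.

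The main step, and the one I expect to be the principal obstacle, is to show that this descended morphism is an equivalence. The inverse is constructed directly: given $(\sL,\phi,s)\in \mathscr{C}|_{\BD_{x}}(Y)$, the sheaf on $Y$ of isomorphisms between the triple $(\sL,\phi,s)$ and the pullback of the tautological triple defines a $\mu_{r}$-torsor $P\to Y$ equipped with a $\mu_{r}$-equivariant map $P\to \BD_{x'}$, hence an object of $[\BD_{x'}/\mu_{r}](Y)$. That this is indeed inverse to the map above is the standard local model for root stacks and can be extracted from the construction recalled in Definition \ref{root stack}, or from references such as \cite{Beh14} and the Appendix. With this local equivalence in hand, on $\BD_{x}^{\times}$ the $\mu_{r}$-action is free, so $[\BD_{x'}/\mu_{r}]|_{\BD_{x}^{\times}}\simeq \BD_{x}^{\times}$, and both the map from $U$ and the map from $[\BD_{x'}/\mu_{r}]$ restrict there to the same trivializing datum. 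Stack descent then glues these local equivalences into the asserted isomorphism.
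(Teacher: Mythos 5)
Your proposal is correct and follows essentially the same route as the paper, which constructs the map by observing that the line bundle $(\sO_{U},\{\sO_{\BD_{x'}}(x')\}_{x\in D})$ has $r$-th tensor power isomorphic to $p^{*}\sO_{C}(D)$ (hence $U\sqcup(\sqcup_{x}\BD_{x'})\to C$ factors through $\mathscr{C}$) and then glues $U$ with the quotients $\BD_{x'}/\mu_{r}$ along $\BD_{x}^{\times}$. You additionally spell out the local verification over each $\BD_{x}$ via the standard $\mu_{r}$-torsor of $r$-th roots of the trivialization, which the paper leaves implicit by citing the general theory of root stacks.
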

Intuitively, $\mathscr{C}$ is decomposed into $U$ and for each $x\in D$ a copy of $B\mathbb{\mu}_{r}$, where $B\mathbb{\mu}_{r}$ is the classifying stack of $\mathbb{\mu}_{r}$.
\begin{remark}
	We have a cover $U\sqcup(\sqcup_{x\in D} \BD_{x'})$ of $\mathscr{C}$. It is not an \'etale cover because it is not of finite type. If we want an etale cover, then we need to replace formal discs $\{\BD_{x}\}_{x\in D}$ by an appropriate etale open neighbourhood of $D$ in $C$. But the use of a formal disk  will make our arguments  easier.
\end{remark}

%

Now we can describe $G$ bundles over $\mathscr{C}$ of type $\mathsf{C}$.
\begin{propdef}\label{G over root}
	Let $G$ be semi-simple and simply connected as before and let $\mathsf{C}$ assign to each $x\in D$ a $G$-conjugacy class $\mathsf{C}(x)$ in $\Hom(\mu_r, G)$.
	Then by uniformization theorem in \cite{DS95}, and Beauville and Laszlo's gluing lemma in \cite{BL94}, $G$ bundles over $U$ and $\mathbb{D}_{x'}$ are trivial. As a result, to give a principal $G$-bundle over $\mathscr{C}$ \emph{of type $\mathsf{C}$} amounts to giving a triple 
	\begin{equation}
	(E_{U}, \{E_{\BD_{x'}}\}_{x\in D},\{\phi_{x}\}_{x\in D})
	\end{equation}
	where $E_{U}$, resp. $E_{\BD_{x'}}$, is a $G$-bundle over $U$, resp. a $G$ bundle over $\BD_{x'}/\mu_r$ of type $\mathsf{C}(x)$, and 
	$\phi_{x}:E_{\BD^\times_{x'}}\rightarrow \pi^{*}E_{U}|_{\BD_{x}^{\times}}$ is a $\mu_r$-equivariant isomorphism.	
\end{propdef}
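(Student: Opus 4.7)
My plan is to combine the decomposition $\mathscr{C} \cong U \sqcup_{\BD_x^\times, x\in D}(\BD_{x'}/\mu_r)$ of Proposition \ref{covers of root stack} with fpqc descent for principal $G$-bundles. To set up the equivalence, I would first start from a $G$-bundle $E$ on $\mathscr{C}$, restrict it to $U$ to obtain $E_U$, and pull it back along the atlas morphism $\BD_{x'} \to \BD_{x'}/\mu_r \hookrightarrow \mathscr{C}$ to obtain, for each $x \in D$, a $G$-bundle $E_{\BD_{x'}}$ on $\BD_{x'}$ equipped with its canonical $\mu_r$-action; the transition data on the overlap $\BD_x^\times$ supplies the required $\phi_x$, which is automatically $\mu_r$-equivariant after pulling back to $\BD_{x'}^\times$.

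The next step is to identify the \emph{type} of $E$ at $x$. By Drinfeld--Simpson \cite{DS95}, any principal $G$-bundle on the smooth affine curve $U$ is trivializable; likewise a $G$-bundle on $\BD_{x'} = \Spec \sO'$ is trivial since $G$ is smooth with connected fibers and $\sO'$ is a complete DVR with algebraically closed residue field. Fixing such trivializations, the $\mu_r$-action on $E_{\BD_{x'}}$ is classified, in the sense of Definition \ref{def:orbifoldbundle} and Remark \ref{rem:}, by a homomorphism $\rho_x \in \Hom(\mu_r, G)$ well-defined up to $G$-conjugacy. I would then \emph{define} the type of $E$ at $x$ to be this conjugacy class, and say $E$ is of type $\mathsf{C}$ whenever this class equals the prescribed $\mathsf{C}(x)$ for every $x \in D$.

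In the reverse direction, given a triple $(E_U, \{E_{\BD_{x'}}\}_{x\in D}, \{\phi_x\}_{x\in D})$, I would appeal to Beauville--Laszlo's patching theorem \cite{BL94} along the atlas $U \sqcup (\sqcup_{x\in D}\BD_{x'}) \to \mathscr{C}$: the triple produces a $G$-bundle on the atlas whose $\mu_r$-equivariant structures on each $\BD_{x'}$, together with the $\mu_r$-equivariance of the $\phi_x$, constitute the descent datum needed to produce a bundle on the quotient $\mathscr{C}$. Since principal $G$-bundles form an fpqc stack, this gluing is well-defined up to unique isomorphism, and a routine check of functoriality on morphisms completes the equivalence.

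The main obstacle I anticipate is transporting Beauville--Laszlo's patching, which is classically stated for $G$-bundles on schemes glued along formal neighborhoods of finitely many points, into the stacky context where $\BD_{x'}/\mu_r$ replaces a formal disc. The key technical input is that the self-fiber product of the atlas over $\mathscr{C}$ decomposes using the free $\mu_r$-action on $\BD_{x'}^\times$, so that the descent condition reduces cleanly to the $\mu_r$-equivariant structures already encoded in the triple; once this reduction is in place, the gluing is formal.
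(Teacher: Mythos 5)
Your proposal is correct and follows essentially the same route the paper intends: the decomposition of $\mathscr{C}$ from Proposition \ref{covers of root stack}, triviality of $G$-bundles over $U$ (Drinfeld--Simpson) and over $\BD_{x'}$, identification of the type via Definition \ref{def:orbifoldbundle} and Remark \ref{rem:}, and Beauville--Laszlo gluing along the punctured discs with $\mu_r$-equivariance supplying the descent data. The paper states this as a Proposition-Definition with only the citations inline, so your write-up simply makes explicit the descent argument the paper leaves implicit.
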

%

Let $\mathscr{C}$ be as before and denote its canonical line bundle by $\omega_{\mathscr{C}}$.
We also fix for every $x\in D$ a $G$-conjugacy class $\mathsf{C}(x)\subset \Hom(\mu_r, G)$ and denote the map $x\in D\mapsto \mathsf{C}(x)$ by $\mathsf{C}$. We then write $\lambda(x)\in (X_*(T)\otimes r^{-1}\BZ)\cap \Al$ for the corresponding rational character, $F(x)$ the facet the fundamental alcove which contains $\lambda(x)$ in its relative interior. If the conjugacy class $\mathsf{C}(x)$ is standard, then we write $P(x)$ for the associated parabolic. Recall that $\sF: x\in D\mapsto F(x)$ defines a set of parahoric data for $(C,D)$ which also give rises a map $\mathsf{C}:x\in D\mapsto \mathsf{C}(x)$.

\begin{definition}[Higgs bundle over a root stack]
	A \emph{$G$-Higgs bundle over  $\mathscr{C}$} of type $\mathsf{C}$ is a pair $(E,\theta)$, where $E$ is a $G$-bundle over $\mathscr{C}$ of type $\mathsf{C}$ and $\theta\in H^{0}(\mathscr{C},E(\g)\otimes\omega_{\mathscr{C}})$.
\end{definition}
We refer to  \cite{Beh14} for a good introduction to the  cohomology of a coherent sheaf on a root stack. The following theorem is a global version of Theorem \ref{proposition:local equivalence of higgs} whose proof is direct.

\begin{theorem}\label{Higgs equi}
	By the description in Proposition \ref{G over root}, the construction in Theorem \ref{proposition:local equivalence of higgs} can be applied to define an equivalence:
	\begin{equation}
	\{\text{$G$-Higgs bundles over $\mathscr{C}$ of type $\mathsf{C}$}\}\xrightarrow{\sim} \{\text{$\sG_{\sF}$ Higgs bundles over C}\}
	\end{equation}
\end{theorem}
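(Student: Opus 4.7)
The plan is to combine Proposition~\ref{covers of root stack}, Proposition~\ref{G over root}, and Theorem~\ref{proposition:local equivalence of higgs} into a gluing argument: the local equivalence already does all the serious work, so globally one only needs to check that the change of trivialization by $\eta$ is compatible with the \emph{gluing data} $\phi_x$ along the punctured disks $\BD_x^\times$.

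First, I would decompose a $G$-Higgs bundle $(E,\theta)$ over $\mathscr{C}$ of type $\mathsf{C}$ via the atlas of Proposition~\ref{covers of root stack}. By Proposition~\ref{G over root} this produces a triple $(E_U,\{E_{\BD_{x'}}\},\{\phi_x\})$, where $E_U$ is a $G$-bundle on $U=C\ssm D$, each $E_{\BD_{x'}}$ is a $G$-bundle on $\BD_{x'}/\mu_r$ of type $\mathsf{C}(x)$, and $\phi_x$ is a $\mu_r$-equivariant isomorphism on $\BD_x^\times$. The section $\theta$ restricts in the same way: $\theta_U\in H^0(U,E_U(\g)\otimes\omega_U)$ and, at each $x\in D$, a $\mu_r$-equivariant section $\Theta_x\in H^0(\BD_{x'},E_{\BD_{x'}}(\g)\otimes\omega_{\BD_{x'}})$, these being compatible via $\phi_x$ on $\BD_x^\times$.

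Second, at each $x\in D$ I apply Theorem~\ref{proposition:local equivalence of higgs} (i.e.\ Propositions~\ref{prop:bundleconversion} and~\ref{prop:higgsconversion}) to convert $(E_{\BD_{x'}},\Theta_x)$ into an $\sG_{F(x)}$-Higgs bundle on the formal disk $\BD_x$; concretely this is conjugation by the element $\eta(z)=\lambda_o^a(z)\in G(\sK'_x)$ representing the rational cocharacter $\lambda(x)$. Over the punctured disk $\BD_x^\times$ the group scheme $\sG_{F(x)}$ specializes to the constant group $G$, and under this specialization the identification $\Phi\mapsto\phi^\eta$ identifies the generic fiber of the new $\sG_{F(x)}$-torsor with $E_U|_{\BD_x^\times}$ precisely by composing $\phi_x$ with conjugation by $\eta$. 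Hence the gluing datum $\phi_x$ on the $\mathscr{C}$-side is transported, in a canonical way, into a gluing datum between $E_U|_{\BD_x^\times}$ and the new local $\sG_{F(x)}$-torsor. Invoking the gluing Proposition~\ref{grp over curve} (and the construction of $\sG_{\sF}$ in Theorem~\ref{Bruhat-Tits Group Schemes over Curves}) now assembles these data into a $\sG_{\sF}$-torsor over $C$; the Higgs fields glue by the same recipe thanks to Proposition~\ref{prop:higgsconversion}, yielding a parahoric Higgs bundle.

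For the inverse direction, given a $\sG_{\sF}$-Higgs bundle $(E',\theta')$ over $C$, I restrict to $U$ (obtaining a $G$-Higgs bundle since $\sG_{\sF}|_U=G\times U$) and to each $\BD_x$, apply the local inverse of Theorem~\ref{proposition:local equivalence of higgs}, pull back to $\BD_{x'}$ to obtain a $\mu_r$-equivariant $G$-Higgs bundle of the prescribed type, and use conjugation by $\eta^{-1}$ on $\BD_x^\times$ to manufacture the gluing $\phi_x$. Both constructions are functorial in $(E,\theta)$ and $(E',\theta')$, so the two functors are quasi-inverse. The main (really the only) obstacle is bookkeeping: verifying on the overlap $\BD_x^\times$ that the conjugation-by-$\eta$ identification intertwines the descent isomorphism $\pi^*E_U|_{\BD_x^\times}\simeq E_{\BD_{x'}}|_{\BD_{x'}^\times}$ coming from Proposition~\ref{G over root} with the trivialization $\sG_{F(x)}|_{\BD_x^\times}\simeq G\times\BD_x^\times$ used in the construction of $\sG_{\sF}$. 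Since $\eta$ is chosen once and for all and both identifications are canonically determined by it, this is a routine compatibility check, after which the equivalence of categories follows immediately.
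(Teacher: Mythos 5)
Your proposal is correct and follows exactly the route the paper intends: the paper's own ``proof'' is just the remark that the theorem is a global version of Theorem \ref{proposition:local equivalence of higgs} ``whose proof is direct,'' i.e.\ decompose via Proposition \ref{G over root}, apply the local conjugation-by-$\eta$ equivalence on each formal disc, and reglue. You have simply written out the bookkeeping (including the compatibility of $\eta$-conjugation with the gluing data $\phi_x$ over $\BD_x^\times$) that the paper leaves implicit.
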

%
Now we can give an interesting corollary:
\begin{corollary}\label{cor:image}
	The image of parahoric Hitchin map is indeed an affine space.
\end{corollary}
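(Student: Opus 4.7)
The plan is to transport the question through the equivalence of Theorem \ref{Higgs equi}: the parahoric Hitchin map $h_{\sF}$ on $\Hig_{\sG_{\sF}}$ is identified, compatibly with Hitchin targets, with the root-stack Hitchin map $h_{\lambda}: \Hig_{\mathscr{C}} \to \sA_{\mathscr{C}}$. So it suffices to show (i) $\sA_{\mathscr{C}}$ is a $k$-vector space, (ii) $\sA_{\mathscr{C}}$ embeds linearly into the ambient $H^{0}(C,\gc\otimes\omega_{C}(D))$, and (iii) $h_{\lambda}$ surjects onto $\sA_{\mathscr{C}}$. Combining these three facts exhibits the image of $h_{\sF}$ as the linear image of a vector space, hence as an affine space.

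\textbf{Step 1: $\sA_{\mathscr{C}}$ is affine.} Under Assumption \ref{blas}, $k[\gc]\simeq k[\gt]^{W}$ is a polynomial algebra with homogeneous generators of degrees $1+m_{1},\dots,1+m_{n}$, so $\mathbb{G}_{m}$ acts on $\gc$ with these weights. Writing $\omega=\omega_{\mathscr{C}}$, this gives a splitting
\begin{equation*}
\gc\times_{\mathbb{G}_{m}}\omega^{\times}\;\cong\;\bigoplus_{i=1}^{n}\omega^{\otimes(1+m_{i})},
\end{equation*}
so $\sA_{\mathscr{C}}=\bigoplus_{i=1}^{n}H^{0}(\mathscr{C},\omega^{\otimes(1+m_{i})})$ is a finite-dimensional $k$-vector space.

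\textbf{Step 2: linear embedding.} For each $d\ge 1$, the pushforward $\pi_{*}\omega_{\mathscr{C}}^{\otimes d}$ is a coherent subsheaf of $\omega_{C}(D)^{\otimes d}$, so taking global sections gives a linear injection $H^{0}(\mathscr{C},\omega_{\mathscr{C}}^{\otimes d})\hookrightarrow H^{0}(C,\omega_{C}(D)^{\otimes d})$. Summing over the weights yields a linear injection $\sA_{\mathscr{C}}\hookrightarrow H^{0}(C,\gc\otimes\omega_{C}(D))$. By the compatibility of the equivalence in Theorem \ref{Higgs equi} with the Hitchin maps (Theorem \ref{thm:conclusion in the intro}), the image of $h_{\sF}$ coincides with the image of $h_{\lambda}$ under this linear injection.

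\textbf{Step 3: surjectivity of $h_{\lambda}$.} The Kostant section $\epsilon:\gc\rightarrow\g^{reg}$ is $\mathbb{G}_{m}$-equivariant for the weighted action, so it induces a morphism $\epsilon_{\omega}:\gc\times_{\mathbb{G}_{m}}\omega^{\times}\rightarrow\g\times_{\mathbb{G}_{m}}\omega^{\times}$. For $a\in\sA_{\mathscr{C}}$, the composite $\epsilon_{\omega}\circ a:\mathscr{C}\rightarrow\g\times_{\mathbb{G}_{m}}\omega^{\times}$ defines a regular Higgs field supported on a principal $G$-bundle over $\mathscr{C}$; the characteristic $\chi\circ(\epsilon_{\omega}\circ a)=a$ by construction. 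To place this lift in the correct component of $\Hig_{\mathscr{C}}$ (namely of type $\mathsf{C}$), one modifies the local presentation over each $\BD_{x'}$ by conjugation with $\eta=\lambda_{o}^{a}$ as in Proposition \ref{prop:higgsconversion}; the outputs on the charts $U$ and $\{\BD_{x'}/\mu_{r}\}_{x\in D}$ agree on overlaps (because conjugation by $\eta$ is an inner twist trivial on $\BD_{x}^{\times}$ after the rescaling), giving a Higgs bundle of type $\mathsf{C}$ with Hitchin image $a$.

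\textbf{Main obstacle.} The crux is Step 3: the naive Kostant section produces a Higgs bundle on the \emph{trivial} $G$-bundle over $\mathscr{C}$, whose monodromy type at $D$ is trivial, not the prescribed $\mathsf{C}$. The fix is the $\eta$-conjugation of Proposition \ref{prop:higgsconversion}, which, on each formal neighbourhood of $x\in D$, converts a $\sG_{\sF(x)}$-valued Higgs field into a $\mu_{r}$-equivariant Higgs field of type $\mathsf{C}(x)$ on the trivial bundle over $\BD_{x'}$. Verifying that one can glue these local modifications into a global section of $h_{\lambda}$ is the main technical point; after this, Steps 1, 2 and 3 combine to identify the image of $h_{\sF}$ with the linear subspace $\sA_{\mathscr{C}}\subset H^{0}(C,\gc\otimes\omega_{C}(D))$, proving the corollary.
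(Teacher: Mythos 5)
Your argument is correct and is essentially the paper's own (implicit) proof: the corollary is derived directly from Theorem \ref{Higgs equi}, identifying the image of $h_{\sF}$ with the image of the root-stack Hitchin map, which is all of the vector space $\sA_{\mathscr{C}}\cong\bigoplus_{i}H^{0}(\mathscr{C},\omega_{\mathscr{C}}^{\otimes(1+m_{i})})$ by the Kostant section. The one caveat (shared with the paper's own Section 2) is that the Kostant section is $\mathbb{G}_{m}$-equivariant only for the $\check{\rho}$-twisted action on $\g$, so constructing $\epsilon_{\omega}$ strictly requires a square root of $\omega_{\mathscr{C}}$ and yields a $\check{\rho}$-twisted rather than trivial bundle; this does not affect surjectivity onto $\sA_{\mathscr{C}}$, hence not the conclusion.
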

For a parahoric Hitchin system over a Riemann surface, with structure group a semisimple simply connected algebraic group, Baraglia, Kamgarpour and Varma in \cite[Theorem 10]{BKV18} proved that the image has half dimension of the moduli stack $\Hig_{\sG_{\sF}}$. Our result actually can show that the image is indeed an affine space. 
\begin{remark}
	We need to point out that the Theorem \ref{Higgs equi} does not hold, if $G$ is not simply-connected. For example, if $G=\SO_{2n}$, Baraglia and Kamgarpour \cite{BK18} proved that if the parabolic data at a marked point corresponds to a very even orbit, the image of the parabolic Hitchin map is not irreducible. Actually, if $G$ is not simply-connected, the moduli stack $\Hig_{\sG_{\sF}}$ is not connected.
\end{remark}
Given an affine $k$-scheme $Y=\Spec R$,  $(Y\times U, \{\BD_{x,R}\}_{x\in D})$ can be treated as a cover of $Y\times\mathscr{C}$, here $\BD_{x,R}\simeq \Spec R[[t]]$. And similarly we can define $G$-Higgs bundles over $Y\times\mathscr{C}$. By Theorem \ref{rep of para hig} and Theorem \ref{Higgs equi}, we have the following:
\begin{corollary}
	The moduli functor $\mathbf{H}_{\mathscr{C}}: \mathsf{Sch}/k\to \mathsf{Groupoid}$ which assigns to an affine $k$-scheme $Y$ the groupoid of pairs
	$(\sE,\Theta)$, where $\sE$ is a $G$-bundle over $Y\times \mathscr{C}$  and $\Theta$ is an associated Higgs field, is represented by an algebraic stack $\Hig_{\mathscr{C}}$.
\end{corollary}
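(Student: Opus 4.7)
The plan is to deduce representability of $\Hig_{\mathscr{C}}$ from the already-established representability of $\Hig_{\sG_\sF}$ (Theorem \ref{rep of para hig}) by promoting the equivalence of Theorem \ref{Higgs equi} to an equivalence of moduli stacks over an arbitrary affine base $Y=\Spec R$. So first I would make precise the $R$-linear (relative) version of the local equivalence. Using Proposition \ref{covers of root stack} base-changed to $Y$, we have a presentation
\begin{equation}
Y\times\mathscr{C}\;\cong\;(Y\times U)\,\sqcup_{Y\times\BD_{x}^{\times}}\,(Y\times\BD_{x'})/\mu_r,
\end{equation}
so a $G$-Higgs bundle over $Y\times\mathscr{C}$ of type $\mathsf{C}$ amounts, by the obvious relative variant of Proposition-Definition \ref{G over root}, to a tuple $(E_U,\Theta_U;\{E_{\BD_{x'},R},\Theta_{x}\}_{x\in D};\{\phi_x\}_{x\in D})$ together with the usual $\mu_r$-equivariance and gluing data.

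Next I would run the conjugation-by-$\eta$ argument of Propositions \ref{prop:bundleconversion} and \ref{prop:higgsconversion} in families: since $\eta=\lambda_o^a(z)\in G(\sK')\subset G(R((z)))$ is defined $R$-linearly, the map $\phi\mapsto\eta^{-1}\phi\eta$ and its Lie-algebra analogue make sense over $Y\times\BD_{x'}$ and carry $\mu_r$-invariant data into Galois-invariant data, i.e.\ into $\sG_{F(x)}$-Higgs bundles over $Y\times\BD_x$. Combined with the trivial identification over $Y\times U$, this yields a natural equivalence of groupoids
\begin{equation}
\mathbf{H}_{\mathscr{C}}(Y)\;\xrightarrow{\sim}\;\mathbf{H}_{\sG_\sF}(Y),
\end{equation}
functorial in $Y$. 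Descent/gluing for $G$-bundles and coherent sheaves along the fpqc cover $Y\times U\sqcup(\sqcup_x Y\times\BD_{x'})\to Y\times\mathscr{C}$ ensures that this identification is an equivalence of fibered categories, not merely of sets of isomorphism classes.

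Once we have this natural isomorphism of moduli functors, the statement is immediate: the algebraic stack $\Hig_{\sG_\sF}$ produced in Theorem \ref{rep of para hig} also represents $\mathbf{H}_{\mathscr{C}}$, so we may take $\Hig_{\mathscr{C}}:=\Hig_{\sG_\sF}$. The main obstacle I expect is bookkeeping the ``of type $\mathsf{C}$'' condition in families: a priori the conjugacy class of the local $\mu_r$-action can vary with the $Y$-point, and one must check either that the type is locally constant on $Y$ (true over a connected base for the type is a discrete invariant picked up from $\Al\cap(X_*(T)\otimes r^{-1}\BZ)$), or decompose $\Hig_{\mathscr{C}}$ as a disjoint union over types $\mathsf{C}$ and apply the above equivalence componentwise. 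Since the parahoric data $(D,\sF)$ together with the choice of $\lambda(x)$ fixes $\mathsf{C}$, this is harmless, and representability transfers as claimed.
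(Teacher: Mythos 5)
Your proposal matches the paper's own (essentially one-line) argument: the paper defines Higgs bundles over $Y\times\mathscr{C}$ via the cover $(Y\times U,\{\BD_{x,R}\}_{x\in D})$ and deduces representability directly from Theorem \ref{rep of para hig} combined with the family version of the equivalence in Theorem \ref{Higgs equi}. Your additional care about running the conjugation-by-$\eta$ argument $R$-linearly and about local constancy of the type $\mathsf{C}$ is a reasonable elaboration of details the paper leaves implicit, but the route is the same.
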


It is then clear that we now can define a  Hitchin map over the root stack $\mathscr{C}$  as a map:
\begin{equation*}
\Hig_{\mathscr{C}}\rightarrow \sH_{\sF}:=H^{0}(\mathscr{C},\omega^{\times}_{\mathscr{C}}\times_{\mathbb{G}_m}\gc),
\end{equation*}
Let us spell out that an $a\in H^{0}(\mathscr{C},\omega^{\times}_{\mathscr{C}}\times_{\mathbb{G}_m}\gc)$ is given by a triple 
\begin{equation}
(a(U),\{a(\BD_{x'})\}_{x\in D},\{\phi_{x}\}_{x\in D})
\end{equation} 
with $a(U)$ is section of $(\omega^{\times}_{X}|_{U})\times_{\mathbb{G}_m}\gc$, $a(\BD_{x'})$ an $\mu_r$ invariant section of $\omega^{\times}_{\BD_{x'}}\times_{\mathbb{G}_m}\gc$ and $\phi_{x}:\pi_{x}^{*}(a(U)|_{\BD_{x}^{\times}})\rightarrow a(\BD_{x'})|_{\BD_{x'}^{\times}}$ an $\mu_r$ equivariant isomorphism (here $\pi_{x}:\BD_{x'}\rightarrow\BD_{x}$ is the projection).
\begin{definition}
	For  every closed point $a\in\sH_{\sF}(k)$, we have a corresponding cameral cover:
	\begin{equation*}
	\tilde{\mathscr{C}}_{a}:=a^{*}(\gt\times_{\mathbb{G}_m}\omega^{\times}_{\mathscr{C}})
	\end{equation*}
\end{definition}
Then we spell out the cameral cover $\tilde{\mathscr{C}}_{a}$ as 
$(\tilde{U}_{a},\{\tilde{\BD}_{x',a}\}_{x\in D})$.
Notice that $\tilde{\BD}_{x',a}$ is $W$-cover of $\BD_{x'}$, which is also endowed with $\mu_r$ action induced from the $\mu_r$-action on $\BD_{x'}$.

The smooth affine commutative group scheme that  was introduced in the nonparabolic case in Theorem \ref{Univ cen} has now a natural generalization 
in this setting.

\begin{definition}
	We define a smooth commutative group scheme over $\mathscr{C}$:
	\begin{equation}
	\sJ_{a}:=a^{*}(\sJ\times_{\mathbb{G}_{m}}\omega_{\mathscr{C}}^{\times})
	\end{equation}
	here $\sJ\rightarrow\gc$ is the universal centralizer described in Theorem \ref{Univ cen}. 
\end{definition}
Here the pull back functor $a^{*}$ can be treated as a fiber product which is well defined for Deligne-Mumford stacks. To make it more explicit, as usual, we describe the group scheme as a system: 
\begin{equation*}
(a(U)^{*}(\sJ\times_{\mathbb{G}_{m}}\omega_{U}^{\times}),\{a(\BD_{x'})^{*}(\sJ\times_{\mathbb{G}_{m}}\omega_{\BD_{x'}}^{\times}),\phi_x^{*}\}_{x\in D}),
\end{equation*}
We  abbreviate  
\begin{equation*}
\sJ_{a,U}:=a(U)^{*}(\sJ\times_{\mathbb{G}_{m}}\omega_{U}^{\times}), \quad \sJ_{a,\BD_{x'}}:=a(\BD_{x'})^{*}(\sJ\times_{\mathbb{G}_{m}}\omega_{\BD_{x'}}^{\times}),
\end{equation*}
where we note that the latter comes endowed with a $\mu_r$-action.  Applying Proposition \ref{grp over curve} and \ref{prop:monodromic centralizer}, we thus find: 

\begin{corollary}\label{struc grp over curve}
	For a point $a\in H^{0}(\mathscr{C},\omega^{\times}_{\mathscr{C}}\times_{\mathbb{G}_{m}}\gc)$, we obtain a  smooth commutative group scheme 
	$\sJ_{a}^{\sF}$ over $C$ by gluing $\sJ_{a,U}$ and $(\pi_{*}Ad_{\eta^{-1}}\sJ_{a,\BD_{x'}})^{\mu_r})_{x\in D}$ via a natural group isomorphism
	$\sJ_{a,\BD_{x}}\simeq (\pi_{*}Ad_{\eta^{-1}}\sJ_{a,\BD_{x'}})^{\mu_r})|_{\BD_{x}^\times}$.
\end{corollary}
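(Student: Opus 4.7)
The strategy is to apply the gluing Proposition \ref{grp over curve} to assemble $\sJ_a^\sF$ from local models, with the smoothness at each marked point provided by Proposition \ref{prop:monodromic centralizer}. Over $U$, the group scheme $\sJ_{a,U}=a(U)^{*}(\sJ\times_{\mathbb{G}_{m}}\omega_{U}^{\times})$ is manifestly smooth, commutative, affine and of finite type, since these properties pass through pullback along $a(U)$ from the universal regular centralizer $\sJ\to\gc$ of Proposition-Definition \ref{Univ cen}. For each $x\in D$, Proposition \ref{prop:monodromic centralizer} gives that $(\pi_{*}\Ad_{\eta^{-1}}\sJ_{a,\BD_{x'}})^{\mu_r}$ is a smooth affine group scheme of finite presentation over $\BD_x$, with commutativity inherited from $\sJ$.

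The heart of the plan is producing, for each $x\in D$, a natural isomorphism of group schemes
\begin{equation*}
\sJ_{a,U}|_{\BD_{x}^{\times}}\;\iso\;(\pi_{*}\Ad_{\eta^{-1}}\sJ_{a,\BD_{x'}})^{\mu_r}\big|_{\BD_{x}^{\times}}.
\end{equation*}
Since $r$ is invertible in $k$, the map $\pi:\BD_{x'}^{\times}\to \BD_{x}^{\times}$ is an étale $\mu_r$-torsor, so the functor $(\pi_{*}\,-)^{\mu_r}$ realizes Galois descent along it. The definition of a point $a\in H^{0}(\mathscr{C},\omega^{\times}_{\mathscr{C}}\times_{\mathbb{G}_m}\gc)$ supplies a $\mu_r$-equivariant isomorphism $\phi_x:\pi_x^{*}(a(U)|_{\BD_{x}^{\times}})\iso a(\BD_{x'})|_{\BD_{x'}^{\times}}$, and pulling back the universal centralizer produces a $\mu_r$-equivariant identification $\pi_x^{*}\sJ_{a,U}|_{\BD_{x}^{\times}}\iso\sJ_{a,\BD_{x'}}|_{\BD_{x'}^{\times}}$. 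To transport this identification through the conjugation $\Ad_{\eta^{-1}}$, I would use that $\eta(z)=\lambda_o^{a}(z)$ takes values in the maximal torus $T(\sK')$, and that under the embedding $\sJ\inj(\Res_{\gc}^{\gt}T)^{W}$ of \cite[Lemma 2.4.2]{Ngo10}, the values of $\sJ_{a,\BD_{x'}}$ commute with anything in $T$; hence $\Ad_{\eta^{-1}}$ acts trivially on $\sJ_{a,\BD_{x'}}$ and the isomorphism above descends. Applying Proposition \ref{grp over curve}(i) to the glueing data produces $\sJ_a^{\sF}$ over $C$, smooth, affine, and commutative; the uniqueness clause guarantees naturality, which will be needed later for functoriality in $a$.

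The main technical hurdle is the verification that conjugation by $\eta^{-1}$ is compatible with the $\mu_r$-action in the required sense: although $\eta$ itself is not $\mu_r$-invariant (it satisfies $\eta(\zeta z)=\rho(\zeta)\eta(z)$), the character $\rho(\zeta)\in T(k)$ commutes with the torus-valued image of $\sJ$ inside $(\Res_{\gc}^{\gt}T)^W$, so $\Ad_{\eta^{-1}}$ descends to a well-defined isomorphism that intertwines the natural $\mu_r$-action on $\sJ_{a,\BD_{x'}}$ with the one needed for Proposition \ref{prop:monodromic centralizer}. I would record this compatibility carefully by working on the regular semisimple open locus, where $\sJ_{a,\BD_{x'}}$ is an étale twist of $T$, and then extending by closedness of the graph to all of $\BD_{x'}$. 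Once this is done, the gluing becomes formal.
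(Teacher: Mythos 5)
Your overall architecture is the same as the paper's: the corollary is obtained by feeding the local pieces $\sJ_{a,U}$ and $(\pi_{*}\Ad_{\eta^{-1}}\sJ_{a,\BD_{x'}})^{\mu_r}$ into the gluing statement of Proposition \ref{grp over curve}, with smoothness at the marked points supplied by Proposition \ref{prop:monodromic centralizer}. Since the paper gives essentially no further detail, the substance of your proposal is your construction of the gluing isomorphism over $\BD_{x}^{\times}$, and there your argument contains a genuine error. You claim that because $\eta$ takes values in $T(\sK')$ and $\sJ$ embeds into $(\Res_{\gc}^{\gt}T)^{W}$, the values of $\sJ_{a,\BD_{x'}}$ commute with everything in $T$ and hence $\Ad_{\eta^{-1}}$ acts trivially. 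This is false. Where the Higgs field is regular, $\sJ_{a,\BD_{x'}}$ is identified (via $\chi^{*}\sJ\simeq\sI$ over $\g^{reg}$) with the centralizer $Z_{G}(\theta)$, whose fibre at $y$ is a maximal torus containing $\theta(y)$ --- not $T$, and not commuting with $T$, unless $\theta(y)\in\gt$. The group scheme $(\Res_{\gc}^{\gt}T)^{W}$ is not a subgroup scheme of $G\times\gc$, so ``commuting with $T$'' does not parse for its points; that embedding only identifies $\sJ_{a}$ with an abstract torus after pullback to the cameral cover. Consequently $\Ad_{\eta^{-1}}$ genuinely moves $Z_{G}(\theta)$ to $Z_{G}(\Ad_{\eta^{-1}}\theta)$ and changes both the $\mu_r$-equivariant structure and the integral structure at the origin, exactly as in Proposition \ref{prop:bundleconversion}, where the same conjugation converts the equivariant automorphism group into the parahoric $\hat{P}_{F}$, a different subgroup of $G(\sK)$. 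If $\Ad_{\eta^{-1}}$ really acted trivially, the twist in the statement would be vacuous and the glued group scheme would not act on parahoric Higgs bundles in the way Theorem \ref{gen fiber} requires.

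The isomorphism you need is nevertheless available, and more cheaply than you propose. Over $\BD_{x}^{\times}$ the map $\pi$ is an \'etale $\mu_r$-torsor, and the relation $\eta(\zeta z)=\rho(\zeta)\eta(z)$ shows that $\Ad_{\eta^{-1}}$ intertwines the $\rho$-twisted $\mu_r$-equivariant structure on $\sJ_{a,\BD_{x'}}|_{\BD_{x'}^{\times}}\cong Z_{G}(\theta)$ with the canonical pullback equivariant structure on $Z_{G}(\theta^{\eta})\cong\pi^{*}\bigl(\sJ_{a,U}|_{\BD_{x}^{\times}}\bigr)$, the last identification coming from the datum $\phi_{x}$ in the description of $a$. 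Galois descent along the torsor then yields $(\pi_{*}\Ad_{\eta^{-1}}\sJ_{a,\BD_{x'}})^{\mu_r}|_{\BD_{x}^{\times}}\simeq\sJ_{a,U}|_{\BD_{x}^{\times}}$; no commutation with $T$ is used. The $(\Res_{\gc}^{\gt}T)^{W}$-embedding plays a different role: it is what guarantees that $\Ad_{\eta^{-1}}\sJ_{a,\BD_{x'}}$ extends over the origin of $\BD_{x'}$, where $\eta$ is not defined, as a group scheme isomorphic to $\sJ_{a,\BD_{x'}}$ --- the point the paper records just before Proposition \ref{prop:monodromic centralizer}. With that substitution your proof goes through and coincides with the intended one.
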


A torsor over $\sJ_{a}$ is always obtained by gluing a torsor over $U$ of $\sJ_{a,U}$ and a $\sJ_{a,\BD_{x'}}$-torsor over $\BD_{x'}/\mu_r$. Let $\Pic\sJ_{a}$ be the stack of $\sJ_{a}$-torsors over $\mathscr{C}$.
%
We now borrow the following lemma which is due to Balaji and Seshadri \cite[Theorem 4.1.6]{BS14}.
\begin{lemma}\label{equiva torsor equiv}
	Let $\sG$ be an affine commutative group scheme over $\BD_{x'}$ endowed with a $\mu_r$-action. Then the category of $\sG$-torsors over $\BD_{x'}/\mu_r$ is equivalent to the category of torsors over the commutative group scheme $(\pi_{*}\sG)^{\mu_r}$ on $\BD_{x}$.
\end{lemma}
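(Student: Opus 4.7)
The plan is to construct explicit quasi-inverse functors $\Phi$ and $\Psi$ between the two categories and reduce all verifications to the case of a single trivialization. Recall that a $\sG$-torsor on the quotient stack $\BD_{x'}/\mu_r$ is the same as a $\mu_r$-equivariant $\sG$-torsor on $\BD_{x'}$, where the equivariance is compatible with the given $\mu_r$-action on $\sG$.

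First, I would define
\[
\Phi(\sE) := (\pi_{*}\sE)^{\mu_r},
\]
sending a $\mu_r$-equivariant $\sG$-torsor $\sE$ on $\BD_{x'}$ to its sheaf of $\mu_r$-invariant sections on $\BD_{x}$. The $\sG$-action on $\sE$ combined with compatibility of the two $\mu_r$-actions makes $\Phi(\sE)$ a module under $(\pi_{*}\sG)^{\mu_r}$. In the opposite direction I would set
\[
\Psi(\sF) := \pi^{*}\sF \times^{\pi^{*}(\pi_{*}\sG)^{\mu_r}} \sG,
\]
using the canonical counit morphism $\pi^{*}(\pi_{*}\sG)^{\mu_r} \to \sG$, which is $\mu_r$-equivariant by construction. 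The $\mu_r$-equivariance of $\Psi(\sF)$ is inherited from the canonical $\mu_r$-action on $\pi^{*}\sF$ coming from descent from $\BD_x$.

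The main obstacle will be verifying that $\Phi(\sE)$ is actually a $(\pi_{*}\sG)^{\mu_r}$-torsor, i.e. that it is locally trivial. Since $\BD_{x}=\Spec\sO_{x}$ with $\sO_{x}$ a complete local ring with algebraically closed residue field, it suffices to produce an $\sO_{x}$-point of $\Phi(\sE)$, equivalently, a $\mu_r$-equivariant trivialization $\sE \simeq \sG$ over $\BD_{x'}$. A (non-equivariant) trivialization exists because any $\sG$-torsor over the spectrum of a complete local ring with algebraically closed residue field is trivial in the smooth commutative case (Hensel/Lang); this is sufficient for the intended application, where $\sG = \sJ_{a,\BD_{x'}}$ is smooth by Proposition \ref{prop:monodromic centralizer}. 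The obstruction to making such a trivialization $\mu_r$-equivariant lies in $H^{1}(\mu_r, \sG(\sO'))$, which vanishes because $|\mu_r|=r$ is coprime to $\chr(k)$ and the averaging operator $\tfrac{1}{r}\sum_{\zeta\in\mu_r}\zeta$ kills cocycles; in commutative group-scheme terms, this is an instance of Edixhoven's result already invoked above.

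Granting this, the natural isomorphisms $\Psi\circ\Phi \simeq \mathrm{id}$ and $\Phi\circ\Psi \simeq \mathrm{id}$ can be checked after passing to a trivialization, where both composites reduce to the tautological identifications arising from the definition of $(\pi_{*}\sG)^{\mu_r}$ as $\mu_r$-invariants of $\pi_{*}\sG$ and from the projection formula $\pi^{*}(\pi_{*}\sG)^{\mu_r} \otimes \sG \cong \sG$ along the counit. Because these identifications are canonical and independent of the chosen trivialization, they glue to natural isomorphisms of functors on the whole category, yielding the asserted equivalence.
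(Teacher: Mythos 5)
Your functors $\Phi=(\pi_*(-))^{\mu_r}$ and $\Psi$ are the right ones, and you have correctly isolated the crux: whether every $\mu_r$-equivariant $\sG$-torsor on $\BD_{x'}$ admits a $\mu_r$-equivariant trivialization. But the vanishing you assert there is false. The obstruction group is $H^1(\mu_r,\sG(\sO'))$ with the twisted action, and this is killed by $r$ without being zero: the averaging operator produces a primitive only when multiplication by $r$ is \emph{invertible} on $\sG(\sO')$, which holds for vector groups but fails for tori. Concretely, take $\sG=\mathbb{G}_m$ with the action only on the base; the valuation sequence $1\to(\sO')^\times\to(\sK')^\times\to\BZ\to 0$ together with Hilbert 90 gives $H^1(\mu_r,(\sO')^\times)\cong\BZ/r\BZ$, matching the fact that $\Pic$ of the quotient stack $[\BD_{x'}/\mu_r]$ is $\BZ/r\BZ$ while every $\mathbb{G}_m$-torsor on $\BD_x$ is trivial. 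So if "$\sG$-torsors over $\BD_{x'}/\mu_r$" meant \emph{all} equivariant torsors, the two categories would have different sets of isomorphism classes and the lemma itself would be false. Since the regular centralizers appearing in the application are generically tori, this is not a corner case you can wave away.

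The way out is definitional, not cohomological: throughout this paper (cf.\ Definition \ref{def:orbifoldbundle}) an object "over $\BD_{x'}/\mu_r$" is required to admit a trivialization in which the $\mu_r$-action takes a prescribed standard form, i.e.\ one restricts to the equivariantly trivializable torsors (those of trivial class in $H^1(\mu_r,\sG(\sO'))$). With that reading both categories are connected groupoids, and the equivalence reduces to the tautological identification of automorphism groups $\sG(\sO')^{\mu_r}=\bigl((\pi_*\sG)^{\mu_r}\bigr)(\sO)$, which your trivialized computation already gives; no $H^1$ vanishing is needed or used. This is also how the result is set up in Balaji--Seshadri \cite[Theorem 4.1.6]{BS14}, which is all the paper itself invokes here. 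To repair your write-up, delete the averaging argument, state explicitly that you work with equivariantly trivializable torsors, and note that the nontrivial classes in $H^1(\mu_r,\sG(\sO'))$ are exactly the other local "types," which are excluded by convention.
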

The proof in \cite{BS14} is actually given for an affine group scheme over a smooth curve endowed with a finite group action. The lemma follows from their argument.

\begin{theorem}\label{gen fiber}
	The generic fiber of $	h_{\sF}: \Hig_{\sG_{\sF}}\rightarrow H^{0}(C,\gc\times_{\mathbb{G}_{m}}\omega^{\times}_{C}(D))$ is gerbe banded by $\Pic(\sJ_{a}^{\sF})$. 
\end{theorem}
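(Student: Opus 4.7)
The plan is to reduce the assertion to the analogous statement for the Hitchin map $h_\lambda\colon \Hig_{\mathscr{C}}\rightarrow \sA_{\mathscr{C}}$ over the root stack. By Theorem \ref{Higgs equi} the categories of $\sG_\sF$-Higgs bundles on $C$ and $G$-Higgs bundles on $\mathscr{C}$ of type $\mathsf{C}$ are equivalent, and the equivalence intertwines the Chevalley maps and hence the two Hitchin maps. It therefore suffices to show that for a generic $a\in\sA_{\mathscr{C}}$ the fiber $h_\lambda^{-1}(a)$ is a gerbe banded by $\Pic(\sJ_a)$, and then to identify $\Pic(\sJ_a)$ with $\Pic(\sJ_a^{\sF})$ via the descent of Corollary \ref{struc grp over curve}.

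For the first step I would adapt Ng\^o's argument from \cite{Ngo10}. Over the open locus $\sA_{\mathscr{C}}^{\heartsuit}\subset \sA_{\mathscr{C}}$ of sections whose generic image lies in $\gc^{rs}$, a Kostant section produces a distinguished regular Higgs bundle with characteristic $a$ (the section makes sense on $\mathscr{C}$ because it is preserved by the $\mathbb{G}_m$-action twisting $\g$ into $\omega_{\mathscr{C}}^{\times}\times_{\mathbb{G}_m}\g$, and Proposition \ref{covers of root stack} lets us build it \'etale-locally on $U\sqcup(\sqcup_{x\in D}\BD_{x'})$ with the $\mu_r$-equivariance ensured by the conjugation-by-$\eta$ trick of Proposition \ref{prop:higgsconversion}). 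For any regular pair $(E,\theta)$ in $h_\lambda^{-1}(a)$ the isomorphism $\chi^{*}\sJ\simeq \sI$ over $\g^{reg}$ of Proposition-Definition \ref{Univ cen} identifies the sheaf of automorphisms of $(E,\theta)$ with $\sJ_a$, and twisting yields a $\Pic(\sJ_a)$-action on $h_\lambda^{-1}(a)$. Combined with the Kostant-section base-point this action is simply transitive, so $h_\lambda^{-1}(a)$ becomes a $\Pic(\sJ_a)$-gerbe, exactly as in Ng\^o \cite[\S4.3, \S4.7]{Ngo10}. To guarantee that every $\theta$ with characteristic $a$ is regular everywhere, I restrict to the transversal locus $\sA_{\mathscr{C}}^{\Diamond}\subset\sA_{\mathscr{C}}^{\heartsuit}$ defined by $a^{*}\mathscr{D}$ being a reduced divisor; non-emptiness is proved as in Proposition \ref{Diamond} by checking surjectivity of the evaluation $H^{0}(\mathscr{C},\gc\times_{\mathbb{G}_m}\omega_{\mathscr{C}}^{\times})\to \gc\times_{\mathbb{G}_m}\omega_{\mathscr{C}}^{\times}\otimes \sO_{\mathscr{C}}/m_y^{2}$ at each geometric point $y$, using $2g(C)-2+\deg D>0$ so that $\omega_{\mathscr{C}}$ has enough sections.

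To finish I would compare $\Pic(\sJ_a)$ with $\Pic(\sJ_a^{\sF})$. By the decomposition of Proposition \ref{covers of root stack} a $\sJ_a$-torsor on $\mathscr{C}$ is the same as a $\sJ_{a,U}$-torsor on $U$ together with, for each $x\in D$, a $\mu_r$-equivariant $\sJ_{a,\BD_{x'}}$-torsor on $\BD_{x'}$, glued on the punctured discs. Lemma \ref{equiva torsor equiv} converts the latter into a torsor under $(\pi_{*}\Ad_{\eta^{-1}}\sJ_{a,\BD_{x'}})^{\mu_r}$ on $\BD_x$, and this gluing matches precisely the construction of $\sJ_a^{\sF}$ in Corollary \ref{struc grp over curve}. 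This yields a natural equivalence $\Pic(\sJ_a)\simeq \Pic(\sJ_a^{\sF})$ compatible with the action on fibers under Theorem \ref{Higgs equi}, completing the proof.

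The main obstacle is the local analysis at the ramified points $x\in D$: one must verify that Ng\^o's regularity argument carries over to the orbifold formal disc $\BD_{x'}/\mu_r$, i.e.\ that on the transversal locus every $\mu_r$-equivariant Higgs field is everywhere regular, and that the Kostant section can be chosen $\mu_r$-equivariantly. The conjugation-by-$\eta$ mechanism of Theorem \ref{proposition:local equivalence of higgs} makes the equivariance transparent and translates the question back to ordinary regularity of the image Higgs field on $\BD_x$; once the cameral cover $\tilde{\mathscr{C}}_a$ is smooth and generically a $W$-torsor (which holds on $\sA_{\mathscr{C}}^{\Diamond}$), the Ng\^o/Donagi--Gaitsgory proof then applies verbatim.
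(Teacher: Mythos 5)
Your proposal follows essentially the same route as the paper's proof: reduce to the root stack via Theorem \ref{Higgs equi}, treat the open part $U$ by the Donagi--Gaitsgory/Ng\^o argument, and handle the orbifold discs by the conjugation-by-$\eta$ twist together with Lemma \ref{equiva torsor equiv} and Corollary \ref{struc grp over curve}. The paper is terser (it cites \cite[Corollary 17.6]{DG02} over $U$ and does not invoke a Kostant section for a base point), but the strategy and the key ingredients are the same.
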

\begin{proof}
	Since we consider generic fibers, we may assume that $a(\BD_{x'})\subset\gc^{rs}$ for $x\in D$.
	By \cite[Corollary 17.6]{DG02}, we know that a Higgs bundle $(E|_{U},\theta|_{U})$ over $U=\ssC\ssm p^{-1}(D)$ is regular or not depends on the cameral cover $\tilde{U}$ (see \ref{def:cam cover}) which only depends $\chi(\theta|_{U})$. This implies that the generic fiber of Hitchin maps over $U$ is a gerbe banded by $\Pic\sJ_{a,U}$. Since over $\BD_{x'}$, we assume $a(\BD_{x'})\subset\gc^{rs}$(\footnote{As we can see, it is enough to assume the transversality of $a|_{\BD_{x'}}$} as in Proposition \ref{Diamond}), elements of generic fibers can be described as torsors over $\sJ_{a,\BD_{x'}}$ equipped with $\mu_r$ action. 
	
	In conclusion, we show that the generic fiber of the map:
	\begin{equation*}
	\Hig_{\mathscr{C}}\rightarrow \sH_{\sF}:=H^{0}(\mathscr{C},\omega^{\times}_{\mathscr{C}}\times_{\mathbb{G}_{m}}\gc)
	\end{equation*}
	is a gerbe banded by the Picard stack $\Pic\sJ_{a}$. In the proof of Proposition \ref{proposition:local equivalence of higgs}, to give the equivalence, we use conjugation of $\eta$. To be more precise:
	\begin{equation*}
	Ad_{\eta^{-1}}:\Aut^{\mathsf{C}(x)}(E_{x'})\rightarrow \hat{P}_{F(x)}
	\end{equation*} 
	is an isomorphism for each $x\in D$. Then we need to twist the group scheme $a(\BD_{x'})^{*}\sJ$ by $Ad_{\eta^{-1}}$ for $x\in D$. 
	The theorem then follows from Lemma \ref{equiva torsor equiv} and Lemma \ref{struc grp over curve}.    	
\end{proof}

As an application of the power of this orbifold approach, we show that in the case of $G=\SL_n$, our main result recovers in a much simpler manner one of the central results of  \cite{SWW19}.
\begin{corollary}\label{cor: app to SLn}
	If $G=\SL_{n}$, then the  generic geometric  fiber of a parabolic Hitchin map is the Picard variety of its normalized spectral curve.
\end{corollary}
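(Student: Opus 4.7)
My plan is to combine the stacky equivalence of Theorem \ref{Higgs equi} with the Beauville--Narasimhan--Ramanan spectral correspondence transported to the root stack $\mathscr{C}$. First, for $G=\SL_n$, Theorem \ref{Higgs equi} converts a $\sG_\sF$-Higgs bundle on $C$ into an $\SL_n$-Higgs bundle $(E,\theta)$ on $\mathscr{C}$: a rank $n$ vector bundle of trivial determinant equipped with a trace-free section $\theta\in H^0(\mathscr{C},\uHom(E,E)\otimes\omega_\mathscr{C})$. Its characteristic polynomial cuts out a spectral curve $\tilde{\mathscr{C}}_a\subset\Tot(\omega_\mathscr{C})$, and the root-stack version of Proposition \ref{Diamond} shows that $\tilde{\mathscr{C}}_a$ is a smooth Deligne--Mumford curve over a non-empty open subset of $\sA_\mathscr{C}$.

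Second, I apply BNR over $\mathscr{C}$: for generic $a$, the datum $(E,\theta)$ is equivalent to a line bundle $\sM$ on $\tilde{\mathscr{C}}_a$ satisfying $\Nm(\sM)\cong\sO_\mathscr{C}$. A direct computation for $\mathfrak{sl}_n$ identifies the universal regular centralizer $\sJ$ with the kernel of the norm map from the cameral cover to $\gc$, so that Corollary \ref{struc grp over curve} identifies $\sJ_a^\sF$ with the norm-one part of the pushforward $\pi_*\sO_{\tilde{\mathscr{C}}_a}^\times$. Combining this with Theorem \ref{gen fiber} identifies the generic fiber as a torsor under $\ker(\Nm\colon\Pic(\tilde{\mathscr{C}}_a)\to\Pic(\mathscr{C}))$.

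The last step is a geometric identification: I would show that the coarse moduli $|\tilde{\mathscr{C}}_a|$ coincides with the normalization $\bar{C}_a^{\nu}$ of the naive spectral curve $\bar{C}_a\subset\Tot(\omega_C(D))$ defined by $a$ viewed as an element of $H^0(C,\gc\otimes\omega_C(D))$. The argument is local near each $x\in D$: on $\BD_{x'}$ with uniformizer $z=t^{1/r}$, the conjugation by $\eta=\lambda_o^a(z)$ from Proposition \ref{prop:higgsconversion} rewrites the eigenvalues of $\theta$ as regular functions of $z$ that are pairwise distinct for generic $a$, so that $\tilde{\mathscr{C}}_a$ locally splits into a disjoint union of smooth branches, while the branches of $\bar{C}_a$ meet tangentially at $x$. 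Since passing to the coarse moduli separates these branches, this yields $|\tilde{\mathscr{C}}_a|\cong\bar{C}_a^{\nu}$, and the norm-one Picard of $\tilde{\mathscr{C}}_a$ becomes the corresponding subvariety of $\Pic(\bar{C}_a^{\nu})$. The main obstacle is precisely this local identification---specifically, verifying that the $\mu_r$-twisting data on line bundles over $\tilde{\mathscr{C}}_a$ is exactly what the normalization absorbs---which should reduce to a weight computation involving $\lambda_o$ and the local root subsystem, in the spirit of Proposition \ref{prop:bundleconversion}.
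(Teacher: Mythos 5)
Your overall strategy is sound but it is a genuinely different route from the paper's, and the step you flag as ``the main obstacle'' is precisely the step the paper's argument is engineered to avoid. The paper does not pass through a spectral correspondence on $\mathscr{C}$ at all. Instead, starting from Theorem \ref{gen fiber}, it identifies the local factor of the band group scheme purely group-theoretically: by \cite{Ngo10} the regular centralizer $\sJ_{a,\BD_{x'}}$ is a N\'eron model of finite type of its generic fiber, Lemma \ref{fin Neron push forward} shows that taking $(\pi_*(-))^{\mu_r}$ preserves the property of being a finite-type N\'eron model, and the characterization of finite-type N\'eron models in \cite[(3.6)]{CY01} then identifies $(\pi_{*}\sJ_{a,\BD_{x'}})^{\mu_r}$ with $\tilde{p}_*\sO_{\tilde{X}_a}^{\times}$ for the \emph{normalized} spectral curve. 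Uniqueness of the N\'eron model does all the work; no local analysis of branches, eigenvalue expansions, or coarse moduli of the stacky spectral curve is needed. Your plan, by contrast, is essentially the strategy of \cite{SWW19} transported to the root stack: BNR over $\mathscr{C}$, then the identification $|\tilde{\mathscr{C}}_a|\cong\bar{C}_a^{\nu}$. That identification is exactly the ``careful study of the resolution of singularities of spectral curves'' that the paper explicitly says it is replacing, so leaving it as a to-be-done weight computation means your proposal omits the technical heart of the argument. (Your description of the downstairs branches as ``meeting tangentially'' is also not accurate in general --- depending on $\lambda(x)$ the local spectral curve can acquire cusps and higher tacnodes, which is why the general local computation in \cite{SWW19} is delicate.)

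One further point worth noting: your more careful bookkeeping of the trace-free/determinant-one condition leads you to the Prym, i.e.\ $\ker\bigl(\Nm\colon\Pic(\tilde{\mathscr{C}}_a)\to\Pic(\mathscr{C})\bigr)$, whereas the statement and the paper's proof work with the full $p_*\sO_{X_a}^{\times}$ and conclude with the full Picard variety of $\tilde{X}_a$. For $G=\SL_n$ the band really is the norm-one subgroup, so your conclusion is the more precise one; the paper's phrasing (and its proof, which silently uses the $\GL_n$ centralizer) is loose on this point since it is aimed at recovering the $\GL_n$ result of \cite{SWW19}. If you pursue your route you should keep the norm-one condition throughout, but be aware that what you prove will then differ in this respect from the statement as written.
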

\begin{proof}
	We sketch the proof:
	Let $a\in \sH_{\sF}$, the parabolic Hitchin base space. 
	In \cite{Ngo10}, it is shown that in our case the $\sJ_{a}$ over $\BD_{x'}$ is a finite type N\'eron model of its generic fiber. We need to show that $(\pi_{*}\sJ_{a,\BD_{x'}})^{\mu_r}$ is a finite type N\'eron model of it generic fiber. This is shown in Lemma \ref{fin Neron push forward} below.
	
	If $p:X_{a}\rightarrow X$ is a spectral curve, then the generic fiber of $(\pi_{*}\sJ_{a,\BD_{x'}})^{\mu_r}$ is $p_{*}\sO_{X_{a}}^{\times}$. If  
	$\tilde {p}:\tilde {X}_{a}\rightarrow X$ is the \emph{normalized} spectral curve, then it is known that the finite type N\'eron model is $\tilde{p}_{*}\sO_{\tilde X_{a}}$ for example see \cite[(3.6)]{CY01}. Thus we proved that $\Pic(\tilde{X}_{a})$ acting simply and transitively on the fiber of parabolic Hitchin maps over $a$.
\end{proof}

%
\begin{lemma}\label{fin Neron push forward}
	Let $\sO$, $\sK$, $\sO'$, $\sK'$, $\mu_r$ be as before. Let $T_{\sK'}$ be a torus over $\sK'$ , and $T$\ resp. $T^{ft}$ is its  N\'eron model over $\sO'$\ resp.  N\'eron model of finite \emph{type}. Then $(\pi_{*}T)^{\mu_r}$ \ resp. $(\pi_{*}T^{ft})^{\mu_r}$ is a  N\'eron model \ resp.  N\'eron model of finite \emph{type} of its generic fiber. 
\end{lemma}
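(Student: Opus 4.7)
My plan is to verify the Néron mapping property directly for $(\pi_{*}T)^{\mu_r}$ (and similarly for $(\pi_{*}T^{ft})^{\mu_r}$), exploiting the adjunction between Weil restriction and base change, together with uniqueness of Néron extensions to propagate $\mu_r$-equivariance for free. First I would establish smoothness: Weil restriction along the finite flat map $\pi : \Spec \sO' \to \Spec \sO$ preserves smoothness, so $\pi_{*}T$ and $\pi_{*}T^{ft}$ are smooth $\sO$-group schemes. Since $r$ is coprime to $\chr(k)$, the $\mu_r$-fixed subscheme is smooth by the same Edixhoven result \cite[Proposition 3.5]{Ed92} already invoked for Proposition \ref{prop:monodromic centralizer}.

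Next I would verify the universal property. Given a smooth $\sO$-scheme $Y$ and a $\sK$-morphism $\phi_\sK : Y_\sK \to ((\pi_{*}T)^{\mu_r})_\sK$, the adjunction
\begin{equation*}
\Hom_{\sO}(Y, \pi_{*}T) = \Hom_{\sO'}(Y \times_{\sO} \sO', T)
\end{equation*}
converts $\phi_\sK$ into a $\mu_r$-equivariant $\sK'$-morphism $\psi_{\sK'} : Y_{\sK'} \to T_{\sK'}$, where $\mu_r$ acts on $Y_{\sK'}$ via its action on $\sK'$. Because $Y \times_\sO \sO'$ is smooth over $\sO'$ by base change, the Néron mapping property of $T$ over $\sO'$ yields a unique extension $\psi : Y \times_\sO \sO' \to T$. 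For each $\zeta \in \mu_r$, both $\zeta \circ \psi$ and $\psi \circ \zeta$ extend $\psi_{\sK'}$, so by uniqueness they coincide; hence $\psi$ is automatically $\mu_r$-equivariant, and reapplying adjunction produces the desired extension $Y \to (\pi_{*}T)^{\mu_r}$. The same argument with $T$ replaced by $T^{ft}$ and $Y$ of finite type handles the finite-type case, and $(\pi_{*}T^{ft})^{\mu_r}$ is itself of finite type over $\sO$ since $\sO'/\sO$ is.

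The main obstacle I anticipate is the finite-type refinement: one must confirm that the extension through the full Néron model actually factors through $T^{ft}$ whenever $Y$ is of finite type. This should follow from the characterization of $T^{ft}$ as the maximal finite-type open subgroup of $T$, combined with the fact that the image of the quasi-compact scheme $Y \times_\sO \sO'$ lies in a quasi-compact open, which must be contained in $T^{ft}$. A secondary check is that the generic fiber of $(\pi_{*}T)^{\mu_r}$ genuinely equals the $\mu_r$-invariants of $\Res_{\sK'/\sK} T_{\sK'}$; this holds because the order of $\mu_r$ is invertible, so formation of invariants commutes with the flat base change $\sO \to \sK$.
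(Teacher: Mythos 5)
Your treatment of the lft part, i.e.\ that $(\pi_{*}T)^{\mu_r}$ is a N\'eron model, is correct and takes a genuinely different route from the paper: you verify the N\'eron mapping property for arbitrary smooth $Y/\sO$ via the adjunction $\Hom_{\sO}(Y,\pi_{*}T)=\Hom_{\sO'}(Y\times_{\sO}\sO',T)$ and let uniqueness of N\'eron extensions force $\mu_r$-equivariance, whereas the paper exploits that $\sO$ is strictly Henselian with algebraically closed residue field to reduce everything to the single equality of point sets $(\pi_{*}T)^{\mu_r}(\sO)=T(\sO')^{\mu_r}=T(\sK')^{\mu_r}=(\pi_{*}T)^{\mu_r}(\sK)$. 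Your version is more robust (it does not invoke the ``check on $R^{sh}$-points'' criterion) at the cost of being longer; both are fine here.

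The finite-type half, however, has a genuine gap. The finite-type N\'eron model $T^{ft}$ of a torus does \emph{not} satisfy the N\'eron mapping property --- already for $Y=\Spec\sO'$, a $\sK'$-point of $T_{\sK'}$ outside the maximal bounded subgroup extends to $T$ but not to $T^{ft}$ --- so ``verifying the universal property with $T$ replaced by $T^{ft}$'' is not the statement that needs proving; what characterizes $T^{ft}$ (this is the characterization from \cite[\S 3]{CY01} that the paper uses) is that $T^{ft}$ is smooth of finite type with generic fiber $T_{\sK'}$ and $T^{ft}(\sO')$ is the maximal \emph{bounded} subgroup of $T(\sK')$. Your proposed patch also fails on its own terms: it is not true that every quasi-compact open of $T$ meeting the image of $Y\times_{\sO}\sO'$ is contained in $T^{ft}$. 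For $T_{\sK'}=\mathbb{G}_{m,\sK'}$ the lft-N\'eron model has component group $\BZ$, $T^{ft}=\mathbb{G}_{m,\sO'}$ is the identity component, and any other component is a quasi-compact open not contained in $T^{ft}$ which is hit by an $\sO'$-point of $T$ (e.g.\ the one corresponding to the uniformizer $z\in T(\sK')=T(\sO')$). The correct argument is the paper's: $(\pi_{*}T^{ft})^{\mu_r}(\sO)=T^{ft}(\sO')^{\mu_r}$, and since $T^{ft}(\sO')$ is the maximal bounded subgroup of $T(\sK')$ and a subgroup of $T(\sK')^{\mu_r}$ is bounded there iff it is bounded in $T(\sK')$, this intersection is exactly the maximal bounded subgroup of $T(\sK')^{\mu_r}=((\pi_{*}T)^{\mu_r})(\sK)$; one then concludes by the cited characterization.
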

\begin{proof}
	As $r$ is relatively prime to $p$, both $(\pi_{*}T)^{\mu_r}$ and $(\pi_{*}T^{ft})^{\mu_r}$ are smooth affine group scheme over $\sO$. 
	We first prove that $(\pi_{*}T)^{\mu_r}$ is a N\'eron model. As $\sO$ is strict Henselian, we only need to check $(\pi_{*}T)^{\mu_r}(\sO)=(\pi_{*}T)^{\mu_r}(\sK)$. The left hand side is $T(\sO')^{\mu_r}$. Since $T$ is a N\'eron model, $T(\sO')^{\mu_r}=T(\sK')^{\mu_r}$ which is the right hand side.
	
	We know that $(\pi_{*}T^{ft})^{\mu_r}(\sO)=(T^{ft}(\sO'))^{\mu_r}$ is the maximal bounded subgroup of $$((\pi_{*}T)(\sK))^{\mu_r}=(T(\sK'))^{\mu_r}$$
	Then by the characterization of N\'eron model of finite type in \cite[\S 3]{CY01}, we are done.
\end{proof}

\bibliographystyle{alpha}
\bibliography{ref}
\end{document}